%%%%%%%%%%%%%%%%%%%%%%% file template.tex %%%%%%%%%%%%%%%%%%%%%%%%%
%
% This is a general template file for the LaTeX package SVJour3
% for Springer journals.          Springer Heidelberg 2010/09/16
%
% Copy it to a new file with a new name and use it as the basis
% for your article. Delete % signs as needed.
%
% This template includes a few options for different layouts and
% content for various journals. Please consult a previous issue of
% your journal as needed.
%
%%%%%%%%%%%%%%%%%%%%%%%%%%%%%%%%%%%%%%%%%%%%%%%%%%%%%%%%%%%%%%%%%%%
%
% First comes an example EPS file -- just ignore it and
% proceed on the \documentclass line
% your LaTeX will extract the file if required
% [arxiv_v2: filecontents example.eps stripped, 188 chars]
\RequirePackage{fix-cm}
\documentclass{svjour3}                     % onecolumn (standard format)
\smartqed  % flush right qed marks, e.g. at end of proof
\makeatletter
\def\cl@chapter{\@elt {theorem}}
\makeatother

\usepackage{lipsum}
\usepackage{amsfonts}
\usepackage{graphicx}
\usepackage{epstopdf}
\usepackage{algorithmic}
\usepackage{algorithm}
\usepackage{thmtools}
\usepackage{amsmath}
\usepackage{hyperref}
\usepackage{cleveref}
\Crefname{prop}{Proposition}{Propositions}
\Crefname{thm}{Theorem}{Theorems}
\Crefname{assumption}{Assumption}{Assumptions}
\Crefname{lem}{Lemma}{Lemmas}

\usepackage{csvsimple}
\usepackage{booktabs}
\usepackage{siunitx}
\usepackage{numprint}
\usepackage{amsopn}
\usepackage{amssymb}
\renewenvironment{proof}{{\itshape Proof.}}{\qed}

\ifpdf
  \DeclareGraphicsExtensions{.eps,.pdf,.png,.jpg}
\else
  \DeclareGraphicsExtensions{.eps}
\fi

\usepackage{titlesec}
\setcounter{secnumdepth}{4}
\titleformat{\paragraph}[runin]
{\normalfont\normalsize\bfseries}{\theparagraph}{1em}{}
\titlespacing*{\paragraph}
{18pt}{0.35ex plus 1ex minus .2ex}{1.ex plus .2ex}

\titleformat{\section}[runin]{\normalfont\normalsize\bfseries}{\thesection.}{1em}{}[.]
\titleformat{\subsection}[runin]{\normalfont\normalsize\bfseries}{\thesubsection.}{1em}{}[.]
\titleformat{\subsubsection}[runin]{\normalfont\normalsize\bfseries}{\thesubsubsection.}{1em}{}[.]
\titleformat{\paragraph}[runin]{\normalfont\normalsize\bfseries}{\theparagraph.}{1em}{}[.]

%\titleformat{\section}[runin]{\normalfont\normalsize\bfseries}{\thesection.}{1em}{}[.\vspace{0.5em}]
%\titleformat{\subsection}[runin]{\normalfont\normalsize\bfseries}{\thesubsection.}{1em}{}[.\vspace{0.5em}]
%\titleformat{\subsubsection}[runin]{\normalfont\normalsize\bfseries}{\thesubsubsection.}{0.5em}{}[.\vspace{0.5em}]
%\titleformat{\paragraph}[runin]{\normalfont\normalsize\bfseries}{\theparagraph.}{0.5em}{}[]

\usepackage{ifthen}
\newboolean{longversion}
%\setboolean{longversion}{true}
\setboolean{longversion}{true}

% Add a serial/Oxford comma by default.

% Used for creating new theorem and remark environments

\newcommand{\R}{\mathbb{R}}
\newcommand{\N}{\mathbb{N}}

\newcommand{\E}{\mathbb{E}}

\newcommand{\domfi}{X_i}
\newcommand{\ones}{\mathbf{1}}
\newcommand{\zeros}{\mathbf{0}}
\newcommand{\wstar}{w^*}

\newcommand{\vstar}{v^*}
\newcommand{\diam}{D}
\newcommand{\diamC}{D_{\mathcal{C}}}
\newcommand{\AK}{\mathcal{A}_K}
\newcommand{\diamAk}{D_{\AK}}

\newcommand{\facialdistance}{\omega}
\newcommand{\FWLinearRateConstant}{\delta}
\newcommand{\ErrA}{\text{Err}_A}
\newcommand{\Errf}{\text{Err}_f}
\newcommand{\perturb}{\theta}

\newcommand{\conv}{\operatorname{conv}}
\newcommand{\po}{\operatorname{Po}}

\newcommand{\cl}{\operatorname{cl}}
\newcommand{\epi}{\operatorname{epi}}
\newcommand{\dom}{\operatorname{dom}}
\newcommand{\ext}{\operatorname{ext}}

\DeclareMathOperator*{\argmax}{arg\,max}
\DeclareMathOperator*{\argmin}{arg\,min}
\usepackage{mathtools}
%$x = \sum_{j=1}^{p+1} \lambda_j v_j\DeclarePairedDelimiter\abs{\lvert}{\rvert}%
%\DeclarePairedDelimiter\norm{\lVert}{\rVert}
\newcommand{\norm}[1]{\left\lVert#1\right\rVert}
\newcommand{\abs}[1]{\left\lvert#1\right\rvert}

\newtheorem{assumption}{Assumption}[section]
\numberwithin{assumption}{section}
\newtheorem{thm}{Theorem}[section]
\numberwithin{thm}{section}
\newtheorem{prop}{Proposition}[section]
\numberwithin{prop}{section}
\newtheorem{lem}{Lemma}[section]
\numberwithin{lem}{section}
\newtheorem{cor}{Corollary}[section]
\numberwithin{cor}{section}

%
% \usepackage{mathptmx}      % use Times fonts if available on your TeX system
%
% insert here the call for the packages your document requires
%\usepackage{latexsym}
% etc.
%
% please place your own definitions here and don't use \def but
% \newcommand{}{}
%
% Insert the name of "your journal" with
% \journalname{myjournal}
%
\begin{document}

\title{Frank-Wolfe meets Shapley-Folkman: a systematic approach for solving nonconvex separable problems with linear constraints%\thanks{Grants or other notes
%about the article that should go on the front page should be
%placed here. General acknowledgments should be placed at the end of the article.}
}

%\subtitle{Do you have a subtitle?\\ If so, write it here}

\titlerunning{Nonconvex optimization via Frank-Wolfe \& Carath\'eodory}        % if too long for running head

\author{Benjamin Dubois-Taine \and Alexandre~d'Aspremont %etc.
}

%\authorrunning{Short form of author list} % if too long for running head

\institute{Benjamin Dubois-Taine \at
              DI ENS, Ecole Normale sup\'erieure, Universit\'e PSL, CNRS, INRIA,
Paris, 75005, France  \\
              \email{benjamin.paul-dubois-taine@inria.fr}           %  \\
%             \emph{Present address:} of F. Author  %  if needed
           \and
           Alexandre d'Aspremont \at
              DI ENS, Ecole Normale sup\'erieure, Universit\'e PSL, CNRS, INRIA,
Paris, 75005, France  \\
              \email{aspremon@ens.fr} 
}

\date{Received: date / Accepted: date}
% The correct dates will be entered by the editor

\maketitle

\begin{abstract}
We consider separable nonconvex optimization problems under affine constraints. For these problems, the Shapley-Folkman theorem provides an upper bound on the duality gap as a function of the nonconvexity of the objective functions, but does not provide a systematic way to construct primal solutions satisfying that bound. In this work, we develop a two-stage approach to do so. The first stage approximates the optimal dual value with a large set of primal feasible solutions. In the second stage, this set is trimmed down to a primal solution by computing (approximate) Carath\'eodory representations. The main computational requirement of our method is tractability of the Fenchel conjugates of the component functions and their (sub)gradients. When the function domains are convex, the method recovers the classical duality gap bounds obtained via Shapley-Folkman. When the function domains are nonconvex, the method also recovers classical duality gap bounds from the literature, based on a more general notion of nonconvexity.
\keywords{Shapley-Folkman \and Carath\'eodory \and nonconvex optimization \and duality \and Frank-Wolfe}
% \PACS{PACS code1 \and PACS code2 \and more}
\subclass{90C26 \and 90C46 \and 65K05}
\end{abstract}

\section{Introduction}
\label{intro}

We consider nonconvex separable optimization problems of the form
\begin{align}
\tag{P}
    \begin{split}
    \mbox{minimize} & \ \sum_{i=1}^n f_i(x_i)\\
    \mbox{subject to }& \ \sum_{i=1}^n A_ix_i  \leq b,\\
    &x_i \in \domfi, \ i=1, \dots, n,
    \end{split}
    \label{eq:primal}
\end{align}
in the variables $x_i \in \R^{d_i}$, with $A_i \in \R^{m \times d_i}$, $b \in \R^m$. We make the following assumption on the structure of the functions considered.
\begin{assumption}
\label{ass:assumption1}
    For all $i \in \{1, \dots, n\}$, $f_i$ is proper, lower semicontinuous and has an affine minorant. Moreover $\domfi = \dom f_i$ is compact.
\end{assumption}
Note that we do not assume convexity of either the functions $f_i$ or their domains $\domfi$. We also assume that one can efficiently compute the conjugates of the functions $f_i$ and their gradients (or subgradients). We let $d = \sum_{i=1}^n d_i$ and $A = [A_1 \dots A_n]\in \R^{m \times d}$ so that the constraints can be expressed in the more compact form $A x \leq b$, where $x = [x_1^\top x_2^\top \dots x_n^\top]^\top$.

 \subsection{Motivation}
\label{sec:motivation}
Real-world problems with the structure of~\eqref{eq:primal} arise, for example, in power system management~\cite{bertsekas1983optimal,vujanic2016decomposition}, supply chain optimization~\cite{vujanic2014large}, resource allocation~\cite{udell2016bounding}, network utility problems~\cite{bi2016refined} and feature selection~\cite{askari2024naive}. Let us illustrate one such popular application, commonly referred to as the unit commitment problem~\cite{bertsekas1983optimal}. In this problem, an operator manages a network of $n$ plants producing electricity, and must meet a certain electricity demand (given ahead of time) over $m$ time steps, say $D_1, \dots, D_m$. At each time step $t$, the operator can choose how much power $g_i^t$ is produced by a plant $i$. The constraint on the demand can then be written as
\begin{align}
\label{eq:UC-intro-constraints}
    \sum_{i=1}^n g_i^t \geq D_t, \quad t=1, \dots, m.
\end{align}
For each plant $i$, the production profile over $t$ time steps $g_i = [g_i^1, \dots, g_i^t]$ has a certain associated cost $c_i(g_i)$. The goal of the operator is then to minimize the overall cost, i.e.
\begin{align}
\label{eq:UC-intro-objective}
    \mbox{minimize} \ \sum_{i=1}^n c_i(g_i).
\end{align}
Finally, at each time step, a plant $i$ is either turned off, or turned on and producing between a minimum and a maximum capacity. This makes the domain of the objective functions non-convex. We have thus described a non-convex problem whose objective function~\eqref{eq:UC-intro-objective} is separable and whose constraints~\eqref{eq:UC-intro-constraints} are affine, just as in~\eqref{eq:primal}. We will come back in more details to the unit commitment problem in the numerical section of the paper.

\subsection{Background}
 The Shapley-Folkman theorem states that the Minkowski sum of arbitrary sets gets increasingly close to its convex hull as the number of terms in the sum increases. This was first established by Shapley and Folkman in private communications, and first published in~\cite{starr1969quasi}. This result can be used to show that the duality gap of~\eqref{eq:primal} vanishes when the number $n$ of component functions goes to infinity, while the dimension of the constraints $m$ remains bounded~\cite{aubin1976estimates}. More precisely, the duality gap can be bounded by $m \max_i \rho(f_i)$, where $\rho(f_i)$ is a measure of the nonconvexity of the function $f_i$~\cite{aubin1976estimates,bi2016refined,udell2016bounding}. In cases where the domain of the component functions is not convex, the Shapley-Folkman theorem can also be used to prove similar bounds on the duality gap, this time based on more general measures of nonconvexity than the ones mentioned above~\cite{bertsekas2014constrained,vujanic2016decomposition}. Those duality gap bounds are particularly interesting in many real-world applications where $n >> m$. This is often the case for the unit commitment problem described in \Cref{sec:motivation} where $n$, the number of plants producing electricity, is typically much larger than $m$, the number of time steps at which the demand has to be satisfied.

\subsection{Roadmap of the paper and contributions}
The goal of \Cref{sec:preliminaries} is to provide the reader with all the necessary mathematical tools to present duality gap bounds for problems of the form~\eqref{eq:primal}. We emphasize that those bounds are well-known, see for example~\cite{aubin1976estimates,ekeland1999convex}, and are not part of our contributions. They are however central for the main contribution of the paper. These bounds are based on the Shapley-Folkman theorem, which is itself based on the Carath\'eodory decomposition results. After introducing some important definitions, we therefore recall the Carath\'eodory results, in both the convex and conic cases. 

Our first contribution comes at this stage where we adapt the results of \cite{wirth2024frank} to provide and analyze an algorithm which computes the conic Carath\'eodory representation of a given vector. These results allow us to recall a proof of the Shapley-Folkman theorem. We end \Cref{sec:preliminaries} by presenting different (but similar in flavor) duality gap bounds for~\eqref{eq:primal}, notably the central result from Aubin and Ekeland~\cite{aubin1976estimates}.

The bounds on the duality gap presented in \Cref{sec:preliminaries} are not constructive, in the sense that the proofs do not provide a solution meeting those bounds. The main contribution of this work is a systematic approach to obtain solutions of~\eqref{eq:primal} meeting the Shapley Folkman duality gap bounds. We detail this approach in~\Cref{sec:nonconvex-optimization}, and show that it can be broken down into two main stages. 

The first stage, described in~\cref{sec:nonconvex-obtaining-primal-points}, is a conditional gradient method on a smooth convex objective aiming at approximating the optimal dual value. This method converges sublinearly and, crucially here, we show that the linear minimization oracle comes down to computing the Fenchel conjugate of the functions $f_i$, a fairly routine step. The result of the algorithm can be cast as a conic combination of extreme points of the primal feasible set. 

The second stage of the method, described in~\Cref{sec:nonconvex-trimming}, requires trimming this conic combination to a conic Carath\'eodory representation using only $n+m+1$ elements. This can be done using the algorithm derived in \Cref{sec:preliminaries}, justifying our first contribution. When the domains of the functions $\domfi$ are convex, we show in~\Cref{sec:final-reconstruction} that this directly yields a primal feasible solution meeting the classical bounds of~\cite{aubin1976estimates,ekeland1999convex} presented in \Cref{sec:preliminaries}, up to an arbitrarily small additive term. This is summarized in Theorem~\ref{thm:convergence-convex-domain}, which we consider to be the main result of this work. In the case of nonconvex domains, we show that the obtained solution is not feasible on at most $m+1$ of the $n$ component functions. Under various additional assumptions, we show that a final reconstruction step can be carried out to obtain fully feasible solutions, and we derive duality gap bounds similar to the ones above, this time based on a more general notion of nonconvexity~\cite{bertsekas2014constrained,vujanic2014large,vujanic2016decomposition}. The main results for nonconvex domains are stated in Theorem~\ref{thm:convergence-noncconvex-domain1} and Theorem~\ref{thm:convergence-noncconvex-domain2}.

Let us emphasize that the main requirements of the method described in \Cref{sec:nonconvex-optimization} are (i) the computation of the optimal dual value of~\eqref{eq:primal} (ii) being able to compute Fenchel conjugates of the functions $f_i$. These fairly routine computational assumptions make our method versatile and applicable in many real-world settings.

In \Cref{sec:extension-approx-carath}, we discuss a variant of our method described in \Cref{sec:nonconvex-optimization} to solve problem~\eqref{eq:primal}. As mentioned earlier, the second stage of our method requires computing a Carath\'eodory representation of the output of the first stage, and this can be done using the algorithm presented in \Cref{sec:preliminaries}. However, this algorithm has high complexity and can therefore be quite slow on large-scale instances of~\eqref{eq:primal}. Inspired by~\cite{combettes2023revisiting}, we show in \Cref{sec:extension-approx-carath} that one can use conditional gradient methods to obtain an approximate Carath\'eodory representation of the output of the first stage instead of an exact one. Those methods have a fast linear rate of convergence. Moreover, we show that using such a conditional gradient method also reduces the dependency on $m$ in the duality gap bounds of the reconstructed solutions, at the price of an exponentially decreasing error term, which depends on the number of iterations of the conditional gradient method.

Finally, in \Cref{sec:numerical-results}, we implement and test our method on two numerical applications, comparing final performances and running times of computing exact versus approximate Carath\'eodory representations.

\subsection{Related work}

When the underlying domain of the functions are convex,~\cite{udell2016bounding} describes a randomized algorithm achieving the bounds derived from Shapley-Folkman with probability one. To the best of our knowledge, this is the only other work describing a method to build solutions matching these bounds. However, the approach requires knowledge of the full solution space of the convex relaxation of~\eqref{eq:primal}, as opposed to our method which only requires access to the optimal dual value. We give a further comparison of our method with the method of~\cite{udell2016bounding} in \Cref{sec:duality-gap-estimation}, where the tools necessary for a further discussion are introduced.

In the special case of mixed-integer linear programs (MILPs) with unique solution,~\cite{vujanic2014large,vujanic2016decomposition} describe algorithms achieving the bounds for nonconvex domains derived from Shapley-Folkman under various additional assumptions on the domain.

Let us also mention here that the bounds derived from Shapley-Folkman can be slightly improved. In~\cite{bi2016refined}, the authors prove duality gap bounds based on a slightly more refined version of the nonconvexity of a function, known as $k$-th nonconvexity. Although our results immediately extend to this more refined version, we omit them to simplify the exposition.

In~\cite{kerdreux2017approximate}, the authors describe an approach to obtain approximate \newline Carath\'eodory representations with high sampling ratios, which they use to prove approximate Shapley-Folkman theorems. Those, in turn, can be used to prove duality gap bounds which reduce the dependence on the constraint dimension $m$, at the price of an additional term due to the approximation error. The approach is, however, not constructive, and the duality gap bounds provided are on a perturbed version of the primal problem.

Finally, let us briefly mention branch and bound algorithms. While a complete overview is beyond the scope of this work (we refer the reader to~\cite{lawler1966branch,horst2013global,tawarmalani2013convexification} for a thorough exposition), these algorithms form a widely used family of methods for tackling nonconvex problems. We therefore highlight the main differences between these methods and our approach. Branch and bound algorithms aim to find exact solutions of the problem by recursively partitioning the feasible set and computing upper and lower bounds on the optimal function value within each partition. In particular, the design of the partitioning step is a key aspect of these methods, and must often be tailored to the problem at hand. Moreover, computing the bounds also depends on the structure of the problem, and the number of partitions required to obtain a solution can be prohibitively large in high dimension. On the other hand, our method is aimed at finding good approximate solutions in reasonable time, and relies on fairly general assumptions (computation of Fenchel conjugates), making it applicable without further specification in a variety of settings.

\section{Preliminaries} 
\label{sec:preliminaries}
For $k\in \N$, $\Delta_k$ is the simplex of dimension $k-1$. For a set $S$, we write its closure $\cl(S)$, its convex hull $\conv (S)$ and the set of its extreme points $\ext(S)$. For a vector $x \in \R^d$, we define $\norm{x}_+ = \sqrt{\sum_{j=1}^d \max \left( x_j, 0 \right)^2}$.

If $f$ is a function not everywhere $+ \infty$, with an affine lower bound, the (Fenchel) conjugate of $f$ is defined as
\begin{align}
    f^*(y) = \sup_{x \in \dom f} \{ y^\top x - f(x) \}.
\end{align}
The biconjugate of $f$, defined as the conjugate of $f^*$ and written $f^{**}$, is also the pointwise supremum of all affine functions minorizing $f$~\cite[Theorem X.1.3.5]{hiriart1996convex}. In particular, we have $\epi f^{**} = \cl \conv \epi f$. Moreover, when $f$ is lower semicontinuous and 1-coercive, $\conv \epi f$ is closed and hence $\epi f^{**} = \conv \epi f$~\cite[Theorem X.1.5.3]{hiriart1996convex}.

The nonconvexity of a function $f: \R^d \rightarrow \R$ is defined as~\cite{aubin1976estimates}
\begin{align}
    \label{eq:def-nonconvexity-measure}
    \rho(f) = \sup \left\{ f\left(\sum_{j=1}^p \alpha_j x_j \right) - \sum_{j=1}^p \alpha_j f(x_j) \mid p\in \N, x_j \in \dom f, \alpha \in \Delta_p \right\}.
\end{align}
Clearly $\rho(f) = 0$ if $f$ is convex. Note also that in the above definition, we do not require $\sum_{j=1}^p \alpha_j x_j$ to be in $\dom f$. In particular we then have $\rho(f) = \infty$ if $\dom f$ is not convex. Under Assumption~\ref{ass:assumption1}, the nonconvexity $\rho(f)$ can also be expressed in terms of the biconjugate of $f$~\cite[Proposition 6]{bi2016refined}, 
\begin{align*}
    \rho(f) = \sup_{x\in \dom f^{**}} \{f(x) - f^{**}(x)\}.
\end{align*}

\subsection{Dual and convex relaxation}
Following~\cite{kerdreux2017approximate} we define, for $\lambda \in \R^m$, 
\begin{align}
    \label{eq:def-Psi}
    \Psi(\lambda) = \inf_{x_i \in \domfi} \left\{\sum_{i=1}^n f_i(x_i) + \lambda^\top (\sum_{i=1}^n A_i x_i - b)\right\}
\end{align}
The Lagrange dual of~\eqref{eq:primal} is given by
\begin{align}
\tag{D}
    \begin{split}
    \mbox{maximize} & \ \Psi(\lambda) \\
    \text{subject to }& \ \lambda \geq 0.
    \end{split}
    \label{eq:dual}
\end{align}
The bidual of~\eqref{eq:primal}, which is the dual of~\eqref{eq:dual}, is then given by
\begin{align}
\tag{CoP}
\begin{split}
\mbox{minimize} & \ \sum_{i=1}^n f_i^{**}(x_i)\\
\mbox{subject to }& \ Ax  \leq b,\\
&\ x_i \in \conv (\domfi), \quad i=1, \dots, n.
\end{split}
\label{eq:bi-dual}
\end{align}
The fact that the domain is now $\conv (\domfi)$ follows from $\dom f_i^{**} = \conv (\dom f_i)$. From weak duality, the optimal value of~\eqref{eq:dual} is a lower bound on the optimal value of~\eqref{eq:primal}. The following proposition from~\cite{lemarechal2001geometric} gives a sufficient condition for the optimal values of~\eqref{eq:dual} and~\eqref{eq:bi-dual} to match.
\begin{prop}
    Assume $\Psi$ is not everywhere $- \infty$ and there exists a feasible $\hat{x}$ in the interior of $\dom \sum_{i=1}^n f_i^{**}$. Then strong duality holds, i.e. the optimal values of~\eqref{eq:dual} and~\eqref{eq:bi-dual} match.
\end{prop}
From now on, we assume that strong duality holds and let
\begin{align}
\label{eq:def_v_star}
    \vstar := \max_{\lambda \geq 0} \Psi(\lambda).
    \end{align}
    In the rest of the paper, we assume that one can efficiently obtain $\vstar$, by either solving~\eqref{eq:dual} or~\eqref{eq:bi-dual}. This is a relatively fair assumption, as both~\eqref{eq:dual} and~\eqref{eq:bi-dual} are convex problems, for which standard convex optimization solvers can be used.

\subsection{Carath\'eodory}
Carath\'eodory's theorem and its conic version are core ingredients in proving the Shapley-Folkman theorem and the resulting duality gap bounds. We recall both results below, and also present the proof of the conic version, as it will be useful later in the paper. The classical Carath\'eodory theorem states that a vector in the convex hull of a set in $\R^{p}$ can be written as a convex combination of at most $p+1$ points of the original set. 

\begin{thm}[Carath\'eodory]
\label{thm:caratheodory}
    Let $S \subset \R^p$ and $w \in \conv S$. Then there exists $\{\lambda_j\}_{j=1}^{p+1} \subset \R_+$, $\{w_j\}_{j=1}^{p+1} \subset S$ with $\sum_{j=1}^{p+1} \lambda_j = 1$, such that $w = \sum_{j=1}^{p+1} \lambda_j w_j$.
\end{thm}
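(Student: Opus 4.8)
The plan is to prove Carath\'eodory's theorem by a dimension-reduction argument: start from any representation of $w$ as a convex combination of points in $S$, and repeatedly eliminate redundant points until at most $p+1$ remain. First I would invoke the definition of the convex hull to write $w = \sum_{j=1}^{k} \lambda_j w_j$ with $w_j \in S$, $\lambda_j > 0$ (discarding any zero coefficients), and $\sum_{j=1}^k \lambda_j = 1$, for some finite $k$. If $k \leq p+1$ we are already done, so the interesting case is $k > p+1$.

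The key step is to show that whenever $k \geq p+2$, we can reduce the number of terms by one while preserving a valid convex representation of $w$. Since $k-1 \geq p+1$, the $k-1$ difference vectors $w_2 - w_1, \dots, w_k - w_1$ live in $\R^p$ and hence are linearly dependent. This yields scalars $\mu_2, \dots, \mu_k$, not all zero, with $\sum_{j=2}^k \mu_j (w_j - w_1) = 0$. Setting $\mu_1 = -\sum_{j=2}^k \mu_j$, I obtain coefficients $\mu_1, \dots, \mu_k$, not all zero, satisfying both $\sum_{j=1}^k \mu_j w_j = 0$ and $\sum_{j=1}^k \mu_j = 0$. Because the $\mu_j$ sum to zero and are not all zero, at least one is strictly positive.

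The reduction itself is the heart of the argument: for any scalar $t$, the coefficients $\lambda_j - t\mu_j$ still represent $w$ (they sum to one and give back $w$), so I want to choose $t$ to zero out one coefficient while keeping all the rest nonnegative. Taking $t = \min\{\lambda_j/\mu_j : \mu_j > 0\}$, attained at some index $j^\star$, guarantees $\lambda_{j^\star} - t\mu_{j^\star} = 0$ and $\lambda_j - t\mu_j \geq 0$ for every $j$ (this holds automatically when $\mu_j \leq 0$, and follows from the minimality of $t$ when $\mu_j > 0$). Dropping the $j^\star$ term leaves a convex representation of $w$ using $k-1$ points of $S$.

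The main obstacle, if any, is simply the bookkeeping in verifying that the choice of $t$ preserves nonnegativity of all coefficients simultaneously; this is the one place where a sign check must be done carefully. Otherwise the argument is routine. Iterating the reduction strictly decreases $k$ each time, so after finitely many steps we reach $k \leq p+1$, and relabeling the surviving points and coefficients gives the claimed representation $w = \sum_{j=1}^{p+1} \lambda_j w_j$ (padding with zero coefficients if the final count is strictly below $p+1$). \qed
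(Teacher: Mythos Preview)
Your proof is correct and is the standard dimension-reduction argument for Carath\'eodory's theorem. The paper does not actually give its own proof of this statement but simply cites \cite[Theorem 17.1]{rockafellar2015convex}; your argument is precisely the classical one found there, and it also parallels the reduction proof the paper does write out for the conic version (\Cref{thm:conic-caratheodory}) in the appendix, with the extra bookkeeping that the $\mu_j$ sum to zero so that the convexity constraint $\sum_j \lambda_j = 1$ is preserved at each step.
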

\begin{proof}
    See~\cite[Theorem 17.1]{rockafellar2015convex}.
\end{proof}

Let $\po (S) = \left\{ \sum_{j} \lambda_j v_j \mid \lambda_j \geq 0, v_j \in S\right\}$ be the conic hull of $S$. The conic Carath\'eodory theorem states that any point in the conic hull can be written as a conic combination of at most $p$ points in the original set.

%\begin{theorem}[Conic Carath\'eodory]
\begin{restatable}[Conic Carath\'eodory]{thm}{ConicCaratheodory}
\label{thm:conic-caratheodory}
    Let $S \subset \R^p$ and $w \in \po S$. Then there exists $\{\lambda_j\}_{j=1}^{p} \subset \R_+$, $\{w_j\}_{j=1}^{p} \subset S$, such that $w = \sum_{j=1}^{p} \lambda_j w_j$.
\end{restatable}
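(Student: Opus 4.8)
The plan is to start from an arbitrary finite conic representation of $w$ and iteratively delete redundant terms by exploiting linear dependence, exactly as in the classical Carath\'eodory argument, but working directly in $\R^p$ rather than in the lifted space $\R^{p+1}$ used for the affine/convex version.

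First, since $w \in \po S$, by definition there exist finitely many scalars $\lambda_j \geq 0$ and points $v_j \in S$ with $w = \sum_{j=1}^k \lambda_j v_j$. If $k \leq p$ we are already done (one may pad with zero coefficients if a representation with exactly $p$ terms is wanted). Otherwise $k \geq p+1$, and the key observation is that any $k > p$ vectors in $\R^p$ are linearly dependent. Hence there exist scalars $\mu_1, \dots, \mu_k$, not all zero, such that $\sum_{j=1}^k \mu_j v_j = 0$.

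The reduction step is then as follows. Replacing $\mu$ by $-\mu$ if necessary, we may assume at least one $\mu_j > 0$. For $t \geq 0$ consider the perturbed coefficients $\lambda_j(t) = \lambda_j - t \mu_j$; these still satisfy $\sum_{j=1}^k \lambda_j(t) v_j = w$ because $\sum_{j=1}^k \mu_j v_j = 0$. Taking $t^* = \min\{ \lambda_j / \mu_j \mid \mu_j > 0\}$, every $\lambda_j(t^*)$ remains nonnegative, while the index achieving the minimum satisfies $\lambda_j(t^*) = 0$. Dropping that term produces a conic representation of $w$ using at most $k-1$ points of $S$, with all coefficients nonnegative and all points still in $S$.

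Finally, one iterates: each step strictly decreases the number of terms, so after finitely many steps the representation uses at most $p$ points, as claimed. This proof is largely routine, and the only point requiring care — and the sole place where the bound $p$ rather than $p+1$ enters — is that we keep the $v_j$ in $\R^p$ and do not lift to $(v_j, 1) \in \R^{p+1}$ as in the convex case. The conic hull imposes no normalization $\sum_j \lambda_j = 1$, so linear dependence already occurs once $k \geq p+1$, which is exactly what sharpens the conic bound from $p+1$ to $p$.
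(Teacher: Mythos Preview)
Your proof is correct and follows essentially the same route as the paper: start from a finite conic representation, use linear dependence of more than $p$ vectors in $\R^p$ to find a nontrivial relation, and slide the coefficients along that relation until one vanishes, then iterate. The only cosmetic difference is that you first flip the sign of $\mu$ to ensure a positive entry and then take $t^*=\min\{\lambda_j/\mu_j:\mu_j>0\}$, whereas the paper phrases $t^*$ as $\max\{t\ge 0:\lambda_j-t\delta_j\ge 0\ \forall j\}$; these are equivalent once one notes that at least one $\delta_j$ can be taken positive.
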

 \begin{proof}
    Since $w \in \po(S)$, there exists $N \in \N$, $\lambda \in \R^N_+ $ and $w_1, \dots, w_N \in S$ such that $w = \sum_{j=1}^N \lambda_j w_j$. We assume $N > p$ (if $N\leq p$, we are already done). We claim that we can rearrange the coefficients so that one of the $w_j$'s has coefficient $0$. To do so, suppose $\lambda_j > 0$ for all $j=1, \dots, N$ (otherwise we are done). Since $N > p$, the $w_j$'s are linearly dependent, so there exists $\delta_1, \dots, \delta_N$ not all zero such that
    \begin{align*}
        \sum_{j=1}^N \delta_j w_j = 0.
    \end{align*}
    Let $t^* = \max \{ t \geq 0 \mid \lambda_j - t \delta_j \geq 0 \text{ for all $j = 1,\dots, N$}\}$ and set $\lambda_j' = \lambda_j - t^* \delta_j$ for all $j$. Then $\lambda_j' \geq 0$ for all $j$. Moreover, there exists $j_0$ such that $\lambda_{j_0}' = 0$ and
    \begin{align*}
        \sum_{j=1}^N \lambda_j'w_j &= \sum_{j=1}^N \lambda_j w_j - t^* \sum_{j=1}^N \delta_jw_j = w,
    \end{align*}
    which proves that we can indeed reduce the number of strictly positive coefficients to $N-1$. This argument can be repeated for $r$ iterations as long as $N - r > p$, which proves the theorem.
\end{proof}
%\end{theorem}

The remainder of this subsection is dedicated to the design of an algorithm for computing conic Carath\'eodory representations. We consider a vector $\wstar \in \R^p$, and a subset $S \subset \R^p$ of size $N$ which we write $S= \{w_1, \dots, w_N\}$, and we assume that $\wstar \in \po(S)$. In other words, for coefficients $\lambda_1, \dots, \lambda_N > 0$, we have
\begin{align}
    \label{eq:w_definition}
    \wstar = \sum_{j=1}^N \lambda_j w_j.
\end{align}

We assume that the vectors $w_1, \dots, w_N$ and the coefficients $\lambda_1, \dots, \lambda_N$ are known.  Let us give some intuition behind the algorithm to reduce the number of components from $N$ to $p$, which is also the main idea behind the proof of Theorem~\ref{thm:conic-caratheodory}. Since $N > p$, the vectors $w_1, \dots, w_N$ are linearly dependent. Finding the coefficients for which the resulting linear combination is 0 entails solving a linear system in dimension $\R^{p \times N}$. Since this linear system has a trivial solution (namely, setting all coefficients to 0), an additional random row ensures that non-trivial solutions are obtained (with probability one). 

Once the non-trivial solution $\delta$ has been found, a simple algebraic trick shows that one can subtract the (scaled) solution $\delta$ from the original $\lambda$ conic coefficients, and that the resulting conic coefficients have at least one coordinate equal to 0, which we can thus remove from the conic representation. Iterating this procedure will yield a conic representation with precisely $p$ elements. Now, instead of considering all of the vectors $w_1, \dots, w_N$ in the first step above, it is enough to only consider $p+1$ of them, as those are also linearly dependent, and this allows to reduce the dimension of the linear system to be solved at each iteration to $\R^{(p+1) \times (p+1)}$. We summarize the method in \Cref{alg:Constructive-caratheodory}. For a matrix $W$, we write $W_{:, k}$ as its $k$-th column and $W_{k, :}$ as its $k$-th row. Moreover, $W_{:, k_1:k_2}$ is the submatrix of $W$ composed of the $k_2 - k_1 + 1$ columns between $k_1$ and~$k_2$.

\begin{algorithm}
\caption{Exact Carath\'eodory}
\label{alg:Constructive-caratheodory}
\begin{algorithmic}[1]
\STATE{\textbf{Input}: $w_1, \dots, w_N$ and nonnegative coefficients $\lambda_1, \dots, \lambda_N$ such that $\wstar = \sum_{j=1}^N \lambda_j w_j$.}
\STATE{Build $W \in \R^{p \times N}$, the matrix whose columns are the vectors $w_j$.}
 \STATE{Draw vector $u \in \R^{N}$ with uniform distribution on the unit sphere and let $\tilde{W}_u = \begin{pmatrix}
    W \\ u^\top
\end{pmatrix}$. }
\STATE{$\tilde{W}^s_u \leftarrow (\tilde{W}_u)_{:,1: p+1}$}
\STATE{$\alpha \leftarrow \lambda_{1:p+1}$}
\STATE{$c \leftarrow p+ 2 $}
\WHILE{$c \leq N + 1$}
\STATE{Solve $(\tilde{W}^s_u) \delta = \begin{pmatrix}
    \zeros_{p} \\ 1
\end{pmatrix}$}
\STATE{$t^* \leftarrow \min \{\frac{\alpha_j}{\delta_j} \mid \delta_j > 0\}$}
\STATE{$\alpha' \leftarrow \alpha - t^* \delta$}
\STATE{$j^* \leftarrow \argmin_j \alpha_j'$}
\STATE{Remove element at index $j^*$ from $\alpha'$ and column $j^*$ from $\tilde{W}^s_u$}
\IF{$c = N + 1$}
\STATE{$\alpha \leftarrow \alpha'$}
\STATE{\textbf{Break}}
\ELSE
\STATE{$\alpha \leftarrow (\alpha', \lambda_{c})$}
\STATE{$\tilde{W}^s_u \leftarrow \left(\tilde{W}^s_u , (\tilde{W}_u)_{:, c}\right)$}
\STATE{$c \leftarrow c + 1$}
\ENDIF
\ENDWHILE
\STATE{Return $\alpha, (\tilde{W}^s_u)_{1:p,:}$.}
\end{algorithmic}
\end{algorithm}

\begin{thm}
\label{thm:Constructive-caratheodory}
With probability 1, \Cref{alg:Constructive-caratheodory} outputs $\alpha \in \R^{p}_+$ and $p$ vectors $\{w_{j_l}\}_{l=1}^{p} \subset S$, such that
\begin{align}
\label{eq:constructive-caratheodory-output}
    \wstar  = \sum_{l=1}^{p}\alpha_l
        w_{j_l}.
\end{align}
\end{thm}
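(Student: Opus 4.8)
The plan is to combine a loop invariant establishing correctness with a measure-zero argument showing that the random vector $u$ rules out the only possible degeneracy. Write $W^s$ for the top $p$ rows of the current matrix $\tilde{W}^s_u$, whose columns are the working vectors, write $u^s$ for its bottom row, and let $J$ denote the index set of the current columns; I assume $N \ge p+1$, as otherwise there is nothing to reduce. I would first record the invariant holding at the top of the while loop: $J$ has cardinality $p+1$, the coefficients satisfy $\alpha \ge 0$, and $\wstar = \sum_{j\in J}\alpha_j w_j + \sum_{k=c}^N \lambda_k w_k$. The base case $c=p+2$ is immediate, since $\alpha = \lambda_{1:p+1}$ makes the two sums recombine into $\sum_{j=1}^N \lambda_j w_j = \wstar$.

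The inductive step rests on the top $p$ equations of $\tilde{W}^s_u \delta = (\zeros_p,1)^\top$, which read $\sum_{j\in J}\delta_j w_j = 0$. Hence for $\alpha' = \alpha - t^*\delta$ the working-set sum is unchanged, $\sum_{j} \alpha'_j w_j = \sum_{j} \alpha_j w_j$, and appending $(w_c,\lambda_c)$ while advancing $c \to c+1$ merely moves the term $\lambda_c w_c$ from the trailing sum into the working set, so the invariant propagates; on the terminal pass $c=N+1$ the trailing sum is empty. Nonnegativity of $\alpha'$ comes from the choice $t^* = \min\{\alpha_j/\delta_j : \delta_j>0\}$: coordinates with $\delta_j \le 0$ only grow, while the binding coordinate is sent to exactly $0$ and is the one removed at $j^*$. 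Since $\delta \ne 0$ and $\sum_{j}\delta_j w_j = 0$, after possibly negating $\delta$ (which also solves the homogeneous system) it has a positive entry, so $t^*$ is well defined and nonnegative. Each pass deletes one column and, except on the terminal pass, reinserts one, so $|J|$ returns to $p+1$ each time and ends at exactly $p$; combined with the invariant this yields $\wstar = \sum_{l=1}^p \alpha_l w_{j_l}$, as claimed.

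The last thing to verify, and where the probabilistic statement is used, is that every system $\tilde{W}^s_u \delta = (\zeros_p,1)^\top$ is solvable. A solution exists if and only if $u^s$ is not orthogonal to $\ker W^s$; because $p+1$ vectors in $\R^p$ always have a nontrivial kernel, $(\ker W^s)^\perp$ is a proper subspace, so the failure set for $u^s$ is proper. The main obstacle is that the sequence of working sets visited is itself determined by $u$, so one cannot naively union-bound over the realized trajectory. I would instead bound over all contingencies at once: for each of the finitely many index sets $J\subseteq\{1,\dots,N\}$ with $|J|=p+1$, the event that $u_J$ is orthogonal to $\ker W_{:,J}$ confines $u$ to the intersection of the sphere with a proper subspace of $\R^N$, hence has measure zero, and the union over all such $J$ is still null. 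On the complementary event, which has probability one, every system the algorithm could possibly encounter is solvable, so it runs to completion and returns a valid representation, proving the theorem.
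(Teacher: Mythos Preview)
Your argument follows the paper's closely: maintain the invariant $W^s\alpha = \wstar - W_{:,c:N}\lambda_{c:N}$, show each pass preserves it while zeroing one coefficient, and argue the system on line~8 is solvable with probability one. Your union bound over all $\binom{N}{p+1}$ possible working sets is in fact more careful than the paper, which simply asserts solvability ``with probability 1'' at each step without addressing the dependence of the visited index sets on $u$.

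There is one genuine slip. You claim $t^*$ is well defined because ``after possibly negating $\delta$ it has a positive entry'', but the algorithm has no freedom to negate: line~8 fixes $\delta$ via the inhomogeneous system $\tilde{W}^s_u\delta = (\zeros_p,1)^\top$, and $-\delta$ gives $(u^s)^\top(-\delta) = -1 \neq 1$. The $\delta$ actually computed need not have any positive entry. For instance with $p=1$, $w_1=1$, $w_2=-1$, the kernel of $W^s=(1,-1)$ is spanned by $(1,1)^\top$, so $\delta = (1,1)^\top/(u_1+u_2)$, which is entrywise negative whenever $u_1+u_2<0$; then the set $\{j:\delta_j>0\}$ is empty and line~9 is undefined. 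The paper's proof sidesteps this entirely (``one can then check that lines 9--11\dots''), so you are not behind it here, but your attempted justification does not go through as written; this is really a small gap in the algorithm itself rather than in either proof.
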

\begin{proof}
Our proof basically mirrors the one of Theorem~\ref{thm:conic-caratheodory}. By the definition of $W$ in \Cref{alg:Constructive-caratheodory}, we have $W \lambda = \wstar$. Let $W^s$ be the matrix $\tilde{W}^s_u$ from which we have removed the last row (namely the row composed of elements of the random vector $u$). At the beginning of the first iteration, we then have
\begin{align}
    &\ W\lambda = W^s \alpha + (W_{:, c:N})\lambda_{c:N}, \nonumber\\
\label{eq:proof-exact-caratheodory-equality}
    \Rightarrow & \ W^s \alpha = \wstar - (W_{:, c:N})\lambda_{c:N}.
\end{align}
We claim that~\eqref{eq:proof-exact-caratheodory-equality} is true at the beginning of every iteration. We saw that it was true at the beginning of the first iteration, now suppose it is true at the beginning of some iteration.

Since $W^s \in \R^{p \times (p+1)}$, the system $W^s \delta = \zeros_p$ must have a non-trivial solution. Let $\delta^*$ be one such solution. Observe that
\begin{align*}
    \tilde{W}^s_u \frac{\delta^*}{((\tilde{W}^s_u)_{p+1, :})^\top \delta^*} = \begin{pmatrix}
    \zeros_{p} \\ 1
\end{pmatrix},
\end{align*}
and hence a solution of the system on line 8 exists as long as $((\tilde{W}^s_u)_{p+1, :})^\top \delta^* \not = 0$. Since the last row $\tilde{W}^s_u$ has uniform distribution on the unit sphere, this holds with probability 1.

One can then check that lines 9-11 in the algorithm are simple computations to find $\alpha' \geq 0$ such that $W^s \alpha' =W^s \alpha$ and $\alpha_{j^*}' = 0$. Removing the index $j^*$ from $\alpha'$ and the column $j^*$ from $W^s$, we get $\alpha' \in \R^{p}$, $W^{s} \in \R^{p \times p}$ and
\begin{align*}
    W^s \alpha' = \wstar - (W_{:, c:N})\lambda_{c:N}.
\end{align*}
At this point two cases arise. 

\textbf{Case 1}: If $c = N + 1$, it must be that $W^s \alpha' = \wstar$, and we have therefore found a conic combination of $\wstar$ with $p$ coefficients, so we are done. 

\textbf{Case 2}: If $c < N + 1$, we can append $\lambda_c$ to $\alpha'$ and column $c$ of $\tilde{W}$ to $\tilde{W}^s$, as on lines 17 and 18. This results in $\alpha \in \R^{p+1}$ and $W^s \in \R^{p \times (p+1)}$ such that
\begin{align*}
    W^s \alpha = \wstar - (W_{:, c+1:N})\lambda_{c+1:N}.
\end{align*}
Updating $c \leftarrow c+ 1$ as on line 19, and starting a new iteration, we are back to the equation~\eqref{eq:proof-exact-caratheodory-equality}, except that we have reduced the column dimension of the matrix $W_{:, c:N}$ by $1$. We can iterate this argument until case 1 above is true.
\end{proof}

Note that similar algorithms to obtain Carath\'eodory representations have been developed in~\cite{beck2017linearly} and in~\cite{wirth2024frank}. Those approaches are however aimed at reducing the size of the active set of conditional gradient methods, and hence are looking for convex representations via the classic Carath\'eodory result (Theorem~\ref{thm:caratheodory}). In contrast, we are looking for representations in the conic hull of $S$ via the proof of Theorem~\ref{thm:conic-caratheodory}. Our approach is closer to the one of~\cite{wirth2024frank}, in the sense that we make no assumption on how the linear system is solved on line 8. In particular, when the vectors in $S$ are sparse, efficient direct or indirect sparse solvers may be used. In contrast, the approach in~\cite{beck2017linearly} requires maintaining a matrix in row-echelon form which, depending on the application, might not be computationally efficient. Without further assumption on the structure of the linear systems, we suggest a strategy to achieve complexity $O(p^2)$ at each iteration, which is similar to the complexity achieved in~\cite{beck2017linearly} and in~\cite{wirth2024frank}.

First, note that at each iteration, the linear system to be solved is the same as the one at the previous iteration, up to one column deletion and one column insertion. One can therefore compute the QR decomposition of $\tilde{W}^s_u$ on line 4, solve the linear systems on line 8 using the decomposition, and update the QR decomposition by either deleting a column (line 12) or by appending a column (line 18). We give the complexity of this strategy in the next lemma.

\begin{lem}
    \label{lem:exact-carath-flop-count}
    Using the above strategy, the complexity of \Cref{alg:Constructive-caratheodory} is $O(Np^2)$.
\end{lem}
\begin{proof}
    The complexity of computing the first QR decomposition is $O(p^3)$. The complexity of solving the linear system given the QR decomposition is $O(p^2)$, while the complexity of deleting/appending a column is also $O(p^2)$~\cite[Section 6.5.2]{golub2013matrix}. We must do the above two tasks at each iteration, and there are $N - p$ iterations in total. Since $N \geq p$, the complexity is indeed $O(N p^2)$.
\end{proof}

\subsection{The Shapley Folkman theorem}
Both versions of Cara\-th\'eodory's theorem are central to the proof of the Shapley Folkman theorem, which states that the Minkowski sum of arbitrary sets gets increasingly closer to its convex hull as the number of terms grows. Established in private communications between Shapley and Folkman, it was first published in~\cite{starr1969quasi}.
\begin{thm}[Shapley Folkman]
Let $V_{i} \subset \R^d$, $i=1, \dots, n$ and \\$x \in \sum_{i=1}^n \conv (V_i)$. Then for some set $S \subset \{1, \dots, n\}$ with $|S| \leq d$, we have
\begin{align*}
    x \in \sum_{i \in \{1, \dots, n\} \setminus S } V_i + \sum_{i \in S} \conv (V_i)
\end{align*}
\end{thm}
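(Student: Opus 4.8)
The plan is to reduce the statement to the conic Carath\'eodory theorem (\Cref{thm:conic-caratheodory}) via a standard lifting into $\R^{d+n}$ that records, in $n$ auxiliary coordinates, the constraint that each component index $i$ must be ``used.'' First I would invoke the hypothesis $x \in \sum_{i=1}^n \conv(V_i)$ to write $x = \sum_{i=1}^n y_i$ with $y_i \in \conv(V_i)$. By the definition of the convex hull (or by \Cref{thm:caratheodory}, to keep each collection finite and of controlled size), each $y_i$ is a finite convex combination $y_i = \sum_{j} \lambda_{ij} v_{ij}$ with $v_{ij} \in V_i$, $\lambda_{ij} \ge 0$ and $\sum_j \lambda_{ij} = 1$.

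The key step is the lift. Let $e_1, \dots, e_n$ denote the canonical basis of $\R^n$ and set $\hat v_{ij} = (v_{ij}, e_i) \in \R^{d+n}$. Then
\begin{align*}
\sum_{i,j}\lambda_{ij}\hat v_{ij} = \Big(\sum_i y_i, \ \sum_i \big(\textstyle\sum_j \lambda_{ij}\big) e_i\Big) = (x, \ones),
\end{align*}
with $\ones \in \R^n$, so $(x,\ones)$ is a conic combination of the finitely many points $\hat v_{ij}$, i.e. $(x,\ones)\in \po(\{\hat v_{ij}\})$ inside $\R^{d+n}$. Applying \Cref{thm:conic-caratheodory} with $p = d+n$, I can rewrite $(x,\ones) = \sum_{(i,j)\in J}\mu_{ij}\hat v_{ij}$ using an index set $J$ with $|J|\le d+n$ and coefficients $\mu_{ij}>0$ (discarding any vanishing weights).

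The conclusion then follows from reading off the two coordinate blocks. Projecting onto the last $n$ coordinates gives $\ones = \sum_{(i,j)\in J}\mu_{ij}e_i$, so for every $i$ the weights attached to group $i$ satisfy $\sum_{j:(i,j)\in J}\mu_{ij} = 1$; in particular each group $i$ is represented by at least one index in $J$, whence $|J|\ge n$. Setting $n_i = |\{j : (i,j)\in J\}|\ge 1$, we have $\sum_i(n_i-1) = |J|-n \le d$, so the set $S = \{i : n_i \ge 2\}$ satisfies $|S|\le \sum_i (n_i-1)\le d$. Projecting onto the first $d$ coordinates gives $x = \sum_i z_i$ with $z_i = \sum_{j:(i,j)\in J}\mu_{ij}v_{ij}$; since the group weights sum to one, $z_i \in \conv(V_i)$ in general, and for $i\notin S$ the single surviving term has coefficient $1$, so $z_i\in V_i$. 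This is exactly $x \in \sum_{i\notin S}V_i + \sum_{i\in S}\conv(V_i)$ with $|S|\le d$.

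The main obstacle is getting the counting exactly right rather than the mechanics of the lift: one must argue that the augmentation by $e_i$ forces at least one surviving index per group (so that $|J|\ge n$) and then convert the slack $|J|-n\le d$ into the bound on the number of ``genuinely convex'' components. A secondary point to handle with care is that the per-group normalization $\sum_j \mu_{ij} = 1$ is automatic from the extra coordinates, which is precisely what guarantees $z_i\in\conv(V_i)$ and, when only one index survives in a group, the sharper membership $z_i\in V_i$.
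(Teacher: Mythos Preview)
Your proof is correct and follows essentially the same approach as the paper: lift each $v_{ij}$ to $(v_{ij},e_i)\in\R^{d+n}$, apply conic Carath\'eodory (\Cref{thm:conic-caratheodory}) to the conic representation of $(x,\ones_n)$, and use the auxiliary coordinates to force at least one surviving term per group, so that at most $d$ groups end up with a nontrivial convex combination. The only cosmetic difference is that the paper first invokes classical Carath\'eodory to fix $d+1$ terms per group, whereas you allow an arbitrary finite number; this is immaterial to the argument.
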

%\ifthenelse{\boolean{longversion}}{
\begin{proof}
Since $x \in \sum_{i=1}^n \conv(V_i)$, by classic Carath\'edory we have
    \begin{align*}
        x = \sum_{i=1}^n \sum_{j=1}^{d+1} \lambda_{ij} v_{ij}
    \end{align*}
    with $v_{ij } \in V_i$ for all $i$, and $\sum_{j=1}^{d+1} \lambda_{ij} = 1$ for all $i$. We can then write
    \begin{align*}
        \begin{pmatrix}
            x \\ \ones_n 
        \end{pmatrix} &= \sum_{i=1}^n \sum_{j=1}^{d+1} \lambda_{ij} \begin{pmatrix}
            v_{ij} \\ e_i
        \end{pmatrix}.
    \end{align*}
    This is a conic combination in $\R^{n+d}$, so by conic Carath\'eodory, there exists $\mu \in \R^{n(d+1)}_+$ with at most $n+d$ nonzero such that
    \begin{align*}
        \begin{pmatrix}
            x \\ \ones_n 
        \end{pmatrix} &= \sum_{i=1}^n \sum_{j=1}^{d+1} \mu_{ij} \begin{pmatrix}
            v_{ij} \\ e_i
        \end{pmatrix}.
    \end{align*}
    In particular, for each $i$, there must be at least one of the $\mu_{ij}$ not equal to $0$. Since there are only $n+d$ nonzero coefficients, this implies that for at least $n-d$ indices, the convex combination is trivial, i.e. has only one nonzero coefficient, which is equal to one. For the remaining $d$ indices, the convex combination is non-trivial. In other words, for a set $S$ with $|S| \leq d$,
    \begin{align*}
        x \in \sum_{i \not \in S} V_i + \sum_{i \in S} \conv(V_i).
    \end{align*}
\end{proof}
%}{}
\subsection{Duality gap estimation}
\label{sec:duality-gap-estimation}
The Shapley-Folkman theorem established above is one of the main ingredients for proving duality gap bounds for problem~\eqref{eq:primal}. Let us first state an important lemma, which will be useful throughout this section.
\begin{lem}
\label{lem:shapley-folkman-applied-nonconvex-duality}
    Let $\vstar$ be the dual optimal value as defined in~\eqref{eq:def_v_star}. There exists a set $S \subset \{1, \dots, n\}$ of cardinality at most $m+1$ and a vector $\bar{x} \in \R^d$, such that
    \begin{align*}
        &\bar{x}_i \in \domfi \ \text{ for } i \not \in S,\\
        &\bar{x}_i \in \conv \domfi \ \text{ for } i \in S,\\
        &A \bar{x} \leq b,
    \end{align*}
    and
    \begin{align*}
   \vstar = \sum_{i\in S} f_i^{**}(\bar{x}_i) + \sum_{i\not \in S} f_i(\bar{x}_i)
\end{align*}
\end{lem}
\begin{proof}
    Since $\vstar$ is the optimal value of~\eqref{eq:bi-dual}, there exists $\tilde{x} \in \R^d$ with $\tilde{x}_i \in \conv \domfi$ for all $i=1, \dots n$, $A \tilde{x} \leq b$, and $\vstar = \sum_{i=1}^n f_i^{**}(\tilde{x}_i)$. Let
    \begin{align*}
        \tilde{z} = \begin{pmatrix}
            \sum_{i=1}^n f_i^{**}(\tilde{x}_i) \\
            b
        \end{pmatrix}
    \end{align*}
    we have
    \begin{align*}
        \tilde{z} &\in \sum_{i=1}^n \left\{ \begin{pmatrix}
            f_i^{**}(x_i) \\ A_i x_i 
        \end{pmatrix} \mid x_i \in \conv \domfi \right\} + \R^{m+1}_+ \\
        &= \sum_{i=1}^n \conv \left\{ \begin{pmatrix}
            f_i(x_i) \\ A_i x_i 
        \end{pmatrix} \mid x_i \in \domfi \right\} + \R^{m+1}_+ \quad \text{ (Lemma~\ref{lem:equality-epigraph})}.
    \end{align*}
    We can now apply the Shapley-Folkman theorem to show that there exists a set $S$ of cardinality at most $m+1$ such that
    \begin{align*}
        \tilde{z} &\in \sum_{i\not \in S} \left\{ \begin{pmatrix}
            f_i(x_i) \\ A_i x_i 
        \end{pmatrix} \mid x_i \in \domfi \right\} + \sum_{i\in S} \conv \left\{ \begin{pmatrix}
            f_i(x_i) \\ A_i x_i 
        \end{pmatrix} \mid x_i \in \domfi \right\} + \R^{m+1}_+ \\
        &= \sum_{i\not \in S} \left\{ \begin{pmatrix}
            f_i(x_i) \\ A_i x_i 
        \end{pmatrix} \mid x_i \in \domfi \right\} + \sum_{i\in S} \left\{ \begin{pmatrix}
            f_i^{**}(x_i) \\ A_i x_i 
        \end{pmatrix} \mid x_i \in \conv \domfi \right\} + \R^{m+1}_+,
    \end{align*}
    where we applied Lemma~\ref{lem:equality-epigraph} again. In other words, there exists $\bar{x} \in \R^d$, $\bar{x}_i \in \domfi$ for $i \not \in S$ and $\bar{x}_i \in \conv \domfi$ for $i \in S$, and $\delta_1 \geq0, \delta_2 \in \R^m_+$ such that
    \begin{align*}
        \vstar = \sum_{i=1}^n f_i^{**}(\tilde{x}_i) &= \sum_{i\not \in S} f_i(\bar{x}_i) + \sum_{i \in S} f_i^{**}(\bar{x}_i) + \delta_1,\\
        b &= A \bar{x} + \delta_2.
    \end{align*}
    In particular, $A \bar{x} \leq b$, and since $f_i^{**} \leq f_i$, we must actually have $\delta_1 = 0$ and 
    \begin{align*}
        \sum_{i=1}^n f_i^{**}(\tilde{x}_i) = \sum_{i\not \in S} f_i(\bar{x}_i) + \sum_{i \in S} f_i^{**}(\bar{x}_i).
    \end{align*}
\end{proof}

\subsubsection{Convex domain, nonconvex objectives}
When dealing with convex domains $\domfi$, we have $\domfi = \conv(\domfi)$. We now review some results covering duality gap estimation. We first recall the original result of~\cite{aubin1976estimates} (see also~\cite{ekeland1999convex}), giving a bound on the duality gap which depends on the dimension of the constraints $m$ and the nonconvexity of the considered functions, but which is independent of the number of functions $n$.

\begin{restatable}{thm}{NonConvexDualityGap}
\label{thm:nonconvex-duality-gap}
    Let $\vstar$ be the dual optimal value defined in~\eqref{eq:def_v_star} and $\rho(f_i)$ be as defined in~\eqref{eq:def-nonconvexity-measure}. Assume that $\domfi$ is convex for all $i$. There exists $x^* \in \R^d$ such that $A x^* \leq b$ and 
    \begin{align*}
   \vstar \leq  \sum_{i=1}^n f_i(x_i^*) \leq \vstar + (m+1) \max_i \rho(f_i).
\end{align*}
\end{restatable}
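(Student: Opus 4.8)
The plan is to combine strong duality with the Shapley-Folkman theorem applied to suitable epigraph sets lifted into $\R^{m+1}$; the appearance of the factor $m+1$ (rather than a constant depending on the $d_i$) will come precisely from projecting each epigraph through its constraint map $A_i$ \emph{before} invoking Shapley-Folkman, so that the relevant ambient dimension is only $m+1$. First I would use strong duality to replace $\vstar$ by the optimal value of~\eqref{eq:bi-dual}, and argue that this value is attained. Since each $X_i$ is compact, $\conv(X_i)=X_i$ is compact, so the feasible set $\{x : Ax \le b,\ x_i \in X_i\}$ is compact (and nonempty by the strong-duality hypothesis), while $\sum_i f_i^{**}$ is proper and lower semicontinuous. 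Hence a minimizer $\bar{x}$ exists with $\sum_i f_i^{**}(\bar{x}_i) = \vstar$ and $A\bar{x} \le b$.

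Next, for each $i$ I would define $V_i = \{(A_i x_i, t_i) : (x_i,t_i)\in \epi f_i\} \subset \R^{m+1}$, the image of $\epi f_i$ under the linear map $(x_i,t_i)\mapsto (A_i x_i, t_i)$. Linearity gives $\conv V_i$ as the image of $\conv \epi f_i$, and since each $f_i$ has compact domain it is $1$-coercive, so $\conv \epi f_i = \epi f_i^{**}$. Therefore $(A_i \bar{x}_i, f_i^{**}(\bar{x}_i)) \in \conv V_i$ for every $i$, and summing yields $(A\bar{x}, \vstar) \in \sum_i \conv V_i$. Applying the Shapley-Folkman theorem in ambient dimension $m+1$ then produces a set $S$ with $|S|\le m+1$ and a decomposition $(A\bar{x}, \vstar) = \sum_{i\notin S} p_i + \sum_{i\in S} q_i$ with $p_i \in V_i$ and $q_i \in \conv V_i$.

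I would then read off the primal point. For $i\notin S$, write $p_i = (A_i x_i^*, t_i)$ with $x_i^* \in X_i$ and $t_i \ge f_i(x_i^*)$; for $i\in S$, write $q_i = (A_i x_i^*, s_i)$ with $x_i^* \in \conv(X_i)=X_i$ and $s_i \ge f_i^{**}(x_i^*)$. Matching the first $m$ coordinates gives $Ax^* = A\bar{x} \le b$, so $x^*$ is feasible for~\eqref{eq:primal} --- this is exactly where convexity of the domains enters, guaranteeing $x_i^* \in X_i$ and not merely $x_i^* \in \conv(X_i)$. Matching the last coordinate gives $\sum_{i\notin S} t_i + \sum_{i\in S} s_i = \vstar$. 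For the upper bound I bound $f_i(x_i^*)\le t_i$ on $i\notin S$, and on $i\in S$ combine the nonconvexity inequality $f_i(x_i^*) - f_i^{**}(x_i^*) \le \rho(f_i)$ with $f_i^{**}(x_i^*)\le s_i$ to get $f_i(x_i^*)\le s_i + \rho(f_i)$; summing gives $\sum_i f_i(x_i^*) \le \vstar + \sum_{i\in S}\rho(f_i) \le \vstar + (m+1)\max_i \rho(f_i)$. The lower bound $\vstar \le \sum_i f_i(x_i^*)$ is weak duality: for any $\lambda\ge 0$, feasibility $Ax^*\le b$ forces $\lambda^\top(Ax^*-b)\le 0$, so $\sum_i f_i(x_i^*) \ge \Psi(\lambda)$, and taking the supremum over $\lambda\ge 0$ concludes.

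The main obstacle is choosing the right lifting: a naive Shapley-Folkman argument on the epigraphs in the full space would yield a cardinality bound scaling with $\sum_i d_i$, so the crux is collapsing each epigraph through $A_i$ into the common space $\R^{m+1}$ so that $|S|\le m+1$ depends only on the number of constraints. A secondary point to handle carefully is the identity $\conv \epi f_i = \epi f_i^{**}$, which relies on the compact-domain (hence $1$-coercivity) hypothesis to ensure $\conv \epi f_i$ is already closed.
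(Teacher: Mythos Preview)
Your proof is correct and follows essentially the same route as the paper: project each epigraph through $A_i$ into $\R^{m+1}$ and apply Shapley--Folkman there so that $|S|\le m+1$, then bound $f_i-f_i^{**}$ by $\rho(f_i)$ on the at most $m+1$ bad indices. The only cosmetic difference is that the paper works with the graph sets $\{(f_i(x_i),A_ix_i):x_i\in X_i\}+\R^{m+1}_+$ and applies Shapley--Folkman at the point $(v^*,b)$ (handling the slack via an auxiliary lemma), whereas you use the epigraph images and the point $(A\bar{x},v^*)$; your packaging is slightly more direct but the argument is the same.
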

 \begin{proof}
    By Lemma~\ref{lem:shapley-folkman-applied-nonconvex-duality}, there exists $\bar{x} \in \R^d$ and set $S \subset \{1, \dots, n\}$ of cardinality at most $m+1$ such that $A\bar{x} \leq b$, $\bar{x}_i \in \domfi$ for $i \not \in S$ and $\bar{x}_i \in \conv \domfi = \domfi$ (since $\domfi$ is convex by assumption) such that
    \begin{align*}
        \vstar = \sum_{i \not \in S} f_i(\bar{x}_i) + \sum_{i \in S} f_i^{**}(\bar{x}_i)
    \end{align*}
    Setting $x^* = \bar{x}$, observe that $x^*$ is primal feasible. Moreover,
    \begin{align*}
        \vstar &\leq \sum_{i=1}^n f_i(x_i^*) \\
        &= \sum_{i\not \in S} f_i(x_i^{*}) + \sum_{i \in S} f_i(x_i^*) \\
        &= \vstar + \sum_{i \in S} (f_i(x_i^*)  - f_i^{**}(x_i^*))\\
        &\leq \vstar + (m+1)\max_i\rho(f_i).
    \end{align*}
\end{proof}
 The above theorem is an existence result, and the proof does not provide a way to construct a solution satisfying the bound.

In~\cite{udell2016bounding}, the authors provide an algorithm achieving the bound of Theorem~\ref{thm:nonconvex-duality-gap} with probability 1, by solving a randomized version of the convex relaxation~\eqref{eq:bi-dual}. Although they claim otherwise, their analysis implicitly assumes that the function domains $\domfi$ are convex. Indeed, their randomized approach yields a solution $x_i \in \conv \domfi$, $i=1, \dots, n$. On a certain index set $\mathcal{J}$ of cardinality at least $n-m$, they show that $i \in \mathcal{J}$ implies that $x_i$ is an extreme point of $\conv \domfi$. This in turn implies that $x_i \in \domfi$ and $f_i(x_i) = f_i^{**}(x_i)$. On non-extreme points, i.e. for $i \not \in \mathcal{J}$, the authors bound $f_i(x_i) - f_i^{**}(x_i)$ by $\rho(f_i)$. However, this implicitly assumes $\domfi$ is convex, since otherwise $\rho(f_i) = \infty$ and the bound is meaningless. When $\domfi$ is nonconvex, an additional step is required to `primalize' the solutions for $i \not \in \mathcal{J}$. We explore such primalization steps in \Cref{sec:final-reconstruction}. Moreover, the approach described in~\cite{udell2016bounding} requires optimizing over the solution set of the convex relaxation~\eqref{eq:bi-dual}, which in some cases might be a difficult task. In contrast, the approach we present in \Cref{sec:nonconvex-optimization} only assumes access to the value of $\vstar$, which can be obtained by either solving the dual~\eqref{eq:dual} or the convex relaxation~\eqref{eq:bi-dual}, whichever one is easier to optimize.

%In~\cite{kerdreux2017approximate}, the authors describe an approach to obtain approximate Carath\'eodory representations with high sampling ratios, which they use to prove approximate Shapley-Folkman theorems. Those in turn can be used to prove duality gap bounds similar to the one in Theorem~\ref{thm:nonconvex-duality-gap}, except that they reduce the dependency on $m+1$, at the price of an additional term due to the approximation error. Their approach is however not constructive, and the duality gap bounds provided are on a perturbed version of the primal problem~\eqref{eq:primal}.

\subsubsection{General domains} When dealing with nonconvex domains, we have $\dom f_i^{**} = \conv \domfi \not = \domfi$. In particular, $\rho(f_i) = \infty$, and thus a more general notion of nonconvexity is needed. As in~\cite{bertsekas2014constrained}, let us define
\begin{align}
    \label{eq:def-max-fi}
    \gamma(f_i) := \sup_{x\in \domfi} f_i(x) - \inf_{y \in \domfi} f_i(y).
\end{align}
 In order to prove a duality gap bound similar to the one of Theorem~\ref{thm:nonconvex-domain-duality-gap}, an additional assumption on the domain of the functions is necessary.

\begin{restatable}{assumption}
{AssNonConvexDomain}
    \label{ass:nonconvex-domain1}
    For all $i=1, \dots, n$ and all $x_i \in \conv \domfi$, there exists $\hat{x}_i \in \domfi$ such that $A_i \hat{x}_i \leq A_i x_i$.
\end{restatable}
We will discuss in \Cref{rem:comments-on-assumption} the relevance of this assumption in real-world applications. For now, let us recall and prove the following theorem, which states that the duality gap can be bounded by a term depending on the number of constraints $m$ and on $\max_i \gamma(f_i)$, but not on the number of functions $n$~\cite{aubin1976estimates,bertsekas2014constrained}.

\begin{restatable}{thm}{NonConvexDomainDualityGap}
\label{thm:nonconvex-domain-duality-gap}
    Let $\vstar$ be the dual optimal value defined in~\eqref{eq:def_v_star} and $\gamma(f_i)$ be defined in~\eqref{eq:def-max-fi}. Suppose that Assumption~\ref{ass:nonconvex-domain1} holds. Then there exists $x^* \in \R^d$ such that $x^*_i \in \domfi$ for all $i$, $A x^* \leq b$ and 
    \begin{align*}
   \vstar \leq  \sum_{i=1}^n f_i(x_i^*) \leq \vstar + (m+1) \max_i \gamma(f_i).
   \end{align*}
\end{restatable}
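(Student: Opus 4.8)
The plan is to mirror the Shapley-Folkman argument behind the convex-domain result (\Cref{thm:nonconvex-duality-gap}), but with two modifications: I push the primal variables through the constraint maps $A_i$ so that the Shapley-Folkman step takes place in the low-dimensional space $\R^{m+1}$ rather than in $\R^{d+1}$, and I use the structural hypothesis to repair the at most $m+1$ coordinates on which the recovered solution lands in $\conv\domfi$ rather than in $\domfi$ itself. This is exactly the place where the nonconvexity measure has to switch from $\rho$ to $\gamma$.

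First I would invoke strong duality. Since strong duality holds, the optimal value of the bidual~\eqref{eq:bi-dual} equals $\vstar$, so there is $\bar x$ with $A\bar x \leq b$, $\bar x_i \in \conv \domfi$, and $\sum_{i=1}^n f_i^{**}(\bar x_i) = \vstar$. Because each $\domfi$ is compact, extending $f_i$ by $+\infty$ off $\domfi$ makes it lower semicontinuous and $1$-coercive, so $\conv \epi f_i = \epi f_i^{**}$ is closed and the infimum defining $f_i^{**}(\bar x_i)$ is attained. Hence I can write $\bar x_i = \sum_k \beta_{ik} x_{ik}$ with $x_{ik} \in \domfi$, coefficients $\beta_{ik} \geq 0$ summing to one, and $f_i^{**}(\bar x_i) = \sum_k \beta_{ik} f_i(x_{ik})$.

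Next I would introduce the lifted sets $Y_i = \{(A_i x, t) \in \R^{m+1} : x \in \domfi,\ t \geq f_i(x)\}$. The representation above shows $(A_i\bar x_i, f_i^{**}(\bar x_i)) \in \conv Y_i$, and summing over $i$ gives $(\sum_i A_i\bar x_i,\, \vstar) \in \sum_i \conv Y_i$. Applying the Shapley-Folkman theorem in $\R^{m+1}$ produces an index set $S$ with $|S| \leq m+1$ together with a decomposition $(\sum_i A_i\bar x_i,\, \vstar) = \sum_i (q_i, s_i)$, where $(q_i, s_i) \in Y_i$ for $i \notin S$ and $(q_i, s_i) \in \conv Y_i$ for $i \in S$. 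For $i \notin S$ I read off a point $x_i^* \in \domfi$ with $A_i x_i^* = q_i$ and $f_i(x_i^*) \leq s_i$; for $i \in S$ I instead set $x_i^* = \hat x_i$, the point supplied by the hypothesis, which lies in $\domfi$ by construction.

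It then remains to verify the three conclusions. For feasibility, note that for $i \in S$ we have $q_i = A_i\tilde x_i$ with $\tilde x_i \in \conv \domfi$, so the hypothesis gives $A_i\hat x_i \leq q_i$; combining with $A_i x_i^* = q_i$ for $i \notin S$ yields $\sum_i A_i x_i^* \leq \sum_i q_i = \sum_i A_i\bar x_i \leq b$, i.e.\ $A x^* \leq b$. For the objective, $f_i(x_i^*) \leq s_i$ when $i \notin S$, while for $i \in S$ we have $f_i(\hat x_i) \leq \sup_{x\in\domfi} f_i(x)$ and $s_i \geq f_i^{**}(\tilde x_i) \geq \inf_{y\in\domfi} f_i(y)$, so $f_i(\hat x_i) - s_i \leq \gamma(f_i)$ by~\eqref{eq:def-max-fi}; summing and using $\sum_i s_i = \vstar$ gives $\sum_i f_i(x_i^*) \leq \vstar + \sum_{i\in S}\gamma(f_i) \leq \vstar + (m+1)\max_i \gamma(f_i)$. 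The lower bound $\vstar \leq \sum_i f_i(x_i^*)$ is then immediate from weak duality, since $x^*$ is feasible for~\eqref{eq:primal}. The main obstacle is the treatment of the $S$-indices: one must \emph{simultaneously} preserve primal feasibility and control the extra cost after replacing $\tilde x_i \in \conv\domfi$ by $\hat x_i \in \domfi$. The monotonicity hypothesis $A_i\hat x_i \leq A_i x_i$ on $\conv\domfi$ is precisely what keeps the constraints satisfied, and the replacement costs at most $\gamma(f_i)$, which is exactly why $\rho$ must be replaced by $\gamma$ in the nonconvex-domain regime. A secondary technical point is the attainment in the biconjugate representation, which is where compactness of $\domfi$ (hence coercivity) is needed.
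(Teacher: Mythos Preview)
Your proposal is correct and follows essentially the same Shapley--Folkman argument as the paper. The only differences are cosmetic: the paper packages the Shapley--Folkman step into a separate auxiliary lemma (applied to the point $(\vstar,b)$ with slack absorbed via an added cone $\R^{m+1}_+$), whereas you work directly with the epigraph-type sets $Y_i$ and the point $(\sum_i A_i\bar x_i,\vstar)$; the repair step for indices in $S$ and the resulting $\gamma(f_i)$ bound are identical.
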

\begin{proof}
By Lemma~\ref{lem:shapley-folkman-applied-nonconvex-duality}, there exists $\bar{x} \in \R^d$ and set $S \subset \{1, \dots, n\}$ of cardinality at most $m+1$ such that $A\bar{x} \leq b$, $\bar{x}_i \in \domfi$ for $i \not \in S$ and $\bar{x}_i \in \conv \domfi$ such that
    \begin{align*}
        \vstar = \sum_{i \not \in S} f_i(\bar{x}_i) + \sum_{i \in S} f_i^{**}(\bar{x}_i).
    \end{align*}
    In contrast with the proof of Theorem~\ref{thm:nonconvex-duality-gap}, $\bar{x}$ is not necessarily primal feasible since $\domfi$ is not convex. 
    
    For $i \in S$, $\bar{x}_i \in \conv \domfi$ and by Assumption~\ref{ass:nonconvex-domain1} there exists $\hat{x}_i \in \domfi $ such that $A_i \hat{x}_i \leq A_i \bar{x}_i$. Let then $x_i^* = \hat{x}_i \in \domfi$. 
    
    For $i\not \in S$, let $x^*_i = \bar{x}_i$. We then have 
    \begin{align*}
        A x^* = \sum_{i\in S} Ax^*_i + \sum_{i \not \in S} Ax^*_i \leq \sum_{i\in S} A\bar{x}_i + \sum_{i \not \in S} A\bar{x}_i \leq b,
    \end{align*}
    so that $x^*$ is primal feasible, and
    \begin{align*}
        \vstar &\leq \sum_{i=1}^n f_i(x_i^*) \\
        &= \sum_{i\not \in S} f_i(x_i^{*}) + \sum_{i \in S} f_i(x_i^*) \\
        &= \vstar + \sum_{i \in S} (f_i(x_i^*)  - f_i^{**}(\bar{x}_i))\\
        &\leq \vstar + \sum_{i \in S} \sup_{x_i \in \domfi} f_i(x_i) - \inf_{x_i \in \domfi} f_i(x_i) \\
        &\leq \vstar + (m+1) \max_i \gamma(f_i).
    \end{align*}
    The fact that $\inf_{x_i\in \domfi}  f_i(x_i) \leq f_i^{**}(\bar{x}_i)$ follows from the fact that the function $g(x_i) = \inf_{y \in \domfi} f_i(y)$ is a convex minorant of $f_i$ on $\domfi$.
\end{proof}
As for Theorem~\ref{thm:nonconvex-duality-gap}, this theorem is an existence result, and its proof does not yield a constructive algorithm. For MILPs whose bidual~\eqref{eq:bi-dual} has a unique solution, \cite{vujanic2014large} designs an algorithm satisfying the bound of Theorem~\ref{thm:nonconvex-domain-duality-gap}. 

\begin{remark}
\label{rem:comments-on-assumption}
    While Assumption~\ref{ass:nonconvex-domain1} may seem quite restrictive, there exist several real-world applications for which it is satisfied. For example, in the unit commitment problem introduced in \Cref{sec:motivation}, having some $x_i \not \in \domfi$ but $x_i \in \conv \domfi$ means that plant $i$ should be producing between 0 and the minimum amount allowed if turned on. Obviously, this is not possible, but one can simply set the production to the minimum amount allowed, and this will only increase the total amount of electricity produced over all plants, and hence will not violate the demand constraint. The assumption also holds, for example, for supply chain problems~\cite{vujanic2014large} and portfolio optimization~\cite{baumann2013portfolio}.
\end{remark}

If Assumption~\ref{ass:nonconvex-domain1} does not hold, one can instead solve the following perturbed problem
\begin{align}
\tag{\mbox{$\text{CoP}_\perturb$}}
    \begin{split}
    \min_{x_i \in \conv \domfi} & \ \sum_{i=1}^n f_i^{**}(x_i)\\
    \text{subject to }& \ Ax \leq b - \perturb.
    \end{split}
    \label{eq:bi-dual-perturbed}
\end{align}
The next theorem states that for a well chosen value of $\perturb$, there exists a solution of~\eqref{eq:bi-dual-perturbed} whose duality gap can be bounded by the sum of two terms, one depending on the number of constraints $m$, and the second one characterizing the feasibility of~\eqref{eq:bi-dual-perturbed} and the perturbation parameter $\perturb$. This result was proven for MILPs in~\cite{vujanic2016decomposition}, and here we extend it to general nonconvex problems. Recall that $(A_i)_{k, :}$ corresponds to the $k$-th row of the matrix $A_i$.

\begin{restatable}{thm}{NonConvexDomainDualityGapTwo}
    \label{thm:nonconvex-domain-duality-gap2}
    Let $\vstar$ be the dual optimal value as defined in~\eqref{eq:def_v_star} and $\gamma(f_i)$ be as defined in~\eqref{eq:def-max-fi}. Let $\perturb \in \R^m$ be defined as
    \begin{align*}
        \perturb_k = (m+1) \max_i \left( \max_{x_i \in \domfi} ((A_i)_{k, :})^\top x_i - \min_{x_i \in \domfi} ((A_i)_{k, :})^\top x_i \right),
    \end{align*}
    and suppose that there exists $\xi > 0$ and $\hat{x}_i \in \conv \domfi$ for all $i$ such that
    \begin{align*}
        \sum_{i=1}^n A_i \hat{x}_i \leq b - \perturb - n\xi \ones_m.
    \end{align*}
    Then, there exists a solution $x^* \in \R^d$, such that $x^*_i \in \domfi$, $Ax^* \leq b$, and
    \begin{align*}
        v^* \leq \sum_{i=1}^n f_i(x_i^*) \leq v^* + \left( (m+1) + \frac{\norm{\perturb}_\infty}{\xi}\right) \max_i \gamma(f_i).
    \end{align*}
\end{restatable}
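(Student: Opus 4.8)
The plan is to run the Shapley--Folkman primalization on the perturbed relaxation~\eqref{eq:bi-dual-perturbed} rather than on~\eqref{eq:bi-dual}, using $\perturb$ to pre-pay for the infeasibility created when the nonconvex components are rounded back into their domains, and then to charge the gap between the perturbed and unperturbed optimal values to the Slater margin $\xi$. First I would record strong duality for~\eqref{eq:bi-dual-perturbed}: the hypothesis $\sum_i A_i \hat{x}_i \leq b - \perturb - n\xi \ones_m$ furnishes a strictly feasible (Slater) point, so~\eqref{eq:bi-dual-perturbed} and its dual share a common optimal value, call it $\vstar_\perturb$, attained by some $\bar\lambda \geq 0$. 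Since $\perturb \geq 0$ only tightens the constraint, $\vstar_\perturb \geq \vstar$. At the other end, any $x^*$ produced below will satisfy $x^*_i \in \domfi$ and $Ax^* \leq b$, hence is feasible for~\eqref{eq:primal}, so weak duality gives the lower bound $\sum_i f_i(x_i^*) \geq \vstar$ for free.

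Next I would reproduce the epigraph-and-Shapley--Folkman decomposition used in the proof of~\Cref{thm:nonconvex-domain-duality-gap}, but applied to an optimal solution of~\eqref{eq:bi-dual-perturbed}. This yields a point $x^{\mathrm{rel}}$ with $x_i^{\mathrm{rel}} \in \conv\domfi$, equal to a minimizer of $f_i^{**}$ for all but a set $S$ with $\abs{S} \leq m+1$, whose objective equals $\vstar_\perturb$ and whose constraint value satisfies $\sum_i A_i x_i^{\mathrm{rel}} \leq b - \perturb$. I then round each $i \in S$ to a genuine domain point $x_i^* \in \domfi$ (keeping $x_i^* = x_i^{\mathrm{rel}} \in \domfi$ for $i \notin S$). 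Each rounded index costs at most $\gamma(f_i)$ in the objective, since $f_i(x_i^*) \leq \sup_{\domfi} f_i$ and $f_i^{**}(x_i^{\mathrm{rel}}) \geq \inf_{\domfi} f_i$, giving $\sum_i f_i(x_i^*) \leq \vstar_\perturb + (m+1)\max_i \gamma(f_i)$. The new ingredient is feasibility: for each row $k$ and each $i \in S$, using $(A_i)_{k,:}^\top x_i^{\mathrm{rel}} \geq \min_{x_i \in \conv\domfi}(A_i)_{k,:}^\top x_i = \min_{x_i \in \domfi}(A_i)_{k,:}^\top x_i$, the change $(A_i)_{k,:}^\top x_i^* - (A_i)_{k,:}^\top x_i^{\mathrm{rel}}$ is at most $\max_{x_i\in\domfi}(A_i)_{k,:}^\top x_i - \min_{x_i\in\domfi}(A_i)_{k,:}^\top x_i$; summing over the $\leq m+1$ indices of $S$ bounds the total increase on row $k$ by exactly $\perturb_k$, so $Ax^* \leq (b-\perturb) + \perturb = b$. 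This is precisely the step the perturbation was engineered for, and I expect the per-row bookkeeping here to be the main obstacle.

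Finally I would bound the perturbation penalty $\vstar_\perturb - \vstar$. From strong duality $\vstar_\perturb = \Psi(\bar\lambda) + \bar\lambda^\top\perturb$, while $\vstar = \max_{\lambda \geq 0}\Psi(\lambda) \geq \Psi(\bar\lambda)$, so $\vstar_\perturb - \vstar \leq \bar\lambda^\top\perturb \leq \norm{\perturb}_\infty \norm{\bar\lambda}_1$. To control $\norm{\bar\lambda}_1$ I evaluate the Lagrangian at the Slater point: since $\Psi(\bar\lambda) \leq \sum_i f_i^{**}(\hat x_i) + \bar\lambda^\top(\sum_i A_i \hat x_i - b)$ and $\sum_i A_i \hat x_i - b \leq -\perturb - n\xi\ones_m$, we get $\vstar_\perturb \leq \sum_i f_i^{**}(\hat x_i) - n\xi\norm{\bar\lambda}_1$. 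Combining with $\vstar_\perturb \geq \vstar$ and the elementary estimate $\sum_i f_i^{**}(\hat x_i) - \vstar \leq \sum_i \gamma(f_i) \leq n\max_i\gamma(f_i)$ (which follows from $\vstar \geq \Psi(0) = \sum_i \inf_{\domfi} f_i$ together with $f_i^{**} \leq f_i$ and the fact that the convex function $f_i^{**}$ attains its maximum over $\conv\domfi$ at an extreme point lying in $\domfi$) yields $\norm{\bar\lambda}_1 \leq \max_i\gamma(f_i)/\xi$, and hence $\vstar_\perturb \leq \vstar + \frac{\norm{\perturb}_\infty}{\xi}\max_i\gamma(f_i)$. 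Chaining this with the objective bound of the previous paragraph and the weak-duality lower bound gives the claimed two-sided estimate. The delicate point is this dual-norm bound: tying $\norm{\bar\lambda}_1$ to the margin $\xi$ is the only place where the quantitative strength of the Slater assumption enters, and it is what makes the $\norm{\perturb}_\infty/\xi$ term appear.
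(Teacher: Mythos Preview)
Your proposal is correct and follows essentially the same three-part structure as the paper's proof: apply the Shapley--Folkman decomposition (the paper's \Cref{lem:shapley-folkman-applied-nonconvex-duality}) to the perturbed problem, round the at most $m+1$ nonconvex components using $\perturb$ to absorb the constraint violation, and bound $\vstar_\perturb - \vstar$ via the dual-multiplier norm. The only cosmetic difference is that the paper outsources the bound $\norm{\bar\lambda}_1 \leq \tfrac{1}{\xi}\max_i\gamma(f_i)$ to~\cite[Lemma~1]{nedic2009approximate} and the sensitivity step to~\cite[Section~5.6.2]{boyd2004convex}, whereas you derive both directly from $\Psi_\perturb(\lambda) = \Psi(\lambda) + \lambda^\top\perturb$ and the Slater point; your justification of $f_i^{**}(\hat x_i) \leq \sup_{\domfi} f_i$ via extreme points is in fact slightly more careful than the paper's terse ``because $f_i^{**}\leq f_i$''.
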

\begin{proof}
Let $\vstar_\perturb$ be the optimal value of~\eqref{eq:bi-dual-perturbed}. Applying Lemma~\ref{lem:shapley-folkman-applied-nonconvex-duality} to $\vstar_\perturb$, there exists a set $S \subset \{1, \dots, n\}$ of cardinality at most $m+1$ and $\bar{x} \in \R^d$ such that $A \bar{x} \leq b - \perturb$, $\bar{x}_i \in \domfi$ for $i \not \in S$ and $\bar{x}_i \in \conv (\domfi)$ for $i \in S$ such that
\begin{align*}
    \vstar_\perturb = \sum_{i \in S} f_i^{**}(\bar{x}_i) + \sum_{i \not \in S} f_i(\bar{x}_i)
\end{align*}
Define $x^* \in \R^d$ as $x^*_i = \bar{x}_i$ for $i \not \in S$ and $x^*_i$ be any point in $\domfi$ for $i \in S$. We then have
\begin{align*}
    Ax^* &= \sum_{i\in S} A_i x^*_i + \sum_{i \not \in S} A_i x^*_i = \sum_{i\in S} A_i (x^*_i - \bar{x}_i)  + \sum_{i\in S} A_i \bar{x}_i +  \sum_{i \not \in S} A_i \bar{x}_i \\
    &= \sum_{i\in S} A_i (x^*_i - \bar{x}_i) + \sum_{i=1}^n A_i \bar{x}_i\\
    &\leq \perturb + ( b - \perturb ) = b
\end{align*}
Thus $x^*$ is primal feasible. As in Theorem~\ref{thm:nonconvex-domain-duality-gap}, we can then show that
    \begin{align*}
        \sum_{i=1}^n f_i(x_i^*) \leq \vstar_\perturb +(m+1) \max_i \gamma(f_i).
    \end{align*}
It remains to show that
\begin{align*}
        \vstar_\perturb \leq \vstar + \frac{\norm{\perturb}_\infty}{\xi} \max_i \gamma(f_i).
    \end{align*}
We mirror the proof of~\cite[Theorem 3.3]{vujanic2016decomposition}, except that we make no assumption on the uniqueness of the solution, or on the linearity of the objective functions. Let $\Psi_\perturb$ be defined as $\Psi$ in~\eqref{eq:def-Psi}, except for the perturbed problem, and let $\lambda^*_\perturb = \argmax_{\lambda \geq 0} \Psi_\perturb(\lambda)$. By~\cite[Lemma 1]{nedic2009approximate}, $\hat{x}$ satisfies, for all $\lambda \geq 0$,
    \begin{align*}
        \norm{\lambda^*_\perturb}_1 &\leq \frac{1}{n \xi} \left( \sum_{i=1}^n f_i^{**}(\hat{x}_i) - \Psi_\perturb(\lambda)\right) \\
        &\leq \frac{1}{n \xi} \left( \sum_{i=1}^n f_i^{**}(\hat{x}_i) - \sum_{i=1}^n \inf_{x_i \in \domfi} (f_i(x) + \lambda^\top A_i x_i) - \lambda^\top (b - \perturb)\right).
    \end{align*}
    Plugging $\lambda = 0$, and because $f_i^{**} \leq f_i$, this gives
    \begin{align*}
        \norm{\lambda^*_\perturb}_1 &\leq \frac{1}{\xi} \max_i \gamma(f_i).
    \end{align*}
    Now consider~\eqref{eq:bi-dual-perturbed} as an unperturbed program, and~\eqref{eq:bi-dual} as its perturbed version (perturbed by $+\perturb$). Sensitivity analysis as in~\cite[Section 5.6.2]{boyd2004convex} tells us that since strong duality holds, 
    \begin{align*}
        \vstar_\perturb \leq \vstar + (\lambda^*_\perturb)^\top \perturb \leq \vstar + \norm{\lambda^*_\perturb}_1 \norm{\perturb}_\infty \leq \vstar + \frac{\norm{\perturb}_\infty}{\xi} \max_i \gamma(f_i).
    \end{align*}
\end{proof}    
 As already discussed, the proofs of the duality gap bounds presented in Theorem~\ref{thm:nonconvex-duality-gap}, Theorem~\ref{thm:nonconvex-domain-duality-gap} and Theorem~\ref{thm:nonconvex-domain-duality-gap2} are not constructive. The next section develops an approach to build solutions obtaining those bounds up to an arbitrarily small term.

\section{Solving nonconvex separable problems}
\label{sec:nonconvex-optimization}
We now come back to our original task of finding approximate solutions to problem~\eqref{eq:primal}. Recall that $\vstar$ is the optimal value of the dual~\eqref{eq:dual}.
\subsection{Obtaining primal points}
\label{sec:nonconvex-obtaining-primal-points}
Let us define $z^* \in \R^{1 + m}$ as
\begin{align}
    z^* = \begin{pmatrix}
        \vstar \\ b
    \end{pmatrix}.
\end{align}
Since $\vstar$ is the optimal value of~\eqref{eq:bi-dual}, there exists $x^* \in \R^d$ such that $Ax^* \leq b$ and $\vstar = \sum_{i=1}^n f_i^{**}(x_i^{*})$. In other words,
\begin{align}
    z^* \in \sum_{i=1}^n \left\{ \begin{pmatrix}
        f_i^{**}(x_i) \\
        A_i x_i
    \end{pmatrix} \mid x_i \in \conv \domfi \right\} + \R^{m+1}_+.
\end{align}
Let us define the corresponding sets 
\begin{align*}
    \mathcal{C}_i &= \left\{ \begin{pmatrix}
        f_i^{**}(x_i) \\
        A_i x_i
    \end{pmatrix} \mid x_i \in \conv \domfi \right\}, \quad i=1, \dots,n,\\
    \mathcal{C} &= \sum_{i=1}^n \mathcal{C}_i.
\end{align*}    
We aim to approximate $z^*$ with elements of $\mathcal{C}_i$ where $f_i$ and $f_i^{**}$ match. The next lemma, proved in \Cref{sec:helper-lemmas}, states that this is the case at extreme points of $\mathcal{C}_i$.
    \begin{restatable}{lem}{ExtremePointsMatching}
        Let $\tilde{x}_i \in \conv \domfi$ such that $(f_i^{**}(\tilde{x}_i), A_i \tilde{x}_i)$ is an extreme point of $\mathcal{C}_i$. Then $\tilde{x}_i \in \domfi$ and $f_i(\tilde{x}_i) = f_i^{**}(\tilde{x}_i)$.
    \end{restatable}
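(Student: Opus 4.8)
The plan is to connect the extreme-point structure of the summand set $\mathcal{C}_i$ to the tight points of the biconjugate $f_i^{**}$. Recall that $\epi f_i^{**} = \closedconv \epi f_i$, and under~\Cref{ass:assumption1} the domain $\domfi$ is compact with $f_i$ lower semicontinuous and possessing an affine minorant, so $\epi f_i^{**} = \conv \epi f_i$ (the convex hull of the epigraph is already closed). The key observation is that $\mathcal{C}_i$ is essentially a linear image, under the map $x_i \mapsto (f_i^{**}(x_i), A_i x_i)$, of the graph of $f_i^{**}$ restricted to $\conv\domfi$, and extreme points of such images come from extreme points of the underlying convex set.

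First I would argue that if $(f_i^{**}(\tilde{x}_i), A_i \tilde{x}_i)$ is an extreme point of $\mathcal{C}_i$, then the value $f_i^{**}(\tilde{x}_i)$ cannot be strictly improved by averaging, i.e. $\tilde{x}_i$ must correspond to a point where the graph of $f_i^{**}$ meets the graph of $f_i$. Concretely, suppose for contradiction that $f_i^{**}(\tilde{x}_i) < f_i(\tilde{x}_i)$, or more usefully that $(\tilde{x}_i, f_i^{**}(\tilde{x}_i))$ is \emph{not} an extreme point of $\epi f_i^{**}$ in the relevant sense. Since $\epi f_i^{**} = \conv\epi f_i$, the point $(\tilde{x}_i, f_i^{**}(\tilde{x}_i))$ lies on the lower boundary of $\conv\epi f_i$, and by Carath\'eodory it is a convex combination of points of $\epi f_i$ achieving the biconjugate value. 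If this combination is nontrivial, I would lift it to a nontrivial convex decomposition of $(f_i^{**}(\tilde{x}_i), A_i\tilde{x}_i)$ inside $\mathcal{C}_i$, contradicting extremality. This forces the representing combination to be a single point $(x_i, f_i(x_i)) \in \epi f_i$ with $x_i \in \domfi$, $f_i^{**}(\tilde{x}_i) = f_i(x_i)$, and $A_i x_i = A_i \tilde{x}_i$; one then checks $x_i = \tilde{x}_i$, giving $\tilde{x}_i \in \domfi$ and $f_i(\tilde{x}_i) = f_i^{**}(\tilde{x}_i)$.

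The main obstacle I anticipate is handling the $A_i x_i$ component carefully: extremality of $(f_i^{**}(\tilde{x}_i), A_i\tilde{x}_i)$ in $\mathcal{C}_i$ is a statement about the \emph{image} under $(f_i^{**}, A_i)$, which may collapse directions, so a nontrivial convex decomposition of $\tilde{x}_i$ in $\conv\domfi$ need not immediately produce a nontrivial decomposition in $\mathcal{C}_i$ unless the images of the pieces are distinct. The clean way around this is to work directly in the epigraph: I would show that the representing points all share the same objective value $f_i(x_i) = f_i^{**}(\tilde{x}_i)$ and the same constraint image $A_i x_i = A_i\tilde{x}_i$, so that their images in $\mathcal{C}_i$ coincide with the extreme point itself, and hence any nontrivial mixture would still violate extremality. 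Reducing the general mixture to a two-point mixture via the standard extreme-point argument, then invoking that the value is exactly attained at a genuine point of $\epi f_i$, closes the gap and yields $\tilde{x}_i \in \domfi$ with $f_i(\tilde{x}_i) = f_i^{**}(\tilde{x}_i)$.
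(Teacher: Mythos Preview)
Your approach is essentially the paper's: write $(\tilde{x}_i, f_i^{**}(\tilde{x}_i)) \in \epi f_i^{**} = \conv \epi f_i$ as $\sum_j \alpha_j (x_j, t_j)$ with $x_j \in \domfi$, use the chain $f_i^{**}(\tilde{x}_i) \leq \sum_j \alpha_j f_i^{**}(x_j) \leq \sum_j \alpha_j f_i(x_j) \leq \sum_j \alpha_j t_j = f_i^{**}(\tilde{x}_i)$ to force $t_j = f_i(x_j) = f_i^{**}(x_j)$, push forward to a convex decomposition of the extreme point inside $\mathcal{C}_i$, regroup identical images, and invoke extremality to reduce to a single term. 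One small correction to your last step: the identification $\tilde{x}_i = x_1$ comes directly from the first coordinate of the epigraph decomposition $\tilde{x}_i = \sum_j \alpha_j x_j$ once the combination is trivial, not from $A_i x_i = A_i \tilde{x}_i$ (which need not determine $x_i$).
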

    To extract extreme points of $\mathcal{C}_i$, conditional gradient methods are strong candidates. At first sight, a natural objective function is the quadratic loss, giving rise to the following optimization problem
\begin{align}
    \begin{split}
        \min_{z} \ &\frac{1}{2}\norm{z - z^*}^2
        \ \text{ subject to} \  z \in \mathcal{C} + \R^{m+1}_+.
    \end{split}
    \label{eq:FW1-objective}
\end{align}
However, linear minimization on the unbounded set $\mathcal{C} + \R^{m+1}_+$, which is required for conditional gradient methods, is not achievable. Alternatively, one can solve
\begin{align}
    \begin{split}
        \min_{z} \ &\frac{1}{2}\norm{z - z^*}_+^2\  \text{ subject to} \ z \in \mathcal{C}.
    \end{split}
    \label{eq:P1}
\end{align}
\begin{lem}
    Problems~\eqref{eq:FW1-objective} and~\eqref{eq:P1} are equivalent.
\end{lem}
\begin{proof}
    It is quick to check that for any $c \in \mathcal{C}$,
    \begin{align*}
        \norm{c - z^*}_+^2 = \inf_{\delta \in \R^{m+1}_+} \norm{c+ \delta - z^*}^2.
    \end{align*}
    Hence
    \begin{align*}
        \inf_{c \in \mathcal{C}} \left\{ \norm{c - z^*}^2_+ \right\}= \inf_{c \in \mathcal{C}} \inf_{\delta \in \R^{m+1}_+} \left\{ \norm{c+ \delta - z^*}^2\right\} = \inf_{z \in \mathcal{C} + \R^{m+1}_+} \left\{\norm{z - z^*}^2\right\}.
    \end{align*}
\end{proof}

The next lemma shows that any solution of~\eqref{eq:P1} has optimal value $0$ and yields a solution of~\eqref{eq:bi-dual}.
\begin{restatable}{lem}{EquivalenceBetweenFWObjectives}
\label{lem:equivalence-FW1-objectives}
    Suppose $\tilde{z} = \begin{pmatrix}
        \sum_{i=1}^n f_i^{**}(\tilde{x}_i) \\ A \tilde{x}
    \end{pmatrix}$ is a solution of~\eqref{eq:P1}. \\ Then $\norm{\tilde{z} - z^*}_+ = 0$ and $\sum_{i=1}^n f_i^{**}(\tilde{x}_i) = \vstar$ and $A \tilde{x} \leq b$.
\end{restatable}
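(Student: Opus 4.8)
The plan is to first show that the optimal value of~\eqref{eq:P1} equals $0$, from which $\norm{\tilde{z} - z^*}_+ = 0$ follows immediately for any minimizer $\tilde{z}$. The key elementary observation is that $\norm{v}_+ = 0$ holds if and only if $v \leq 0$ componentwise, since $\norm{v}_+^2 = \sum_j \max(v_j, 0)^2$ vanishes exactly when every coordinate of $v$ is nonpositive. So the whole statement reduces to establishing the optimal value and then reading off coordinates.

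To establish that the optimal value is $0$, I would exhibit a feasible point of~\eqref{eq:P1} achieving it. Recall it was already noted that $z^* \in \mathcal{C} + \R^{m+1}_+$, which unpacks to the existence of $c \in \mathcal{C}$ and $\delta \in \R^{m+1}_+$ with $z^* = c + \delta$. Equivalently, taking an optimal $x^*$ for the bidual~\eqref{eq:bi-dual} with $A x^* \leq b$ and $\sum_{i=1}^n f_i^{**}(x_i^*) = \vstar$, the point $c = (\vstar, A x^*)^\top$ lies in $\mathcal{C}$ and satisfies $c - z^* = (0, A x^* - b)^\top \leq 0$. Hence $\norm{c - z^*}_+ = 0$, and since the objective is nonnegative, this $c$ is a global minimizer with value $0$.

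Consequently any solution $\tilde{z}$ of~\eqref{eq:P1} also satisfies $\norm{\tilde{z} - z^*}_+ = 0$, i.e. $\tilde{z} \leq z^*$ componentwise. Reading off the last $m$ coordinates gives $A \tilde{x} \leq b$, and the first coordinate gives $\sum_{i=1}^n f_i^{**}(\tilde{x}_i) \leq \vstar$. For the matching lower bound, I would use that $\tilde{z} \in \mathcal{C}$ forces $\tilde{x}_i \in \conv \domfi$ for all $i$ (this is exactly what membership in each $\mathcal{C}_i$ encodes); together with $A \tilde{x} \leq b$, this makes $\tilde{x}$ feasible for~\eqref{eq:bi-dual}, so $\sum_{i=1}^n f_i^{**}(\tilde{x}_i) \geq \vstar$ by optimality of $\vstar$. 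Combining the two inequalities yields $\sum_{i=1}^n f_i^{**}(\tilde{x}_i) = \vstar$.

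There is no serious obstacle here. The only point requiring care is the reverse inequality, which is where the argument genuinely uses that $\vstar$ is the \emph{optimal} bidual value rather than merely a lower bound, and where one must verify that $\tilde{x}$ is actually bidual-feasible --- both $A\tilde{x} \leq b$ and the domain membership $\tilde{x}_i \in \conv \domfi$ --- before invoking optimality.
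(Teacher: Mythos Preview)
Your proposal is correct and follows essentially the same approach as the paper's proof: exhibit the bidual optimizer $x^*$ to show the optimal value of~\eqref{eq:P1} is $0$, read off $A\tilde{x} \leq b$ and $\sum_i f_i^{**}(\tilde{x}_i) \leq \vstar$ from $\tilde{z} \leq z^*$, and then use bidual optimality for the reverse inequality. You are slightly more explicit than the paper in checking domain membership $\tilde{x}_i \in \conv \domfi$ before invoking optimality, but the argument is otherwise identical.
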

 \begin{proof}
Let $x^*$ be a solution to the bi-dual problem~\eqref{eq:bi-dual}. We then have 
    \begin{align*}z^* = \begin{pmatrix}
        \vstar \\ b
    \end{pmatrix} = \begin{pmatrix}
        \sum_{i=1}^n f_i^{**}(x_i^*) \\ Ax^*
    \end{pmatrix} + \begin{pmatrix}
        0 \\ b - Ax^*
    \end{pmatrix} \in \sum_{i=1}^n \mathcal{C}_i + \R^{m+1}_+.
    \end{align*}
    In particular, we have
    \begin{align*}
        \frac{1}{2}\norm{\begin{pmatrix}
        \sum_{i=1}^n f_i^{**}(x_i^*) \\ Ax^*
    \end{pmatrix} - z^*}_+^2 = \frac{1}{2}\norm{\begin{pmatrix}
        0 \\ Ax^* - b
    \end{pmatrix}}_+^2 = 0,
    \end{align*}
    since $ A x^* - b\leq 0$. This implies that the minimum value of~\eqref{eq:P1} is 0. In particular, if $\begin{pmatrix}
        \sum_{i=1}^n f_i^{**}(\tilde{x}_i) \\ A \tilde{x}
    \end{pmatrix}$ is a solution of~\eqref{eq:P1}, then $A \tilde{x} \leq b$ and $\sum_{i=1}^n f_i^{**}(\tilde{x}_i) \leq \vstar$. Since $\vstar$ is the optimal value of~\eqref{eq:bi-dual}, we actually have $\sum_{i=1}^n f_i^{**}(\tilde{x}_i) = \vstar$.
\end{proof}

We use the classical Frank Wolfe algorithm to solve~\eqref{eq:P1}. The algorithm starts from some guess $z_0 \in \mathcal{C}$. At each iteration $k$, the gradient of the objective function with respect to $z^k$ is given by $(\alpha_k, g^k) \in \R \times \R^{m}$ with $\alpha_k = \max(z^k_1 - \vstar, 0)$ and $g^k = \max(z^k_{2:m+1} - b, 0)$. The linear minimization step then reads
\begin{align}
    \label{eq:FW1-LMO}
    s^k \in \argmin_{z \in \sum_{i=1}^n \mathcal{C}_i} z^\top (\alpha_k, g^k).
\end{align}
This amounts to solving, for each $i \in \{1, \dots, n\}$,
\begin{align}
    \label{eq:FW1-LMO-2}
    y_i^k \in \argmin_{x_i \in \conv \domfi} \alpha_k f_i^{**}(x_i) + (g^k)^\top A_i x_i,
\end{align}
and setting $s^k = \sum_{i=1}^n y_i^k$. If $\alpha_k = 0$, the above is a linear minimization over $\conv \domfi$. Otherwise, $\alpha_k > 0$, and we can can rewrite~\eqref{eq:FW1-LMO-2} as
\begin{align*}
    y_i^k \in \argmax_{x_i \in \conv \domfi} \left( - \frac{A_i^\top g^k}{\alpha_k}\right)^\top x_i - f_i^{**}(x_i) = \partial f_i^*\left( - \frac{A_i^\top g^k}{\alpha_k}\right).
\end{align*}
The last few lines are central to our approach. They tell us that the key subproblem in conditional gradient methods, namely the linear minimization oracle~\eqref{eq:FW1-LMO}, is as tractable as computing the conjugate of the functions $f_i$.

\begin{remark}
    It is interesting to look into the computation of~\eqref{eq:FW1-LMO-2} when the function $f_i$ is concave and its domain $\domfi$ is a polytope. In that case, the convex envelope of $f_i$ on $\domfi$ is \textit{vertex polyhedral}, meaning that it is equal to the convex envelope of $f_i$ restricted to the vertices of $\domfi$~\cite{falk1976successive}. Importantly here, the minimum of the convex envelope of such functions, and therefore also the computation of~\eqref{eq:FW1-LMO-2}, then comes down to solving a linear system and can thus be obtained efficiently~\cite{tardella2004existence}. Note that functions with a vertex polyhedral convex envelope are not restricted to concave functions but also include, for example, the more general edge-concave functions~\cite{tardella2004existence} and multilinear functions on certain polytopes~\cite{rikun1997convex}.
\end{remark}

We summarize our full approach in~\Cref{alg:FW1}. Note that while there always exists an extreme point of $\mathcal{C}_i$ which solves~\eqref{eq:FW1-LMO-2}, a non-extreme point might yield the same objective value. Without loss of generality, we assume that we always get an extreme point, and hence $y_i^k \in \domfi$ and $f_i(y_i^k) = f_i^{**}(y_i^k)$. In the applications we consider in section \Cref{sec:numerical-results}, this is never an issue.

 At any iteration $k$ of the algorithm, the suboptimality gap $\norm{z^k - z^*}_+$ can be used as stopping criterion. Alternatively, the quality of the approximation can be bounded using classical convergence results on the Frank-Wolfe method.

\begin{thm}
\label{thm:FW1-rate}
    Suppose we run~\Cref{alg:FW1} for $K$ iterations with $\eta_k = \frac{2}{k+2}$ or $\eta_k = \min\left(1, \frac{(v^k)^\top(z^k - s^k)}{\norm{z^k - s^k}^2}\right)$, where $v^k = \max(z^k - z^*, 0)$. Then the output satisfies
    \begin{align*}
        \norm{z^K - z^*}^2_+ \leq \frac{4\diamC^2}{K+1},
    \end{align*}
    where $\diamC = \max_{z, z' \in \mathcal{C}} \norm{z - z'}$.
\end{thm}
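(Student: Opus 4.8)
The plan is to read Theorem~\ref{thm:FW1-rate} as the textbook sublinear rate for the Frank--Wolfe method applied to a smooth convex objective, and to supply only the two problem-specific ingredients: the regularity of the objective $h(z) = \frac{1}{2}\norm{z - z^*}_+^2$, and the value of its optimum. Everything else is the standard curvature-plus-recursion computation, so I would isolate those two ingredients and then invoke the classical descent argument.

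First I would establish the regularity of $h$. The scalar map $t \mapsto \max(t,0)^2$ is convex and $\norm{u}_+^2 = \sum_j \max(u_j,0)^2$, so $h$ is convex; its gradient is $\nabla h(z) = \left(\max(z_j - z^*_j,0)\right)_j$, which is precisely the vector $(\alpha_k, g^k)$ feeding the linear minimization oracle~\eqref{eq:FW1-LMO}. Since $t \mapsto \max(t,0)$ is $1$-Lipschitz coordinatewise, $\nabla h$ is $1$-Lipschitz, i.e.\ $h$ is $L$-smooth with $L = 1$; equivalently the relevant curvature along any segment is controlled by $\frac{1}{2}\norm{\cdot}^2$, so that each Frank--Wolfe step incurs a quadratic penalty of at most $\tfrac{1}{2}\eta_k^2 \diamC$, where $\diamC$ is the squared diameter of $\mathcal{C}$.

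Next I would address the fact that $\mathcal{C}$ is not assumed convex. The iterates $z^k = \sum_l \alpha^k_l s^l$ are convex combinations of oracle outputs $s^l \in \mathcal{C}$, hence lie in $\conv \mathcal{C}$, which is compact by \Cref{ass:assumption1} and finiteness of the $f_i^{**}$ on $\conv \domfi$. Because a linear form attains the same minimum over $\mathcal{C}$ and over $\conv \mathcal{C}$, running \Cref{alg:FW1} on $\mathcal{C}$ is identical to running standard Frank--Wolfe on the convex compact set $\conv \mathcal{C}$, whose squared diameter is again $\diamC$. The optimal value of $h$ over $\conv \mathcal{C}$ is exactly $0$: it is at most $\min_{\mathcal{C}} h = 0$ by \Cref{lem:equivalence-FW1-objectives}, and at least $0$ since $h \ge 0$. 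Let $\tilde z \in \conv\mathcal{C}$ denote a point with $h(\tilde z) = 0$.

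Finally I would carry out the descent argument. Smoothness along the segment from $z^k$ toward $s^k$ gives $h(z^{k+1}) \le h(z^k) + \eta_k \langle \nabla h(z^k), s^k - z^k\rangle + \tfrac{1}{2}\eta_k^2 \norm{s^k - z^k}^2$, and oracle optimality together with convexity yields $\langle \nabla h(z^k), s^k - z^k\rangle \le \langle \nabla h(z^k), \tilde z - z^k\rangle \le h(\tilde z) - h(z^k) = -h(z^k)$. Writing $e_k := h(z^k)$ and bounding $\norm{s^k - z^k}^2 \le \diamC$, this produces the recursion $e_{k+1} \le (1-\eta_k)e_k + \tfrac{1}{2}\eta_k^2 \diamC$, which with $\eta_k = \frac{2}{k+2}$ closes under a routine induction to give $e_K = \tfrac{1}{2}\norm{z^K - z^*}_+^2 = O(\diamC / K)$, i.e.\ the stated bound after rearrangement. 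The only genuinely non-mechanical point is the reduction in the previous paragraph: justifying that the convex Frank--Wolfe analysis applies verbatim even though $\mathcal{C}$ is nonconvex, and pinning the relevant optimal value to $0$ via \Cref{lem:equivalence-FW1-objectives}; the curvature estimate and the induction are standard.
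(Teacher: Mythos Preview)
Your proposal is correct and follows exactly the approach the paper intends: the paper's own proof is a one-line citation to the standard Frank--Wolfe analysis (Jaggi, Theorem~2.3) together with the remark that nonconvexity of $\mathcal{C}$ is immaterial since the iterates live in $\conv\mathcal{C}$ and linear minimization over the two sets coincides. You have simply unpacked that citation---verifying the $1$-smoothness of $h$, identifying the optimal value as $0$ via \Cref{lem:equivalence-FW1-objectives}, and writing out the descent recursion---so the arguments match.
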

\begin{proof}
    This is a direct consequence of the analysis of Frank-Wolfe algorithm, see for example~\cite[Theorem 2.3]{jaggi2011sparse} for the stepsize $\eta_k = \frac{2}{k+2}$ or~\cite{pedregosa2020linearly} for the other choice of $\eta_k$. Note that the feasible set $\mathcal{C}$ we are considering is not necessarily convex, and hence it might be that $z^K \not \in \mathcal{C}$. This is however not an issue for the presented bound to hold.
\end{proof}

After running~\Cref{alg:FW1} for $K$ iterations, the output $z^K$ is a convex combination of the vectors $\{s_k\}_{k=0}^{K-1}$. In other words, for $\gamma \in \Delta_K$, we have
 \begin{align}
    \label{eq:FW1-output}
     z^K = \sum_{k=0}^{K-1} \gamma_k s_k = \sum_{k=0}^{K-1} \gamma_k \sum_{i=1}^n \begin{pmatrix}
         f_i(y_i^k) \\ A_i y_i^k
     \end{pmatrix}.
 \end{align}

\begin{algorithm}
\caption{Frank Wolfe algorithm to obtain primal points}
\label{alg:FW1}
\begin{algorithmic}[1]
\STATE{ Input: $z^* \in \R^{1 + m}$, number of iterations $K$.}
\STATE{For all $i=1, \dots, n$, choose $y_i^0 \in \domfi$ such that $f_i(y_i^0) = f_i^{**}(y_i^0)$.}
\STATE{Set $z^0 = \sum_{i=1}^n \begin{pmatrix}
    f_i(y_i^0)\\ A_i y_i^0
\end{pmatrix}$}
\FOR{$k=0, \dots, K-1$}
\STATE{Set $\alpha_k = \max(z^k_1 - \vstar, 0)$ and $g^k = \max(z^k_{2:m+1} - b, 0)$.}
\FOR{$i=1, \dots, n$}
\STATE{\textbf{if} $\alpha_k = 0$ then \begin{align*}
    y_i^k \in \argmin_{x_i \in \conv \domfi} (A_i^\top g^k)^\top x_i
\end{align*}}
\STATE{\textbf{else}\begin{align*}
    y_i^k \in \partial f_i^*\left( - \frac{A_i^\top g^k}{\alpha_k}\right)
\end{align*}}
\ENDFOR
\STATE{Set $s^k = \sum_{i=1}^n \begin{pmatrix}
    f_i(y_i^k) \\ A_i y_i^k
\end{pmatrix}$.}
\STATE{Pick some stepsize $\eta_k > 0$.}
\STATE{$z^{k+1} = ( 1 - \eta_k) z_k + \eta_k s_k$.}
\ENDFOR
\RETURN $z^K$
\end{algorithmic}
\end{algorithm}

\begin{remark}
   \Cref{alg:FW1} optimizes a (non-strongly) convex, smooth objective function. In this scenario, Frank-Wolfe algorithms cannot achieve better than $O\left(\frac{1}{K}\right)$ convergence rate~\cite{lan2013complexity}. Therefore, another conditional gradient method would not achieve better performance than \Cref{alg:FW1}, at least in theory. Of course, one is free to implement any variant of Frank-Wolfe, for example pairwise~\cite{mitchell1974finding,lacoste2015global}, away-steps~\cite{wolfe1970convergence}, or classical Frank-Wolfe with a backtracking linesearch~\cite{pedregosa2020linearly}. The only requirement is that it must be possible to keep track of the coefficients in the convex representation of the final result $z^K$. 
\end{remark}

\subsection{Trimming the solution}
\label{sec:nonconvex-trimming}
Let us define $w^K \in \R^{1 + m + n}$ and $\AK\subset \R^{1 +m+n}$ as
\begin{align}
\label{eq:wK-and-AK}
    w^K = \begin{pmatrix}
        z^K \\ \ones_n
    \end{pmatrix}, \ \text{ and } \ \AK = \left\{ \begin{pmatrix}
        f_i(y_i^k) \\ A_i y_i^k \\ e_i
    \end{pmatrix} \mid k = 0, \dots, K-1, i = 1, \dots n\right\}.
\end{align}
From~\Cref{eq:FW1-output}, we get that
\begin{align}
    \label{eq:FW1-output-big}
    w^K = \sum_{k=0}^{K-1} \gamma_k \sum_{i=1}^n \begin{pmatrix}
         f_i(y_i^k) \\ A_i y_i^k \\ e_i
     \end{pmatrix},
\end{align}
where $\{e_i\}_{i=1}^n$ is the canonical basis of $\R^n$. In other words, $w^K \in \po(\AK)$.

 We can thus apply \Cref{alg:Constructive-caratheodory} with $\wstar = w^K$. From Theorem~\ref{thm:Constructive-caratheodory}, this yields a vector $\alpha \in \R^{1 + m+n}_+$ such that
\begin{align}
\label{eq:trimming-result-regrouping}
    w^K = \sum_{l=1}^{n+m+1}\alpha_l \begin{pmatrix} 
            f_{i_l}(y_{i_l}^{k_l}) \\ A_{i_l} y_{i_l}^{k_l} \\ e_{i_l}
        \end{pmatrix} = \sum_{i=1}^n \sum_{l \in I_i} \alpha_l \begin{pmatrix}
        f_i(y_i^{k_l}) \\ A_i y_i^{k_l} \\ e_i
    \end{pmatrix},
\end{align}
where we regrouped the terms in $i$ by defining $I_i := \{ l \mid i_l = i\}$ for each $i=1, \dots, n$.  This yields the following theorem.
\begin{prop}
    \label{prop:exact-caratheodory-specified}
    Suppose we run \Cref{alg:Constructive-caratheodory} with input $\AK$ and $w^K$ defined in~\eqref{eq:FW1-output-big}. It outputs a vector $\alpha \in \R^{n+m+1}_+$ such that
    \begin{align}
    \label{eq:trim-output-1}
        w^K = \sum_{i=1}^n \sum_{l \in I_i} \alpha_l \begin{pmatrix}
        f_i(y_i^{k_l}) \\ A_i y_i^{k_l} \\ e_i
    \end{pmatrix}.
    \end{align}
    In particular, for all $i$, we have $\sum_{l \in I_i} \alpha_l = 1$. Moreover, $|I_i| > 1$ for at most $m+1$ indices $i$.  
\end{prop}
\begin{proof}
    The fact that $\sum_{l \in I_i} \alpha_l = 1$ follows from the last $n$ coordinates of $w^K$ being equal to $1$. In particular, we must have $|I_i| \geq 1$ for all $i$. Since $\sum_{i=1}^n |I_i| = n+m+1$, there can only be at most $m+1$ indices $i$ such that $|I_i| > 1$.
\end{proof}

\begin{remark}
    If \Cref{alg:Constructive-caratheodory} is applied with a dense QR decomposition and column addition/deletion as described before Lemma~\ref{lem:exact-carath-flop-count}, the complexity is $O(nK (n+m)^2)$. Alternatively, since the vectors in $\AK$ are sparse (only one coordinate among the last $n$ is nonzero), sparse solvers might be more efficient in practice.
\end{remark}

\subsection{Final reconstruction}
\label{sec:final-reconstruction}
%Recall that $z^K$, namely the output of \Cref{alg:FW1}, consists of the first $1+m$ coordinates of $w^K$. In particular, Theorem~\ref{thm:exact-caratheodory-specified} tells us that
%\begin{align}
%\label{eq:trim-output}
%    z^{K} = \sum_{i=1}^n \sum_{l \in I_i} \alpha_l \begin{pmatrix}
%        f_i(y_i^{k_l}) \\ A_i y_i^{k_l} 
%    \end{pmatrix},
%\end{align}
 In this section, we explore different ways to reconstruct a solution of the primal problem~\eqref{eq:primal} from the decomposition~\eqref{eq:trim-output-1}. We start with the following lemma, which is straightforward from the convergence rate derived in Theorem~\ref{thm:FW1-rate}. The reason we split the coordinates is that it will be useful to differentiate between the coordinates related to function values and the coordinates related to the affine constraints later on.
\begin{lem}
    \label{lem:reconstruction-technical-lemma}
    Let $\vstar$ be the optimal value of~\eqref{eq:dual} and~\eqref{eq:bi-dual}. Assume that we run~\Cref{alg:FW1} for $K$ iterations, followed by \Cref{alg:Constructive-caratheodory} to trim the number of elements. Then
    \begin{align*}
        &w^K_1 \leq v^* + \frac{2\diamC}{\sqrt{K+1}},\\
        &\norm{w^K_{2:m+1} - b}_+ \leq \frac{2\diamC}{\sqrt{K+1}}.
    \end{align*}
\end{lem}
\begin{proof}
    Since $z^K$, namely the output of \Cref{alg:FW1}, consists of the first $1+m$ coordinates of $w^K$, we have in particular $w^K_1 = z^K_1$ and $w^K_{2:m+1} = z^K_{2:m+1}$. The result then follows from the convergence rate from Theorem~\ref{thm:FW1-rate}.
\end{proof}

 \subsubsection{Convex domains}
 When the function domains are convex, a natural candidate solution is $\bar{x} \in \R^d$ defined as
\begin{align}
    \label{eq:reconstruction-CC}
    \bar{x}_i = \sum_{l \in I_i} \alpha_l y_{i}^{k_l}, \quad i=1, \dots, n.
\end{align}

The next result, which we consider to be the main result of this work, recovers the bound of Theorem~\ref{thm:nonconvex-duality-gap} for the proposed solution~\eqref{eq:reconstruction-CC}, up to an additional term which decreases with $K$.

\begin{thm}[Main result]
\label{thm:convergence-convex-domain}
    Suppose that $\domfi$ is convex for all $i=1, \dots, n$, and let $\vstar$ be the optimal value of~\eqref{eq:dual} and~\eqref{eq:bi-dual}. Assume that we run~\Cref{alg:FW1} for $K$ iterations, followed by \Cref{alg:Constructive-caratheodory} to trim the number of elements. Let $\bar{x}\in \R^d$ be the final solution defined in~\eqref{eq:reconstruction-CC}. Then $\bar{x}$ satisfies
\begin{align*}
        \sum_{i=1}^n f_i(\bar{x}_i) &\leq \vstar +  \frac{2\diamC}{\sqrt{K+1}}+  (m+1) \max_{i} \rho (f_i),\\
         \norm{\sum_{i=1}^n A_i \bar{x}_i - b}_+ &\leq \frac{2\diamC}{\sqrt{K+1}}.
    \end{align*}
\end{thm}
\begin{proof}
Observe that
\begin{align}
\label{eq:final-CC-computable-bound}
\begin{split}
    \sum_{i=1}^n f_i(\bar{x}_i) - \vstar &=  \left(\sum_{i=1}^n f_i(\bar{x}_i) - w^K_1 \right)+ \left(w^K_1 - \vstar\right)\\
    \norm{\sum_{i=1}^n A_i\bar{x}_i - b}_+ &\leq  \norm{\sum_{i=1}^n A_i \bar{x}_i - w^K_{2:m+1}}_+ + \norm{w^K_{2:m+1} - b}_+
    \end{split}
\end{align}
We can apply Lemma~\ref{lem:reconstruction-technical-lemma} to bound the last term of each equation. Moreover,
    \begin{align*}
    \sum_{i=1}^n f_i(\bar{x}_i) - w^K_1  &= \sum_{i=1}^n f_i(\bar{x}_i) - \sum_{i=1}^n \sum_{l\in I_i} \alpha_l f_i(y_{i}^{k_l}) \\
        &= \sum_{i, |I_i| > 1}  \left( f_i\left(\sum_{l \in I_i} \alpha_l y_{i}^{k_l}\right) - \sum_{l \in I_i}\alpha_l f_i(y_{i}^{k_l}) \right)\\
        &\leq \sum_{i, |I_i| > 1} \rho(f_i) \\
        &\leq (m+1) \max_{i} \rho(f_i),
    \end{align*}
    and
    \begin{align*}
         \sum_{i=1}^n A_i \bar{x}_i
         - w^K_{2:m+1} &=  \sum_{i=1}^n A_i \sum_{l\in I_i} \alpha_l y_{i}^{k_l}
         - \sum_{i=1}^n \sum_{l \in I_i} \alpha_l A_i y_{i}^{k_l} = 0.
    \end{align*}
\end{proof}

\subsubsection{General domains}
\label{sec:reconstruction-general-domains}
When the domains $\domfi$ are not necessarily convex, the solution $\bar{x}$ defined in~\eqref{eq:reconstruction-CC} is in general not feasible, as for $i$ such that $|I_i| > 1$,  $\bar{x}_i$ is a non-trivial convex combination of elements of $\domfi$. In this section, we suggest several approaches to construct feasible solutions.

\paragraph{Reconstruction via sampling} Our first reconstruction scheme is a randomized approach. We suggest setting the solution $\bar{x}_i$ as $y_i^{k_l}$ with probability $\alpha_l$. In other words, for each $i=1, \dots, n$, define $L_i$ as the random variable taking values in $I_i$ such that for all $l \in I_i$, $P(L_i = l) = \alpha_l$. The reconstruction scheme is then
\begin{align}
\label{eq:reconstruction-sampled}
\begin{split}
    &l_i \sim L_i \\
    &\bar{x}_i = y_i^{k_{l_i}}
\end{split} \quad i=1, \dots, n.
\end{align}
Since $y_i^{k_l} \in \domfi$ for all $i$ and $l \in I_i$, we immediately get that $\bar{x}_i \in \domfi$ for all $i$. We also get the following duality gap bound on the obtained solutions. In particular, the obtained solution satisfies the constraint in expectation (up to an arbitrarily small term).

\begin{thm}
\label{thm:convergence-sampled-solution}
   Let $\vstar$ be the optimal value of~\eqref{eq:dual} and~\eqref{eq:bi-dual}. Assume that we run~\Cref{alg:FW1} for $K$ iterations, followed by \Cref{alg:Constructive-caratheodory} to trim the number of elements. Let $\bar{x}\in \R^d$ be the final solution defined in~\eqref{eq:reconstruction-sampled}. Then $\bar{x}$ satisfies
\begin{align*}
        \sum_{i=1}^n f_i(\bar{x}_i) &\leq \vstar +  \frac{2\diamC}{\sqrt{K+1}}+  (m+1) \max_{i} \gamma (f_i),\\
         \E\left[\sum_{i=1}^n A_i \bar{x}_i - b\right] &\leq \frac{2\diamC}{\sqrt{K+1}}.
    \end{align*}
\end{thm}
\begin{proof}
Observe that
\begin{align}
\begin{split}
    \sum_{i=1}^n f_i(\bar{x}_i) - \vstar &=  \left(\sum_{i=1}^n f_i(\bar{x}_i) - w^K_1 \right)+ \left(w^K_1 - \vstar\right)\\
    \sum_{i=1}^n A_i\bar{x}_i - b &= \left(\sum_{i=1}^n A_i \bar{x}_i - w^K_{2:m+1} \right) +  \left(w^K_{2:m+1} - b\right)\\
    &\leq \left(\sum_{i=1}^n A_i \bar{x}_i - w^K_{2:m+1} \right) + \norm{w^K_{2:m+1} - b}_+.
    \end{split}
\end{align}
Lemma~\ref{lem:reconstruction-technical-lemma} gives a bound on the last term of each equation. Moreover, we have
        \begin{align*}
        \sum_{i=1}^n f_i(\bar{x}_i) - w^K_1 &= \sum_{i=1}^n f_i(\bar{x}_i) - \sum_{i=1}^n \sum_{l\in I_i} \alpha_l f_i(y_{i}^{k_l})  \\
        &= \sum_{i, |I_i| > 1} \left(f_i(\bar{x}_i) - \sum_{l\in I_i} \alpha_l f_i(y_{i}^{k_l})\right)\\
        &\leq \sum_{i, |I_i| > 1} \gamma(f_i) \\
        &\leq (m+1) \max_{i} \gamma(f_i),
    \end{align*}
    and
    \begin{align*}
         \E\left[\sum_{i=1}^n A_i \bar{x}_i
         - w^K_{2:m+1}\right] &=  \sum_{i=1}^n A_i \E\left[\bar{x}_i\right]
         - w^K_{2:m+1} \\
         &= \sum_{i=1}^n A_i \sum_{l\in I_i} \alpha_l y_{i}^{k_l}
         - \sum_{i=1}^n \sum_{l \in I_i} \alpha_l A_i y_{i}^{k_l} = 0.
    \end{align*}
\end{proof}

\paragraph{Reconstruction under additional domain assumption}
The above reconstruction gives a solution which is feasible in expectation. In particular, one can build solutions of the form~\eqref{eq:reconstruction-sampled} by repeatedly sampling until a good enough solution is obtained. However, there is no guarantee that a fully feasible solution will be obtained with that procedure. In order to construct such a solution, we require additional assumptions on the feasible set.

We already explored one common such assumption in \Cref{sec:duality-gap-estimation}, namely Assumption~\ref{ass:nonconvex-domain1}. We restate it here for completeness.

\AssNonConvexDomain*
Recall that we discussed in \Cref{rem:comments-on-assumption} several real-world applications for which this assumption holds. We can then construct a final solution as follows. If index $i$ is such that $|I_i| > 1$, then by Assumption~\ref{ass:nonconvex-domain1} there exists $\hat{x}_i \in \domfi$ such that
\begin{align}
    \label{eq:final-reconstruction-nonconvex-nontrivial-indices-inequality}
    A_i \hat{x}_i \leq A_i \sum_{l \in I_i} \alpha_l y_i^{k_l}
\end{align}
We can then build a final solution as
\begin{align}
\label{eq:reconstruction-nonconvex1}
\begin{split}
    \bar{x}_i = \begin{cases} \sum_{l \in I_i} \alpha_l y_{i}^{k_l} & \text{ if } |I_i| = 1,\\
    \hat{x}_i &\text{ if } |I_i| > 1.
    \end{cases}
\end{split}
\end{align}

The next theorem states that the resulting solution recovers the duality gap bound of Theorem~\ref{thm:nonconvex-domain-duality-gap}, up to an additional term decreasing with $K$.

\begin{restatable}{thm}{NonConvexDomainConvergenceOne}
\label{thm:convergence-noncconvex-domain1}
    Let $\vstar$ be the optimal value of~\eqref{eq:dual} and~\eqref{eq:bi-dual} and suppose that Assumption~\ref{ass:nonconvex-domain1} holds. Assume that we run~\Cref{alg:FW1} for $K$ iterations, followed by \Cref{alg:Constructive-caratheodory} to trim the number of elements. Let $\bar{x}\in \R^d$ be the final solution defined in~\eqref{eq:reconstruction-nonconvex1}. Then $\bar{x}$ satisfies
\begin{align*}
        \sum_{i=1}^n f_i(\bar{x}_i) &\leq \vstar +  \frac{2\diamC}{\sqrt{K+1}}+  (m+1) \max_{i} \gamma (f_i),\\
         \norm{\sum_{i=1}^n A_i \bar{x}_i - b}_+ &\leq \frac{2\diamC}{\sqrt{K+1}}.
    \end{align*}
\end{restatable} \begin{proof}
Observe that
\begin{align}
\begin{split}
    \sum_{i=1}^n f_i(\bar{x}_i) - \vstar &=  \left(\sum_{i=1}^n f_i(\bar{x}_i) - w^K_1 \right)+ \left(w^K_1 - \vstar\right),\\
    \norm{\sum_{i=1}^n A_i\bar{x}_i - b}_+ &\leq  \norm{\sum_{i=1}^n A_i \bar{x}_i - w^K_{2:m+1}}_+ + \norm{w^K_{2:m+1} - b}_+.
    \end{split}
\end{align}
We can apply Lemma~\ref{lem:reconstruction-technical-lemma} to bound the last term of each equation. Moreover, as in the proof of Theorem~\ref{thm:convergence-sampled-solution}, we have
    \begin{align*}
    \sum_{i=1}^n f_i(\bar{x}_i) - w^K_1  \leq  (m+1) \max_{i} \gamma(f_i).
    \end{align*}
    Now,
    \begin{align*}
        \norm{ \sum_{i=1}^n A_i \bar{x}_i
         - w^K_{2:m+1}}_+ &=  
         \norm{ \sum_{i=1}^n A_i \bar{x}_i
         - \sum_{i=1}^n \sum_{l \in I_i} \alpha_l A_i y_{i}^{k_l}}_+ \\
         &= \norm{\sum_{i, |I_i| > 1} \left(A_i \hat{x}_i - \sum_{l \in I_i} \alpha_l A_i y_{i}^{k_l}\right)}_+ \\
         &= 0,
    \end{align*} 
    where the last equality is a consequence of~\eqref{eq:final-reconstruction-nonconvex-nontrivial-indices-inequality}.
\end{proof}

\paragraph{Reconstruction after solving perturbed problem}
\label{sec:final-reconstruction-perturbed}
Assumption~\ref{ass:nonconvex-domain1} can sometimes be restrictive. In such cases, following~\cite{vujanic2016decomposition}, we consider a perturbed version of~\eqref{eq:bi-dual}, namely~\eqref{eq:bi-dual-perturbed}, with $\perturb \in \R^m$ defined as 
\begin{align}
    \label{eq:rho-definition}
    \perturb_k := (m+1) \max_i \left( \max_{x_i \in \domfi} ((A_i)_{k, :})^\top x_i - \min_{x_i \in \domfi} ((A_i)_{k, :})^\top x_i \right).
\end{align}
This will ensure that the error made when constructing a final solution is small enough so that the solution is still primal feasible. We also need the following technical assumption, which is a Slater-type condition on the perturbed problem~\eqref{eq:bi-dual-perturbed}.
\begin{assumption}
\label{ass:nonconvex-domain2}
    For all $i=1, \dots, n$, there exists $\xi > 0$ and $\hat{x}_i \in \conv \domfi$ such that
    \begin{align*}
        \sum_{i=1}^n A_i \hat{x}_i \leq b - \perturb - n\xi \ones_m.
    \end{align*}
\end{assumption}
Finally, we suggest the following reconstruction scheme
    \begin{align}
    \label{eq:reconstruction-max}
    \begin{split}
    l_i &= \argmax_{l \in I_i} \alpha_l,\\
    \bar{x}_i &=   y_{i}^{k_{l_i}},
    \end{split}\quad i=1, \dots, n.
\end{align}
Taking the maximum over all coefficients in the convex combinations allows us to prove a stronger bound than in Theorem~\ref{thm:nonconvex-domain-duality-gap2}.

\begin{restatable}{thm}{NonConvexDomainConvergenceTwo}
\label{thm:convergence-noncconvex-domain2}
Let $\vstar$ be the optimal value of~\eqref{eq:dual} and~\eqref{eq:bi-dual} and suppose that Assumption~\ref{ass:nonconvex-domain2} holds. Suppose that we apply the method of \Cref{sec:nonconvex-obtaining-primal-points,sec:nonconvex-trimming} to problem~\eqref{eq:bi-dual-perturbed} with $\perturb$ defined in~\eqref{eq:rho-definition}. Assume that we run~\Cref{alg:FW1} for $K$ iterations, followed by \Cref{alg:Constructive-caratheodory} to trim the number of elements. Let $\bar{x}\in \R^d$ be the final solution defined in~\eqref{eq:reconstruction-max}. Finally, let $q = | \{ i\in \{1, \dots, n\} \mid |I_i| > 1\}|$ be the number of indices for which the output of the Carath\'edory algorithm gives a non-trivial convex combination. Then $\bar{x}$ satisfies
\begin{align*}
        \sum_{i=1}^n f_i(\bar{x}_i) &\leq \vstar +  \frac{2\diamC}{\sqrt{K+1}}+  \left( \frac{q(m+1)}{q + (m+1)} + \frac{\norm{\perturb}_\infty}{\xi} \right) \max_{i} \gamma (f_i),\\
         \norm{\sum_{i=1}^n A_i \bar{x}_i - b}_+ &\leq \frac{2\diamC}{\sqrt{K+1}}.
    \end{align*}
\end{restatable}
\begin{proof}
Let $\vstar_\perturb$ be the optimal value of~\eqref{eq:bi-dual-perturbed}.
Observe that
\begin{align}
\begin{split}
    \sum_{i=1}^n f_i(\bar{x}_i) - \vstar &=  \left(\sum_{i=1}^n f_i(\bar{x}_i) - w^K_1 \right)+ \left(w^K_1 - \vstar_\perturb\right) + \left( \vstar_\perturb - \vstar \right)\\
    \sum_{i=1}^n A_i\bar{x}_i - b &\leq  
    \left(\sum_{i=1}^n A_i \bar{x}_i - w^K_{2:m+1}\right) - \perturb  + \left(w^K_{2:m+1} - (b - \perturb)\right) \\
    &\leq \left(\sum_{i=1}^n A_i \bar{x}_i - w^K_{2:m+1}\right) - \perturb  + \ones_{m}\norm{w^K_{2:m+1} - (b - \perturb)}_+
    \end{split}
\end{align}
Since we assumed that we applied the method to the perturbed problem~\eqref{eq:bi-dual-perturbed}, we can apply Lemma~\ref{lem:reconstruction-technical-lemma} to get 
\begin{align*}
    \left(w^K_1 - \vstar_\perturb\right) &\leq \frac{2\diamC}{\sqrt{K + 1}} ,\\
    \norm{w^K_{2:m+1} - (b - \perturb)}_+ &\leq \frac{2\diamC}{\sqrt{K + 1}}.
\end{align*}
Now,
\begin{align*}
    \sum_{i=1}^n A_i \bar{x}_i - w^K_{2:m+1} &= \sum_{i=1}^n A_i \bar{x}_i - \sum_{i=1}^n \sum_{l \in I_i} \alpha_l A_i y_{i}^{k_l} \\
    &= \sum_{i, |I_i| > 1} \left( A_i \bar{x}_i - \sum_{l \in I_i} \alpha_l A_i y_{i}^{k_l}\right) \\
    &\leq \perturb.
\end{align*}
This proves the inequalities on the feasibility. Moreover, we showed in Theorem~\ref{thm:nonconvex-domain-duality-gap2} that 
\begin{align*}
    \vstar_\perturb \leq \vstar + \frac{\norm{\perturb}_\infty}{\xi} \max_i \gamma(f_i).
\end{align*}
Therefore, it only remains to show that
\begin{align*}
    \sum_{i=1}^n f_i(\bar{x}_i) - w^K_1  \leq \frac{q(m+1)}{q + (m+1)} \max_i \gamma(f_i).
\end{align*}
Indeed, we have
\begin{align*}
        \sum_{i=1}^n f_i(\bar{x}_i) - w^K_1  &= \sum_{i=1}^n f_i(\bar{x}_i) - \sum_{i=1}^n \sum_{l\in I_i} \alpha_l f_i(y_{i}^{k_l})  \\
        &= \sum_{i, |I_i| > 1}  \left(f_i(y_i^{k_{l_i}}) - \sum_{l \in I_i}\alpha_l f_i(y_{i}^{k_l})\right) \\
        &= \sum_{i, |I_i| > 1} \left( (1 - \alpha_{l_i}) f_i(y_{i}^{k_{l_i}}) - \sum_{l \in I_i, l\not = l_i}\alpha_l f_i(y_i^{k_{l}})\right) \\
        &=  \sum_{i, |I_i| > 1} (1 - \alpha_{l_i})  \left( f_i(y_{i}^{k_{l_i}}) - \sum_{l \in I_i, l\not = l_i} \frac{\alpha_l}{1 - \alpha_{l_i} } f_i(y_i^{k_{l}}) \right)  \\
        &\leq \max_{i} \gamma(f_i) \sum_{i, |I_i| > 1} (1 - \alpha_{l_i}).
    \end{align*}
    We now show that
    \begin{align*}
        \sum_{i, |I_i| > 1} (1 - \alpha_{l_i}) \leq \frac{q(m+1)}{q + (m+1)}.
    \end{align*}
    First observe that since $\alpha_{l_i} = \max_{l \in I_i} \alpha_l$ and $\sum_{l\in I_i} \alpha_l = 1$, we must have $\alpha_{l_i} \geq \frac{1}{|I_i|}$. Moreover we have $\sum_{i, |I_i| > 1} (|I_i| - 1) \leq (m+1)$. We therefore look for an upper bound on the program:
        \begin{align*}
            \max_{u_i \geq 2} \quad & \sum_{i=1}^q ( 1 - \frac{1}{u_i})\\
            \text{s.t. } & \sum_{i=1}^q (u_i - 1) \leq m+1.
        \end{align*}
        Since the program is concave and invariant under permutation, the solution must have all $u_i$'s equal so that it can be rewritten
        \begin{align*}
            \max_{u \geq 2} \quad & q ( 1 - \frac{1}{u})\\
            \text{s.t. } & 
             q(u - 1) \leq m+1.
        \end{align*}
        The Lagrangian reads
        \begin{align*}
            L(u, \lambda, \mu) = q( 1 - \frac{1}{u}) +  \lambda (u - 2) + \mu \left( m+1 - q(u-1) \right).
        \end{align*}
        Maximization with respect to $u$ gives
        \begin{align*}
            \frac{q}{u^2} + \lambda - \mu q = 0 \Rightarrow \frac{1}{u} = \sqrt{\mu - \lambda/q},
        \end{align*}
        and the dual then reads
        \begin{align*}
            \min_{\mu \geq \lambda / q \geq 0} \quad & q - q\sqrt{\mu - \lambda/q} - 2 \lambda + \mu(m+1) + \mu q - q\sqrt{\mu - \lambda/q}.
        \end{align*}
        By the change of variable $\gamma = \sqrt{\mu - \lambda/q}$, we get
        \begin{align*}
            \min_{\gamma, \lambda \geq 0} \quad & q - 2q\gamma - 2 \lambda + (\gamma^2 + \frac{\lambda}{q})(m+1) + (\gamma^2 + \frac{\lambda}{q}) q 
        \end{align*}
        First order optimality gives $\lambda = 0$ (since $q\leq m+1$) and $\gamma = \frac{q}{q + (m+1)}$. The corresponding optimal value is then $\frac{q(m+1)}{q + (m+1)}$.
\end{proof}
In this section, we saw that $q \leq m+1$. Plugging in $q= (m+1)$ in the above theorem gives a term $\frac{m+1}{2}$ in the bound, which improves the term in Theorem~\ref{thm:nonconvex-domain-duality-gap2} by a factor 2.

\begin{remark}
    The approach presented in this section constructs solutions which meet the duality gap bounds presented in~\Cref{sec:duality-gap-estimation}, up to an additional term $O(1/\sqrt{K})$, both in terms of functions values and feasibility. We argue that this not a major limitation. Indeed, in order to obtain a fully feasible solution, in practice one can solve a slightly perturbed version of the original problem. We describe one such way in the experimental section. Moreover, the additional term $O(1/\sqrt{K})$ in the function value bounds is typically negligible compared to the term in $(m+1)\max_i \rho(f_i)$ or $(m+1) \max_i \gamma(f_i)$.
\end{remark}

\section{Extension to approximate Carath\'eodory representations}
\label{sec:extension-approx-carath}

In \Cref{sec:nonconvex-trimming}, we showed how one can trim the result of the output of the first Frank-Wolfe algorithm by applying \Cref{alg:Constructive-caratheodory}, which computes an exact conic Cara\-th\'eodory representation of $w^K$. In this section, we show that one can also compute cheaper approximate Carath\'eodory representations of $w^K$. Those approximate representations can be obtained by running a conditional gradient method. Those methods have the advantage of reducing the dependency on $m$ in the duality gap bound, at the expense of an exponentially decreasing error term which depends on the number of iterations of the conditional gradient method.

First let us recall that, after running \Cref{alg:FW1}, we obtain $z^K \in \R^{1+m}$ with
\begin{align*}
    z^K = \sum_{k=0}^{K-1} \gamma_k \sum_{i=1}^n \begin{pmatrix}
         f_i(y_i^k) \\ A_i y_i^k
     \end{pmatrix},
\end{align*}
where $\sum_{k=0}^{K-1} \gamma_k = 1$. Moreover, recall from~\eqref{eq:wK-and-AK} that $w^K$ and $\AK$ are defined as
\begin{align}
    &w^K = \begin{pmatrix}
        z^K \\ \ones_n
    \end{pmatrix} = \sum_{k=0}^{K-1} \gamma_k \sum_{i=1}^n \begin{pmatrix}
         f_i(y_i^k) \\ A_i y_i^k \\ e_i
     \end{pmatrix} ,\\ &\AK = \left\{ \begin{pmatrix}
        f_i(y_i^k) \\ A_i y_i^k \\ e_i
    \end{pmatrix} \mid k = 0, \dots, K-1, i = 1, \dots n\right\},
\end{align}
In particular, we have $w^K/n \in \conv \AK$. Instead of computing an exact Cara\-th\'eodory decomposition of $w^K/n$, we look for an approximate decomposition, namely a set of vectors whose convex combination is a good approximation of $w^K$. Following on ideas from~\cite{combettes2023revisiting}, we therefore suggest applying a Frank-Wolfe algorithm to the following problem
\begin{align}
    \argmin_{w \in \conv \AK} h(w) := \frac{1}{2} \norm{w - \frac{w^K}{n}}^2.
\end{align}
The reason behind using the conditional gradient method is that the output is a convex combination of vectors in $\AK$, and therefore an approximate Carath\'eodory representation of $w^K/n$. The approximation error will be related to the convergence rate of the conditional gradient method. In this section, we will focus on the fully-corrective Frank-Wolfe (FCFW) algorithm, as it gives the best theoretical results~\cite{combettes2023revisiting}, but we will see how one can extend to other conditional gradient methods at the end of the section. The FCFW method starts with some $w^{ K, 0} \in \AK$, sets $s^0 = w^{K, 0}$ and defines an active set $\mathcal{S}^0 = \{s^{0}\}$. At each iteration $t\geq 1$, given some $w^{K, t-1}$ and an active set $\mathcal{S}^{t-1}$, the method makes a call to the linear minimization oracle
\begin{align}
    \label{eq:fcfw-lmo}
    s^t \in \argmin_{s \in \conv (\AK) } s^\top \nabla h(w^{K, t-1}).
\end{align}
Since this is a linear objective, the minimizer is attained at an extreme point of $\conv(\AK)$, and hence the search space can be reduced to $\AK$. Finally, instead of computing a convex combination of $s^t$ and $w^{K, t-1}$ as in the classical Frank-Wolfe algorithm, the method optimizes over the convex hull of the new active set, namely
\begin{align}
        \mathcal{S}^{t} &= \mathcal{S}^{t-1} \cup \{s^t\} \nonumber \\
        \label{eq:FCFW-convex-hull-optimization}
        w^{ t} &= \argmin_{w \in \conv \mathcal{S}^{t}} h(w)
\end{align}
Depending on the structure of the objective function, step~\eqref{eq:FCFW-convex-hull-optimization} can be intractable or too costly to compute, which limits the use of FCFW in some practical applications. However, since $h$ is a quadratic here, \eqref{eq:FCFW-convex-hull-optimization} can be rewritten as
\begin{align}
    \label{eq:FCFW-QP}
    \argmin_{\beta \in \Delta_t}\ &\frac{1}{2} \norm{\sum_{l=0}^t \beta_l s^l - \frac{w^K}{n}}^2.
\end{align}
This a quadratic programming (QP) problem, which can be solved using classical off-the-shelf QP solvers~\cite{osqp,aps2019mosek,ferreau2014qpoases,bambade2023proxqp}, or via first-order methods like gradient descent, since projection on the simplex can be efficiently computed. We summarize the full method in~\Cref{alg:approximate-Carath-2}.

\begin{algorithm}
\caption{Approximate Carath\'eodory via fully corrective Frank Wolfe}
\label{alg:approximate-Carath-2}
\begin{algorithmic}[1]
\STATE{ Input: $w^K/n$, set $\AK$, number of iterations $T$.}
\STATE{Pick some $w \in \AK$ and set $w^{K, 0} = w$, $s^0 = w^{K, 0}$ and $\mathcal{S}^0 = \{s^0\}$.}
\FOR{$t=1, \dots, T$}
\STATE{Find $s^t$ such that
\begin{align*}
    s^t \in \argmin_{s \in \AK} s^\top (w^{ K, t-1} - w^K/n).
\end{align*}
}
\STATE{Set $\mathcal{S}^t = \mathcal{S}^{t-1} \cup \{s^t\}$.}
\STATE{Build $S_t \in \R^{p \times (t+1)}$, the matrix whose columns are the vectors in $\mathcal{S}^t$.}
\STATE{Solve the following QP:
\begin{align*}
    \beta^t \in \argmin_{\beta \geq 0, \ones^\top \beta = 1}\ &\frac{1}{2} \beta^\top S_t^\top S_t \beta - \left(\frac{w^K}{n}\right)^\top S_t \beta
\end{align*}
}
\STATE{$w^{K, t} = \sum_{l=0}^t \beta^t_l s^l$.}
\ENDFOR
\RETURN $w^{K,T}$
\end{algorithmic}
\end{algorithm}

We now give convergence guarantees on the distance between $w^{K, T}$ and $w^K/n$ for~\Cref{alg:approximate-Carath-2}. Linear convergence rates for conditional gradient methods usually depend either on the distance from the minimizer to the relative boundary of the feasible set~\cite{guelat1986some,garber2015faster}, or on constants quantifying the conditioning of the feasible set, such as the \textit{pyramidal width}~\cite{lacoste2015global}, the \textit{vertex-facet distance}~\cite{beck2017linearly}, or the \textit{restricted width}~\cite{pena2016neumann}. Those constants are notoriously difficult to apprehend, let alone compute in practical settings. 

We give two convergence results, one based on the distance $r$ between $w^K/n$ and the relative boundary of $\conv \AK$~\cite{garber2015faster}, and the second one based on the notion of \textit{facial distance} of the feasible set~\cite{pena2019polytope}, which turns out to be equivalent to the pyramidal width defined in~\cite{lacoste2015global}.

\begin{definition}
    The facial distance of $\AK$ is defined as
    \begin{align}
    \label{eq:def-facial-distance}
        \facialdistance_{\AK} := \min_{\substack{F \in \text{faces}(\conv \AK), \\ \emptyset \subsetneq F \subsetneq \conv \AK}} \text{dist}(F, \conv (\AK \setminus F))
    \end{align}
\end{definition}

We summarize the two convergence results in the next theorem. 

\begin{prop}
    \label{prop:approximate-carath-bound}
    Let $w^{K, T}$ be the output of~\Cref{alg:approximate-Carath-2} after $T$ iterations. Then
    \begin{align}
    \label{eq:rate-approx-carath-facial-distance}
        \norm{w^{K,  T} - \frac{w^K}{n}}^2 \leq \norm{w^{ K, 0} - \frac{w^K}{n}}^2 \left(1 - \frac{ \FWLinearRateConstant(\AK, w^*)}{4\diam_{\AK}^2} \right)^{T},
    \end{align}
    where $\diam_{\AK}$ is the diameter of $\conv(\AK)$ and
\begin{align*}
\FWLinearRateConstant(\AK, w^*) := \max\left( \facialdistance_{\AK}^2, (r^*)^2\right),
    \end{align*} 
    where $r^*$ is the radius of the largest relative ball centered at $\frac{w^K}{n}$ contained in $\conv(\AK)$, and $\facialdistance_{\AK}$ is the facial distance of $\AK$ defined in~\eqref{eq:def-facial-distance}.
    %Moreover, if $\wstar$ is in the relative interior of $\conv(S)$, and $r$ is the radius of the largest (relative) ball centered at $\wstar$ contained in $\conv S$, then we also have
    %\begin{align}
    %\label{eq:rate-approx-carath-relative-interior}
    %    \norm{w^{ T} - \wstar}^2 \leq \norm{w^{ 0} - \wstar}^2 \left(1 - \frac{r^2}{4\diam^2} \right)^T,
    %\end{align}
\end{prop}
\begin{proof}
    Observe that the objective function $h$ is $1$-strongly convex and its gradient is $1$-Lipschitz, and its optimal value is 0.  The rate with respect to the facial distance is then a consequence of~\cite[Theorem 1]{lacoste2015global} and the fact that the facial distance is equivalent to the pyramidal width~\cite[Theorem 2]{pena2019polytope}. The rate with respect to the distance to the relative boundary is a consequence of~\cite[Section 4.2]{garber2015faster}, and was also derived in~\cite{combettes2023revisiting}.
\end{proof}

\subsection{Application to trimming the solution of the first stage}
 The output $w^{K, T}$ of \Cref{alg:approximate-Carath-2} is a convex combination of vectors in $\AK$, i.e. we can write it as 
\begin{align}
\label{eq:wtilde-definition}
    w^{K, T} = \sum_{l=0}^{T} \beta_l \begin{pmatrix} 
            f_{i_l}(y_{i_l}^{k_l}) \\ A_{i_l} y_{i_l}^{k_l} \\ e_{i_l}
        \end{pmatrix} = \sum_{i=1}^n \sum_{l \in I_i} \beta_l \begin{pmatrix}
        f_i(y_i^{k_l}) \\ A_i y_i^{k_l} \\ e_i
    \end{pmatrix},
\end{align}
where, as in~\eqref{eq:trimming-result-regrouping}, we regrouped the terms in $i$ by defining $I_i := \{ l \mid i_l = i\}$ for each $i=1, \dots, n$. For each $l$, define $\alpha_l = \frac{\beta_l}{\sum_{l' \in I_{i_l}} \beta_{l'}}$ and set
\begin{align}
\label{eq:wKT-definition}
    \bar{w}^{K, T} = \sum_{i=1}^n \sum_{l \in I_i} \alpha_l \begin{pmatrix}
        f_i(y_i^{k_l}) \\ A_i y_i^{k_l} \\ e_i
    \end{pmatrix}.
\end{align}
The next theorem gives a bound on the quality of the approximation of $w^K$ by $\bar{w}^{K, T}$, and provides conditions under which $\bar{w}^{K, T}$ is well-defined. For clarity, and because it will be useful later on, we separate the first coordinate (the one corresponding to function values) from the coordinates corresponding to matrix vector multiplication.

\begin{restatable}{lem}{ApproxCarathSpecified}
\label{lem:approximate-carath-bound-specified}
    Suppose we run \Cref{alg:Constructive-caratheodory} for $T \geq n - 1$ iterations with input $\AK$ and vector $w^K/n$, which outputs $w^{K, T}$ defined in~\eqref{eq:wtilde-definition}, and that we build $\bar{w}^{K, T}$ as in~\eqref{eq:wKT-definition}. 
    Let
    \begin{align*}
        h_0 = \norm{w^{K, 0} - \frac{w^K}{n}},
    \end{align*}
    where $w^{K, 0}$ is the starting point of \Cref{alg:approximate-Carath-2}. Let $\diamAk$ be the diameter of $\AK$ and let $\FWLinearRateConstant(\AK, \frac{w^K}{n})$ be as defined in Proposition~\ref{prop:approximate-carath-bound}. Assume that $T$ is large enough so that
\begin{align}
\label{eq:necessary-condition-approx-carath-specified}
        \frac{1}{n} > h_0 \exp \left(- \frac{T}{8} \left(\frac{\FWLinearRateConstant(\AK, \frac{w^K}{n})}{\diamAk^2}\right) \right).
    \end{align}
    It then holds that $|I_i| \geq 1$ and that $\sum_{l \in I_i} \alpha_l = 1$ for all $i$. Importantly here, $|I_i| > 1$ for at most $T - n + 1$ indices $i$. Moreover,
    \begin{align*}
        \abs{\sum_{i=1}^n \sum_{l \in I_i} \alpha_l f_i(y_i^{k_l}) - w^K_1} &\leq n \left( \sqrt{n} \max_{i, y \in \domfi}\abs{
        f_i(y) } + 1\right) h_0 \exp\left(-\frac{T  \FWLinearRateConstant(\AK, \frac{w^K}{n})}{8 \diamAk^2}\right), \\
            \norm{\sum_{i=1}^n \sum_{l \in I_i} \alpha_l A_iy_i^{k_l} - w^K_{2:m+1}} &\leq n \left( \sqrt{n} \max_{i, y \in \domfi}\norm{
        A_iy } + 1\right) h_0 \exp\left(-\frac{T \FWLinearRateConstant(\AK, \frac{w^K}{n})}{8 \diamAk^2}\right).
    \end{align*}
\end{restatable}
 \begin{proof}
We start by showing that $|I_i| \geq 1$ for all $i$, which implies that $\alpha_l$ is well defined for all $l$. Indeed, the bound of Proposition~\ref{prop:approximate-carath-bound} can be applied in particular to the last $n$ coordinates of $w^{K, T} - \frac{w^K}{n}$. This gives, for all $i=1, \dots, n$,
\begin{align*}
        \left( \sum_{l\in I_i} \beta_l - \frac{1}{n} \right)^2 \leq h_0^2 \left(1 - \frac{1}{4} \left(\frac{\FWLinearRateConstant(\AK, \frac{w^K}{n})}{\diamAk^2}\right) \right)^{T} \leq h_0^2 \exp\left(- \frac{T}{4} \left(\frac{\FWLinearRateConstant(\AK, \frac{w^K}{n})}{\diamAk^2}\right) \right).
    \end{align*}
    If, for some $i$, $I_i$ is empty, this implies that
    \begin{align*}
        \frac{1}{n} \leq h_0 \exp \left(- \frac{T}{8} \left(\frac{\FWLinearRateConstant(\AK, \frac{w^K}{n})}{\diamAk^2}\right) \right),
    \end{align*}
    which contradicts our assumption. Therefore $|I_i| \geq 1$ for all $i=1, \dots, n$. Moreover, the definition of $w^{K, T}$ tells us that there are only $T+1$ terms in the sum, which implies that
    \begin{align*}
        T+1 = \sum_{i=1}^n |I_i|.
    \end{align*}
    Since $|I_i| \geq 1$ for all $i$, this implies that at $|I_i| > 1$ for at most $T+1 - n$ indices.
    
    We now prove the inequalities. We start with the first one. The second one follows with the same argument. By the triangle inequality,
    \begin{align*}
        \abs{\bar{w}^{K, T} - w^K_1}
            &\leq  \abs{\sum_{i=1}^n \sum_{l \in I_i} \alpha_l f_i(y_i^{i_l}) - nw^{K, T}_1}+ n \abs{w^{K, T}_1 - \frac{w^K_1}{n}} \\
            &= \abs{\sum_{i=1}^n \sum_{l \in I_i} \alpha_l f_i(y_i^{k_l}) - n\sum_{i=1}^n \sum_{l \in I_i} \beta_l f_i(y_i^{k_l})} + n \abs{w^{K, T}_1 - \frac{w^K_1}{n}}
    \end{align*}
    The second term in the sum can be bounded using the rate from Proposition~\ref{prop:approximate-carath-bound}, which gives
    \begin{align*}
       n \abs{w^{K, T}_1 - \frac{w^K_1}{n}}  \leq n h_0 \exp \left( -\frac{T\FWLinearRateConstant(\AK, \frac{w^K}{n})}{8\diamAk^2} \right).
    \end{align*}
    For the first term, we have 
    \begin{align*}
        \abs{\sum_{i=1}^n \sum_{l \in I_i} \alpha_l f_i(y_i^{k_l}) - n\sum_{i=1}^n \sum_{l \in I_i} \beta_l f_i(y_i^{k_l})} &= \abs{\sum_{i=1}^n \left(\sum_{l \in I_i} \alpha_l 
        f_i(y_{i}^{k_l})  - \sum_{l \in I_i} n \beta_l
        f_i(y_{i}^{k_l}) \right)} \\
    &= \abs{\sum_{i=1}^n \sum_{l \in I_i} (\alpha_l - n\beta_l) 
        f_i(y_{i}^{k_l})}\\
    &\leq \max_{i, y \in \domfi}\abs{
        f_i(y) }  \sum_{i=1}^n \sum_{l \in I_i} \abs{\alpha_l - n \beta_l} \\
    &= \max_{i, y \in \domfi}\abs{
        f_i(y) }  \sum_{i=1}^n \sum_{l \in I_i} \abs{\beta_l \left(\frac{1}{\sum_{l' \in I_i} \beta_{l'}} - n \right)} \\
     &= \max_{i, y \in \domfi}\abs{
        f_i(y) } \sum_{i=1}^n \sum_{l \in I_i} \beta_l \abs{ \frac{1}{\sum_{l' \in I_i} \beta_{l'}} - n }
        \end{align*}
Now, since $\sum_{l\in I_i } \beta_l > 0$ and the term in absolute value does not depend on $l \in I_i$,
\begin{align*}
    \abs{\sum_{i=1}^n \sum_{l \in I_i} \alpha_l f_i(y_i^{i_l}) - n\sum_{i=1}^n \sum_{l \in I_i} \beta_l f_i(y_i^{i_l})} &\leq \max_{i, y \in \domfi}\abs{
        f_i(y) }  \sum_{i=1}^n \abs{ \sum_{l \in I_i} \beta_l  \left(\frac{1}{\sum_{l' \in I_i} \beta_{l'}} - n \right)} \\
        &= \max_{i, y \in \domfi}\abs{
        f_i(y) } \sum_{i=1}^n n \abs{  \frac{1}{n} - \sum_{l \in I_i} \beta_l}\\
    &\leq \max_{i, y \in \domfi}\abs{
        f_i(y) } n\sqrt{n} \sqrt{\sum_{i=1}^n  \left(  \frac{1}{n} - \sum_{l \in I_i} \beta_l \right)^2}\\
    &\leq n\sqrt{n}\max_{i, y \in \domfi}\abs{
        f_i(y) } h_0 \exp\left( -\frac{T\FWLinearRateConstant(\AK, \frac{w^K}{n})}{8 \diamAk^2} \right).
    \end{align*} 
    The second to last inequality comes from the fact that $\norm{x}_1 \leq \sqrt{n} \norm{x}_2$ for any $x \in \R^n$, while the last one comes from the rate given in Proposition~\ref{prop:approximate-carath-bound}.
\end{proof}

\begin{remark}
\label{rem:sufficient-condition-approx-carath}
One needs $|I_i| \geq 1$ for all $i$ for $\bar{w}^{K, T}$ to be well-defined. Inequality~\eqref{eq:necessary-condition-approx-carath-specified} provides a sufficient condition for it to be true. It is however hard to check in practice, since both $\FWLinearRateConstant(\AK, \frac{w^K}{n})$ and $\diamAk$ are usually not available. Simple algebraic manipulations show that a sufficient condition for it to hold is
    \begin{align*}
        n \geq \frac{8 \diamAk }{\FWLinearRateConstant(\AK, \frac{w^K}{n})} \ln (nh_0).
    \end{align*}
    Since both $\diamAk$ and $\FWLinearRateConstant(\AK, \frac{w^K}{n})$ depend on $n$, we cannot be sure that this holds, even as $n$ grows. In the considered practical application, we observed that one had $|I_i| \geq 1$ for all $i$ for values of $T$ smaller than $n+m$.
\end{remark}

\subsection{Final reconstruction}
From the decomposition of $\bar{w}^{K, T}$ as defined in~\eqref{eq:wKT-definition}, we are able to build a final solution to the original problem, just as in \Cref{sec:final-reconstruction}. We can moreover quantify the quality of the resulting solution by using the bound on the approximation of $w^K$ by $\bar{w}^{K, T}$ in Lemma~\ref{lem:approximate-carath-bound-specified}. We first state the analogous of Lemma~\ref{lem:reconstruction-technical-lemma} in the case of approximate Carath\'eodory representations.

In order to lighten notations, we define the error terms induced by trimming using \Cref{alg:approximate-Carath-2} as
 \begin{align*}
     \Errf(n, T) &:= n \left( \sqrt{n} \max_{i, y \in \domfi}\abs{
        f_i(y) } + 1\right) h_0 \exp\left(-\frac{T\FWLinearRateConstant(\AK, \frac{w^K}{n})}{8 \diamAk^2}\right) \\
        \ErrA(n, T) &:= n \left( \sqrt{n} \max_{i, y \in \domfi}\norm{ A_i y
    } + 1\right) h_0 \exp\left(-\frac{T \FWLinearRateConstant(\AK, \frac{w^K}{n})}{8 \diamAk^2}\right),
 \end{align*}
 where $h_0$, $\FWLinearRateConstant(\AK, \frac{w^K}{n})$ and $\diamAk$ are defined in Lemma~\ref{lem:approximate-carath-bound-specified}.

 \begin{lem}
 \label{lem:reconstruction-technical-lemma-approx-carat}
    Let $\vstar$ be the optimal value of~\eqref{eq:dual} and~\eqref{eq:bi-dual}. Assume that we run~\Cref{alg:FW1} for $K$ iterations, followed by \Cref{alg:approximate-Carath-2} for $T \geq n - 1$ iterations to trim the number of elements, outputting some $w^{K, T}$. Finally assume that we build $\bar{w}^{K, T}$ as in~\eqref{eq:wKT-definition}. Then, under the conditions of Lemma~\ref{lem:approximate-carath-bound-specified},
    \begin{align*}
        &\bar{w}^{K, T}_1 \leq v^* + \frac{2\diamC}{\sqrt{K+1}} + \Errf(n, T),\\
        &\norm{\bar{w}^{K,T}_{2:m+1} - b}_+ \leq \frac{2\diamC}{\sqrt{K+1}} + \ErrA(n, T).
    \end{align*}
\end{lem}
\begin{proof}
    Observe that
    \begin{align*}
        \bar{w}^{K, T}_1 - \vstar &= (\bar{w}^{K, T}_1 - w^K_1) + (w^K_1 - \vstar) ,\\
        \norm{\bar{w}^{K, T}_{2:m+1} - b}_+ &\leq \norm{\bar{w}^{K, T}_{2:m+1} - w^K_{2:m+1}}_+ + \norm{w^K_{2:m+1} - b}_+.
    \end{align*}
    Since $z^K$, namely the output of \Cref{alg:FW1}, consists of the first $1+m$ coordinates of $w^K$, we have in particular $w^K_1 = z^K_1$ and $w^K_{2:m+1} = z^K_{2:m+1}$. Applying  Theorem~\ref{thm:FW1-rate} then gives the bound on the second term of both equation. We can then bound $\bar{w}^{K, T}_1 - w^K_1$ and $\norm{\bar{w}^{K, T}_{2:m+1} - w^K_{2:m+1}}_+$ using Lemma~\ref{lem:approximate-carath-bound-specified}.
\end{proof}

Based on the previous lemmas, and using the decomposition~\eqref{eq:wKT-definition}, the exact same final reconstruction schemes as in \Cref{sec:final-reconstruction} can be used. The theoretical guarantees obtained on the reconstruction are very similar, with two important differences. First, the dependence of the duality gap bound on the nonconvexity of the function is no longer governed by $m+1$, but rather by $T + 1 - n$, where $T$ is the number of iterations of the conditional gradient method~\Cref{alg:approximate-Carath-2}. This parameter $T$ can be controlled, and hence one can reduce the impact of the nonconvexity by choosing $T \leq n + m$. This comes at the price of additive terms $\Errf(n, T)$ and $\ErrA(n, T)$ in the bounds, which are exponentially decreasing with $T$. We explicitly state and prove the result for convex domains in Corollary~\ref{cor:convergence-convex-domain-approx-carath}, which is the analogue of Theorem~\ref{thm:convergence-convex-domain}. The exact same holds for the analogue of Theorem~\ref{thm:convergence-sampled-solution}, Theorem~\ref{thm:convergence-noncconvex-domain1}, Theorem~\ref{thm:convergence-noncconvex-domain2}, and we skip them in the sake of conciseness.

First let us state the following assumption, which summarizes the algorithmic framework we are working with.
\begin{assumption}
    \label{ass:algorithmic-framework-approx-carath}
    Let $\vstar$ be the optimal value of~\eqref{eq:dual} and~\eqref{eq:bi-dual}. We assume that we run~\Cref{alg:FW1} for $K$ iterations, followed by \Cref{alg:approximate-Carath-2} for $T \geq n - 1$ iterations to trim the number of elements, outputting some $w^{K, T}$. We then build $\bar{w}^{K, T}$ as in~\eqref{eq:wKT-definition}, and we assume that the condition~\eqref{eq:necessary-condition-approx-carath-specified} of Lemma~\ref{lem:approximate-carath-bound-specified} holds.
\end{assumption}

\subsubsection{Convex domains}
Recall that for convex domains, a natural candidate solution, based on the decomposition of $\bar{w}^{K, T}$ given in~\eqref{eq:wKT-definition}, is $\bar{x} \in \R^d$ defined as
\begin{align}
\label{eq:reconstruction-CC-approx-carath}
    \bar{x}_i = \sum_{l \in I_i} \alpha_l y_{i}^{k_l}, \quad i=1, \dots, n.
\end{align}

\begin{cor}[Analogue of Theorem~\ref{thm:convergence-convex-domain}]
\label{cor:convergence-convex-domain-approx-carath}
    Suppose Assumption~\ref{ass:algorithmic-framework-approx-carath} holds and that $\domfi$ is convex for all $i=1, \dots, n$. Then, $\bar{x}$, as defined in~\eqref{eq:reconstruction-CC-approx-carath}, satisfies
\begin{align*}
    \sum_{i=1}^n f_i(\bar{x}_i) &\leq \vstar +  \frac{2\diamC}{\sqrt{K+1}}+  \Errf(n, T) + (T + 1 -n) \max_{i} \rho (f_i),\\
         \norm{\sum_{i=1}^n A_i \bar{x}_i - b}_+   &\leq \frac{2\diamC}{\sqrt{K+1}} + \ErrA(n, T).
\end{align*}
\end{cor}
\begin{proof}
Observe that
\begin{align}
\begin{split}
    \sum_{i=1}^n f_i(\bar{x}_i) - \vstar &=  \left(\sum_{i=1}^n f_i(\bar{x}_i) - \bar{w}^{K, T}_1 \right)+ \left(\bar{w}^{K, T}_1 - \vstar\right)\\
    \norm{\sum_{i=1}^n A_i\bar{x}_i - b}_+ &\leq  \norm{\sum_{i=1}^n A_i \bar{x}_i - \bar{w}^{K, T}_{2:m+1}}_+ + \norm{\bar{w}^{K, T}_{2:m+1} - b}_+
    \end{split}
\end{align}
We can apply Lemma~\ref{lem:reconstruction-technical-lemma-approx-carat} to bound the last term of each equation. Moreover,
    \begin{align*}
    \sum_{i=1}^n f_i(\bar{x}_i) - \bar{w}^{K, T}_1  &= \sum_{i=1}^n f_i(\bar{x}_i) - \sum_{i=1}^n \sum_{l\in I_i} \alpha_l f_i(y_{i}^{k_l}) \\
        &= \sum_{i, |I_i| > 1}  \left( f_i\left(\sum_{l \in I_i} \alpha_l y_{i}^{k_l}\right) - \sum_{l \in I_i}\alpha_l f_i(y_{i}^{k_l}) \right)\\
        &\leq \sum_{i, |I_i| > 1} \rho(f_i) \\
        &\leq (T + 1 - n) \max_{i} \rho(f_i),
    \end{align*}
    and
    \begin{align*}
         \sum_{i=1}^n A_i \bar{x}_i
         - \bar{w}^{K, T}_{2:m+1} &=  \sum_{i=1}^n A_i \sum_{l\in I_i} \alpha_l y_{i}^{k_l}
         - \sum_{i=1}^n \sum_{l \in I_i} \alpha_l A_i y_{i}^{k_l} = 0.
    \end{align*}
\end{proof}

\begin{remark}
\label{rem:MNP}
    In this section we have explored how one can use the fully-corrective Frank-Wolfe algorithm (FCFW) to efficiently compute approximate Carath\'eodory representation of the output $w^K$ of the first stage. It is important to note that other linearly convergent variants of Frank-Wolfe may be used. In particular, away-steps~\cite{lacoste2015global} Frank-Wolfe and the min-norm point (MNP) algorithm~\cite{wolfe1976finding} are natural candidates. The away-steps Frank-Wolfe variant did not yield good numerical performance, but the MNP algorithm did, so we mention it here. Like FCFW, it is a conditional gradient method relying on the same linear minimization step~\eqref{eq:fcfw-lmo}. The main difference with FCFW is that instead of optimizing over a convex hull at each iteration (as in~\eqref{eq:FCFW-convex-hull-optimization}), it optimizes over the affine hull of an active set. This is very interesting, as this comes down to solving a linear system at each iteration, instead of solving a QP for FCFW (as in~\eqref{eq:FCFW-QP}). The linear system can be kept in triangular form and efficiently updated, which makes iterations of MNP very efficient in practice (see~\cite{wolfe1976finding} for the full algorithm description and implementation details). Moreover, MNP also exhibits a linear rate of convergence~\cite{lacoste2015global}, only worse than the rate of FCFW by a factor 2 in the exponent. More precisely, Proposition~\ref{prop:approximate-carath-bound} still holds with 
    \begin{align}
        \norm{w^{K,  T} - \frac{w^K}{n}}^2 \leq \norm{w^{ K, 0} - \frac{w^K}{n}}^2 \left(1 - \frac{ \FWLinearRateConstant(\AK, w^*)}{4\diam_{\AK}^2} \right)^{T/2}.
    \end{align}
    The remaining results of this section still hold, by simply replacing $T$ by $T/2$ in condition~\eqref{eq:necessary-condition-approx-carath-specified} and in the definitions of $\Errf(n, T)$ and $\ErrA(n, T)$.

    Finally, note that it might be possible to obtain approximate Carath\'eodory representations via other methods, such as simplicial decomposition ones~\cite{holloway1974extension}, of which the MNP algorithm is a subcase.
\end{remark}

\section{Numerical results}
\label{sec:numerical-results}
In this section we explore the numerical performance of our method on two problems, a unit commitment problem and a problem of charging a fleet of electric vehicles. The code to reproduce the experiments is available at: \url{https://github.com/bpauld/NonConvexOpt}

\subsection{Unit Commitment problem}
The unit commitment (UC) problem aims at finding a scheduling of power-generating units with minimum cost, subject to a demand constraint. We first describe the problem. For an in-depth survey of different UC problems and solvers, see~\cite{padhy2004unit,mallipeddi2014unit}. We are given a network of $n$ power-generating units, and a time-horizon $N$. At each time step $t\in \{1, \dots, N\}$, unit $i$ can either be turned on or off, which is denoted by a binary variable $u^i_{t} \in \{0, 1\}$. Turning on unit $i$ has cost $c_{01}^{i}$, and turning it off has cost $c_{10}^{i}$. The power output of unit $i$ at time step $t$ is written $g_i^t \in \R$. When $u^i_t = 0$, unit $i$ is turned off and $g^i_t = 0$. When $u^i_t = 1$, the power generation $g^i_t$ lies between a minimum value $\underline{g}^{i}$ and a maximum value $\overline{g}^i$. As is standard for UC problems~\cite{fattahi2017conic}, we assume the cost of production to be a quadratic function of $g^i_t$. We assume that all units are turned off at time step $0$. The cost functions can then be written
\begin{align}
    \label{eq:function-def-UC}
    f_i(u^i, g^i) &= \sum_{t=1}^N \ell_i(u^i_t, g^i_t), \quad i=1, \dots, n,
\end{align}
where
\begin{align*}
    \ell_i(u^i_t, g^i_t) = \begin{cases}
        c_{10}u^{i}_{t-1} &\text{ if } u^i_t = 0, \\
        c_{01}^i(1 - u^i_{t-1}) + \beta_i (g^i_t)^2 + \gamma_i g^i_t + \omega_i &\text{ if } u^i_t = 1,
    \end{cases}
\end{align*}
where $u^i_0 = 0$ by assumption. The domain of the functions is
\begin{align}
      \dom (f_i) = \left\{ (u^i, g^i) \in \{0, 1\}^N \times \R^N \middle\vert \begin{array}{l}
    (u^i_t, g^i_t) = (0, 0) \text{ or }\\
    (u^i_t, g^i_t) \in (1, [\underline{g}^i, \overline{g}^i]),
  \end{array} \text{ for all $t=1, \dots, N$}\right\}.
\end{align}
Finally, the network must meet a demand $D_t \in \R$ at each time step $t$. The optimization of production costs can then be written as
\begin{align}
\label{eq:UC-prob}
    \begin{split}
    \mbox{minimize} & \ \sum_{i=1}^n f_i(u^i, g^i)\\
    \text{subject to }& \ \sum_{i=1}^n g^i_t  \geq D_t, \quad t=1, \dots, N\\
    &(u_i, g_i) \in \dom (f_i), \quad i=1, \dots, n.
    \end{split}
\end{align}
This fits exactly withing the setting of~\eqref{eq:primal} with $m = N$.

\subsubsection{Computing the conjugate}
\label{sec:UC-conjugate}
In this section we show how to compute the conjugate of the function $f_i$ defined in~\eqref{eq:function-def-UC}. For ease of notation, we drop all indices $i$ and superscripts $i$. Suppose we wish to compute $f^*(v, p)$ for $v\in\R^N$ and $p\in \R^N$. Let us define
\begin{align*}
    \Delta_{k, 0} &= \max_{\substack{ u_1, \dots, u_k,\\ g_1, \dots, g_k, \\ u_k = 0}} \left\{ \sum_{t=1}^k \left( u_t v_t + g_t p_t \right) - \sum_{t=1}^k \ell(u_t, g_t) \right\}, \\
    \Delta_{k, 1} &= \max_{\substack{ u_1, \dots, u_k,\\ g_1, \dots, g_k, \\ u_k = 1}} \left\{ \sum_{t=1}^k \left( u_t v_t + g_t p_t \right) - \sum_{t=1}^k \ell(u_t, g_t) \right\}.
\end{align*}
One then notices that $f^*(v, p) = \max \left( \Delta_{N, 0}, \Delta_{N, 1}\right)$. Moreover, we have
\begin{align*}
    \Delta_{1, 0} &= 0, \quad
    \Delta_{1, 1} = v_1 - c_{01} + \max_{\underline{g} \leq g \leq \overline{g}} \left\{  p_1 g - \beta g^2 - \gamma g - \omega\right\},
\end{align*}
and for $k > 1$,
\begin{align*}
    \Delta_{k, 0} &= \max\left( \Delta_{k-1, 0}, \Delta_{k-1, 1} - c_{10}\right),\\
    \Delta_{k, 1} &= \max\left( \Delta_{k-1, 0} - c_{01}, \Delta_{k-1, 1}\right) + v_k + \max_{\underline{g} \leq g \leq \overline{g}} \left\{  p_k g - \beta g^2 - \gamma g - \omega\right\}.
\end{align*}
The objective in the maximization steps is a one-dimensional quadratic over a bounded closed interval, so obtaining the optimal value is straightforward. Thus the value $f^*(v, p)$ can be obtained recursively, and the complexity is $O(N)$. Moreover, the gradient can be obtained by keeping track of the arguments yielding the maximum in the recursion. 

\ifthenelse{\boolean{longversion}}{
\subsubsection{Linear minimization over the domain}
When $\alpha_k = 0$ in~\eqref{eq:FW1-LMO-2}, one needs to compute a linear minimization over $\dom f_i$. We show how to do so now. As in the computation of the conjugate above, we drop all indices $i$. Given $(v, p) \in \R^N \times \R^N$, one must solve
\begin{align*}
    \min_{(u, g) \in \dom f} \sum_{t=1}^N (u_t v_t + g_t p_t). 
\end{align*}
The structure of $\dom f$ tells us that the problem is separable, and the subproblems can be rewritten as
\begin{align*}
    \min_{u_t, g_t} \quad & u_t v_t + g_t p_t \\
    \text{subject to } & (u_t, g_t) = (0, 0) \text{ or } (u_t, g_t) \in (1, [\underline{g}, \overline{g}]).
\end{align*}
Since the objective is linear, the minimum is necessarily attained at an extreme point, i.e. at one of $(0, 0)$, $(1, \underline{g})$ or $(1, \overline{g})$.

}{}
\subsubsection{Implementation details}
We generate random instances of the UC problem. Details on the data generation process can be found in \Cref{sec:experimental-details}. To compute the optimal dual value $\vstar$, we solve the dual~\eqref{eq:dual} using projected gradient descent with a decreasing stepsize, which we stop when no more significant progress on the dual objective value is achieved. We run \Cref{alg:FW1} for $K=10^4$ iterations. We then either compute an exact Carath\'eodory representation (\Cref{alg:Constructive-caratheodory}), an approximate Carath\'eodory representation with FCFW (\Cref{alg:approximate-Carath-2}), or an approximate Carath\'eodory representation with MNP (see \Cref{rem:MNP} and~\cite{wolfe1976finding}). For \Cref{alg:approximate-Carath-2}, we solve the QP using ProxQP~\cite{bambade2023proxqp}.

As we discussed in \Cref{rem:comments-on-assumption}, Assumption~\ref{ass:nonconvex-domain1} holds for the unit commitment problem, namely for a point in the convex hull of the domain there always exist a a point in the domain which increases the overall electricity output. We write this solution $x^F$.

Finally, the bounds provided in \Cref{sec:final-reconstruction} give solutions that are feasible up to an arbitrarily small term. Early experiments showed that solving a slightly perturbed version of the problem yielded fully feasible primal solutions. In the spirit of the approach described in paragraph~\ref{sec:final-reconstruction-perturbed}, we thus apply our method to problem~\eqref{eq:UC-prob} perturbed by a vector $\perturb\in \R^N$. Although the discussion in~\ref{sec:final-reconstruction-perturbed} suggests setting $\perturb$ to  $(N+1)\max_i \overline{g}^i$, this turned out to be unnecessarily large in our experiments. Instead, we set $\perturb = \zeta \max_i \overline{g}^i$, with initial value $\zeta = 1$. If the solution satisfies the demand, we stop there. Otherwise, we increase $\zeta$ by 1 and solve the new perturbed problem. We repeat this until the obtained solution meets the demand.

\subsubsection{Results}
\Cref{table:UC-results} presents the results for 10 randomly generated instances with $n=50$ units over $N=10$ time steps. While the theoretical duality gap is $(N+1) \max_i \gamma(f_i)$, all of our experiments yield solutions whose duality gap is smaller than $\max_i \gamma(f_i)$, a considerable improvement. Finally, the quality of the solutions is not significantly impacted by the Carath\'eodory approach (exact or approximate).

%\begin{table}
%\centering
%\caption{Results for different random instances of the unit commitment problem with $n=50$ and $m=10$. $x^{F}$ corresponds to the reconstructed solution whose feasibility is better than the obtained convex combination, as in \Cref{eq:reconstruction-nonconvex1}. Exact Carath\'eodory corresponds to the case where the trimming is performed using \Cref{alg:Constructive-caratheodory}, while approximate Carath\'edodory representations are computed using either FCFW (\Cref{alg:approximate-Carath-2}) or MNP (as mentionned in \Cref{rem:MNP}). We see that the recovered solutions have duality gap much below the theoretical value of $(m+1) \max_i \gamma(f_i)$.}\label{table:UC-results}
%\csvreader[tabular = c c c c c c c c, 
%table head = \hline \hline & & \multicolumn{2}{c}{Exact Carath.} & \multicolumn{2}{c}{App. Carath. FCFW} & \multicolumn{2}{c}{App. Carath. MNP} \\
%\cmidrule(lr){3-4} \cmidrule(l){5-6}  \cmidrule(l){7-8} 
%$\vstar$  & $\max_i \gamma(f_i)$ & $\zeta$ & $f(x^F)$ & $\zeta$ & $f(x^F)$ & $\zeta$ & $f(x^F)$ \\\hline,
%late after last line=\\\hline
%]{results/results_n_50_N_10.csv}{}{%
%\ifnum\thecsvrow=11 \hline\fi
%\csvcoli & \csvcolii &\csvcoliii &\csvcolvii & \csvcoliv & \csvcolxi &\csvcolv & \csvcolxv}
%\end{table}

%Entering data manually because MathProg does not allow .csv as supplementary latex files
\begin{table}
\centering
\caption{Results for different random instances of the unit commitment problem with $n=50$ and $m=10$. $x^{F}$ corresponds to the reconstructed solution whose feasibility is better than the obtained convex combination, as in \Cref{eq:reconstruction-nonconvex1}. Exact Carath\'eodory corresponds to the case where the trimming is performed using \Cref{alg:Constructive-caratheodory}, while approximate Carath\'edodory representations are computed using either FCFW (\Cref{alg:approximate-Carath-2}) or MNP (as mentionned in \Cref{rem:MNP}). We see that the recovered solutions have duality gap much below the theoretical value of $(m+1) \max_i \gamma(f_i)$.}\label{table:UC-results}
\begin{tabular}{c c c c c c c c}
\hline \hline & & \multicolumn{2}{c}{Exact Carath.} & \multicolumn{2}{c}{App. Carath. FCFW} & \multicolumn{2}{c}{App. Carath. MNP} \\
\cmidrule(lr){3-4} \cmidrule(l){5-6}  \cmidrule(l){7-8}  
$\vstar$  & $\max_i \gamma(f_i)$ & $\zeta$ & $f(x^F)$ & $\zeta$ & $f(x^F)$ & $\zeta$ & $f(x^F)$ \\\hline
87 772 &  28 395 & 1 & 96 509 & 2 & 99 924 & 1 & 95 798 \\
62 412 &  23 881 & 2 & 78 882& 2 & 74 683 & 2 & 78 588\\
62 259 &  25 079 & 1 & 70 102 & 2 & 74 318 & 1 & 71 302\\
85 452 &  23 231 & 1 & 95 037 & 3 & 106 412 & 1 & 94 137\\
113 128 & 25 301 & 1 & 123 040 & 2 & 130 858 & 1 & 123 044\\
90 860  & 23 802 & 2 & 109 873 & 3 & 119 599 & 2 & 109 673\\
78 967  & 21 190 & 1 & 87 239 & 3 & 98 516 & 1 & 88 661\\
90 168  & 23 364 & 1 & 98 793 & 2 & 105 536 & 1 & 98 805\\
108 709 & 27 126 & 1 & 118 484 & 2 & 129 752 & 1 & 119 376\\
44 832 &  22 921 & 1 & 52 141 & 2 & 53 397 & 1 & 51 271\\\hline
\end{tabular}
\end{table}

We now analyze the running time of our method. To do so, we generate random instances of the UC problem with $N = 20$ and different values of $n$. We then run \Cref{alg:FW1} for $K=10^4$. 

\Cref{fig:UC-runtime} (left) plots the average runtime of \Cref{alg:FW1} (over 5 runs). We see that there is a linear relationship between the number of units $n$ and the running time. This is expected, as the bottleneck of \Cref{alg:FW1} is the computation of the linear minimization oracle at each point $i \in \{1, \dots, n\}$. Although the running time may seem high at first, there are important things to consider here. First, the computation of the conjugate described in \Cref{sec:UC-conjugate} is an iterative approach, implemented with for loops in our code, which are notoriously slow in Python. One could thus highly benefit from an implementation in another programming language. Moreover, we implemented our method on a single machine but the structure of the linear minimization step~\eqref{eq:FW1-LMO} make it fully parallelizable, so one could further take advantage of having several machines solving~\eqref{eq:FW1-LMO-2} in parallel.

\Cref{fig:UC-runtime} (middle and right) plots the average running time of the different Carath\'eodory algorithms. We see on the middle plot that the running time of the exact algorithm (\Cref{alg:Constructive-caratheodory}) becomes prohibitively large even for moderate $n$, and hence we did not run it for values over $n=200$. The right plot shows that the running time of the MNP algorithm is better than the one of FCFW. This is expected, as each iteration of of FCFW solves a QP whose size increases with the number of iterations, and one must run at least $n$ iterations in order to construct a final solution. This plot and the result from \Cref{table:UC-results} lead us to conclude that the best method to obtain Carath\'eodory representations is the MNP algorithm.

\begin{figure}[!ht]
\centering
 \includegraphics[scale=0.4]{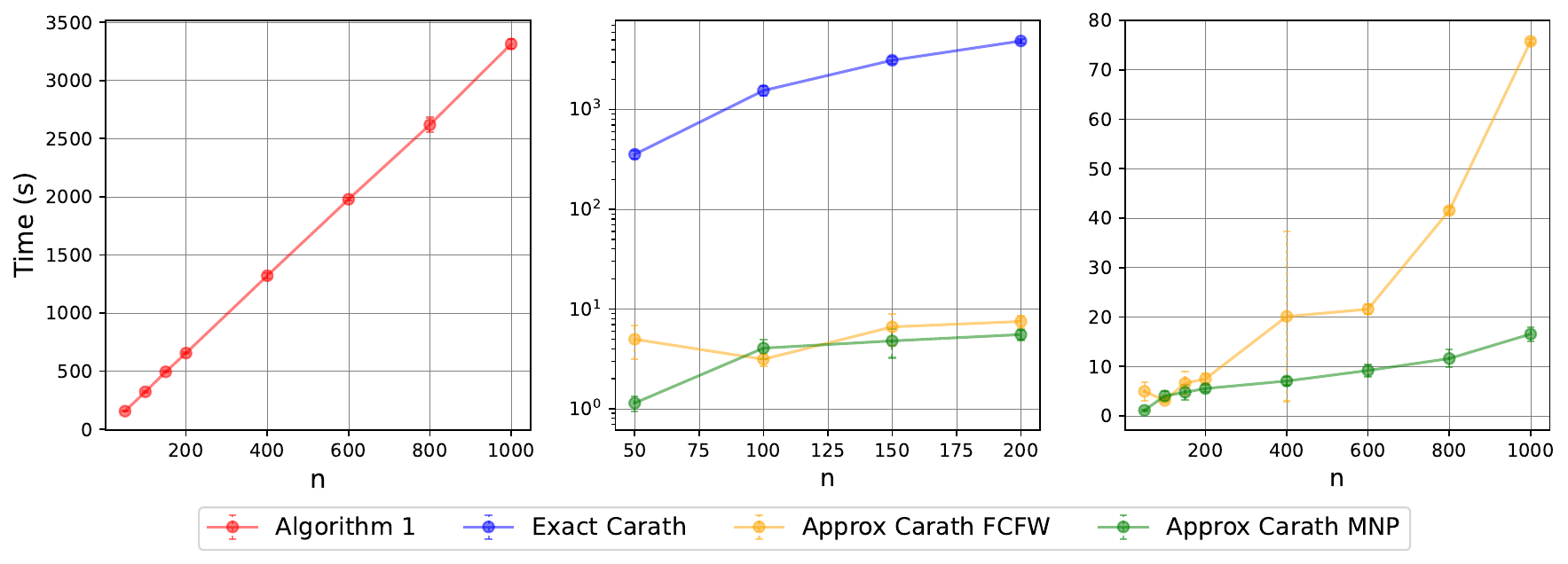}
\caption{Running time of the different algorithms with respect to $n$, averaged over 5 runs. (Left) Running time of \Cref{alg:FW1}. (Middle) Running time of all three Carath\'eodory approaches. We see that the exact approach (\Cref{alg:Constructive-caratheodory}) has long running time even for small values of $n$. (Right) Running time of the approximate Carath\'eodory approaches for larger values of $n$. We see that as $n$ grows, the MNP algorithm becomes more efficient than the FCFW algorithm.}
  \label{fig:UC-runtime}
\end{figure}

%\begin{table}
%\centering
%\caption{Results for different random instances of the unit commitment problem with $n=50$ and $m=10$. $x^{\max}$ corresponds to the reconstructed solution built by taking the maximum coefficient in every convex combination as in \Cref{eq:reconstruction-max}, while $x^{F}$ corresponds to the reconstructed solution whose feasibility is better than the obtained convex combination, as in \Cref{eq:reconstruction-nonconvex1}. Exact Carath\'eodory corresponds to the case where the trimming is performed using \Cref{alg:Constructive-caratheodory}, while approximate corresponds to the trimming performed with \Cref{alg:approximate-Carath}. We see that the recovered solutions have duality gap much below the theoretical value of $(m+1) \max_i \gamma(f_i)$.}\label{table:UC-results}
%\csvreader[tabular = c c c c c c c, 
%table head = \hline \hline & & & \multicolumn{2}{c}{Exact Carath\'eodory} & \multicolumn{2}{c}{Approx. Carath\'eodory} \\
%\cmidrule(lr){4-5} \cmidrule(l){6-7} 
%$\vstar$ & $\zeta$  & $\max_i \gamma(f_i)$ & $f(x^{\max})$ & $f(x^F)$ & $f(x^{\max})$ & $f(x^F)$ \\\hline,
%late after last line=\\\hline
%]{results/results_final.csv}{}{%
%\ifnum\thecsvrow=11 \hline\fi
%\csvcoli & \csvcoliv & \csvcoliii &\csvcolv &\csvcolvi & \csvcolix & \csvcolx}
%\end{table}

\subsection{Charging of Plug-in Electric Vehicles}

Inspired by~\cite{vujanic2016decomposition}, we now consider the problem of efficiently charging a fleet of Plug-in Electric Vehicles (PEVs). Let us describe the problem setting. We are given a fleet of $n$ vehicles, which must be charged over $N$ intervals of time, each of length $\Delta$. The charge level of vehicle $i$ at time step $k$ is denoted as $e_i^{k}$. Whether or not vehicle $i$ is charging at time step $k$ is given by a variable $u^i_k \in \{0, 1\}$. The charge level at time step $k+1$ is then given by
\begin{align*}
    e^i_{k+1} = e^i_k + P_i \Delta \xi_i u^i_k,
\end{align*}
where $P_i$ is the charging rate at station $i$ and $\xi_i$ is the charging conversion efficiency. The initial charge level is given by $E_{\text{init}}^i$, and the final charge must be at least $E_{\text{ref}}^i$ for all $i$. Moreover, the level of charging of vehicle $i$ cannot exceed a maximum value $E_{\text{max}}^i$. Finally, the system cannot impose too much stress on the overall network. This translates into the constraint
\begin{align*}
    \sum_{i=1}^n P_i u^i_k \leq P^{\text{max}}_k, \ k=1, \dots, N,
\end{align*}
for some given $P^{\text{max}} \in \R^N$. If $C^u \in \R^N$ denotes the price vector for electricity consumption at each time step, we can write the problem of optimizing costs as
\begin{align*}
    \mbox{minimize }  & \ \sum_{i=1}^n f_i(u^i)\\
    \text{subject to } & \ \sum_{i=1}^n P_i u^i_k \leq P^{\text{max}}_k, \quad k=1, \dots, N,\\
    &u^i \in \dom (f_i), \quad i=1, \dots, n,
    \end{align*}
where
\begin{align*}
    f_i(u^i) = \sum_{k=1}^{N} P_i C^u_k u^i_k
\end{align*}
and
\begin{align*}
    \dom f_i = \left\{ u \in \{0, 1\}^N\ \middle\vert \begin{array}{l}
    E_{\text{init}}^i + \sum_{k=1}^{N}P_i \Delta \xi_i u_k \geq E_{\text{ref}}^i, \\
    E_{\text{init}}^i + \sum_{k=1}^{j}P_i \Delta \xi_i u_k \leq E_{\text{max}}^i, \ j=1, \dots, N
  \end{array}\right\}
\end{align*}
The computation of the Fenchel conjugate of $f_i$ simply consists of a greedy strategy~\cite{vujanic2016decomposition}, so we omit it here.

\subsubsection{Data generation} We set $n=500$ and $N=24$ and refer the reader to~\cite[Appendix B]{vujanic2016decomposition} for the full data generation procedure.

\subsubsection{Implementation details} We implement our method with the MNP algorithm to trim the number of elements in the output of \Cref{alg:FW1}. We compare it with the randomized algorithm from~\cite{udell2016bounding}, the only other method we are aware of achieving similar duality gap bounds as our method. Since the domain of the function is not convex, we resort to the perturbation approach described in \ref{sec:final-reconstruction-perturbed} and hence apply our approach to the perturbed problem~\eqref{eq:bi-dual-perturbed}. Setting $\perturb = N \max_i P_i$ ensures that the assumptions of \ref{sec:final-reconstruction-perturbed} are satisfied~\cite{vujanic2016decomposition}. We build the final solution as in~\eqref{eq:reconstruction-max}. We also do this for the randomized algorithm from~\cite{udell2016bounding}.

\subsubsection{Results} We explore how the feasibility and the function values evolve as a function of $K$, the number of iterations of \Cref{alg:FW1}. We write $\bar{x}_K$ as the final reconstructed solution. 

\Cref{fig:PEV-unfeas} plots the average unfeasibility (over 10 runs) of $\bar{x}_K$ in the perturbed problem~\eqref{eq:bi-dual-perturbed} (left) and in the original primal problem~\eqref{eq:primal} (right). First we see that for the original primal problem, a small value of $K = 10^3$ was sufficient for all solutions to be fully primal feasible. The left plot shows that infeasibility in the perturbed primal problem behaves similarly to the (scaled) $O(1/\sqrt{K})$ curve in red, as is expected by our theoretical bounds. We hypothesize that the slightly worse than $O(1/\sqrt{K})$ behavior is due to the additive exponential error term $\ErrA(n, T)$ which appears in the bounds when using approximate Carath\'eodory algorithms (c.f. Lemma~\ref{lem:reconstruction-technical-lemma-approx-carat}).

\begin{figure}[!ht]
\centering
 \includegraphics[scale=0.4]{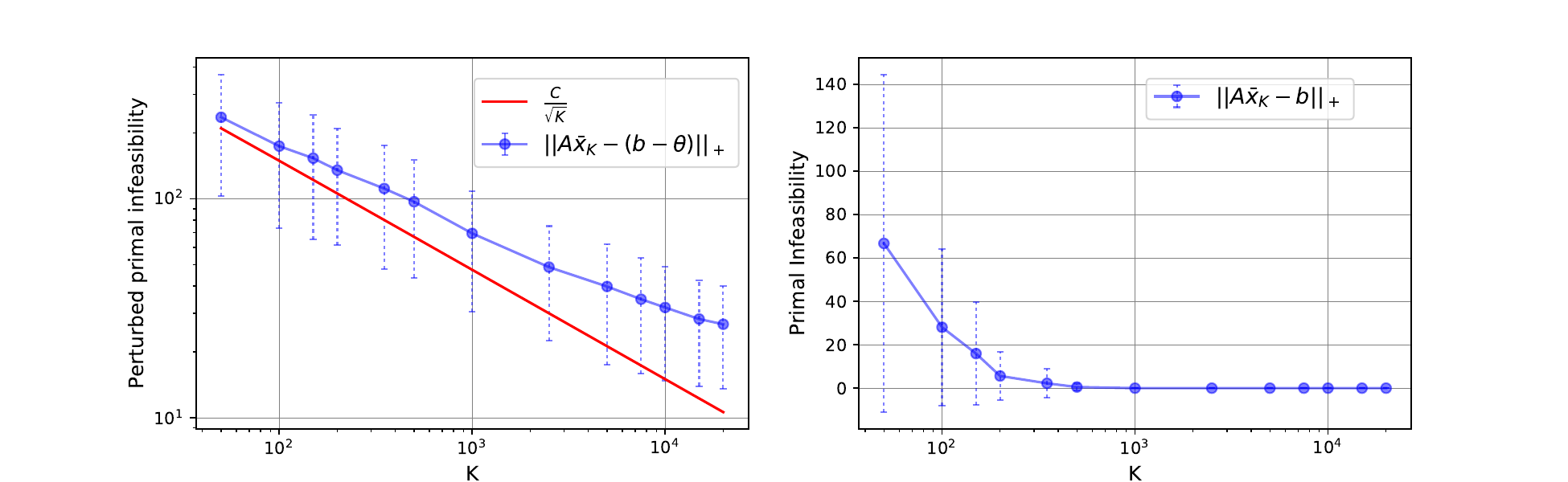}
\caption{Infeasibility of the final reconstructed solution $\bar{x}_K$ with respect to the number of iterations $K$ of \Cref{alg:FW1}, averaged over 10 runs. (Left) Infeasibility of the solution in the perturbed primal problem. In red is just the (scaled) curve $1/\sqrt{K}$ for reference. We see that the infeasibility seems to be decreasing close to the $O(1/\sqrt{K})$ rate as predicted by our bounds, and the slightly worse behavior is most likely due to the additive exponential error term $\ErrA(n, T)$. (Right) Infeasibility of the solution in the original primal problem. We see that over 10 runs, it was always sufficient to run for only $K=1000$ iterations to obtain a fully feasible solution.}
  \label{fig:PEV-unfeas}
\end{figure}

\Cref{fig:PEV-function-values} plots the evolution of the function values for two specific instances of the problem. In orange is the function value for the output of the randomized algorithm from~\cite{udell2016bounding}, while the curve in blue is the output of our method for different $K$. Finally, the vertical dashed red line represents the first value of $K$ for which the original primal problem is fully primal feasible for $\bar{x}_K$. At first sight, it might seem strange that the value of the function increases with $K$, but this is actually expected from the left plot in \Cref{fig:PEV-unfeas}. Indeed, for small values of $K$, the perturbed primal problem (which is the one we numerically solve) is more infeasible than for large values of $K$, and thus smaller objective function values can be obtained. One is thus tempted to take small values of $K$, but not too small so that the original primal problem is fully feasible. In that sense, the `optimal' value of $K$ is the one given by the red vertical line, i.e. the smallest value of $K$ for which the original primal problem is fully feasible.

For large-scale problems, we saw in the experiments on the unit commitment problem that the main bottleneck is the running time of the first algorithm \Cref{alg:FW1}. This suggests the following strategy for the method. While running \Cref{alg:FW1}, one should periodically stop the algorithm at some iteration $K$ with iterate $z_K$, compute the final solution $\bar{x}_K$ using the approximate Carath\'eodory algorithm, and check if that solution is feasible in the original primal problem. If it is, then one can stop the whole procedure. Otherwise, one can go back to \Cref{alg:FW1} and resume from $z_K$, and do another check later on. Since the running time of approximate Carath\'eodory algorithms is much lower than \Cref{alg:FW1}, this will not significantly impact the running time of the overall procedure.

\begin{figure}[!ht]
\centering
 \includegraphics[scale=0.4]{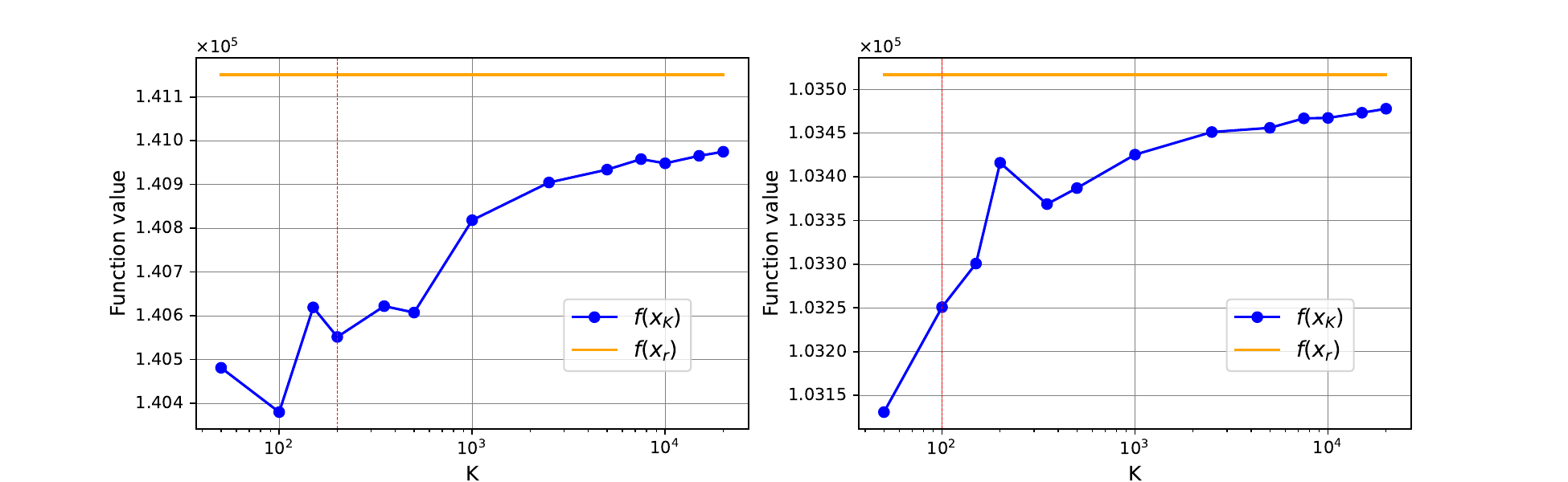}
\caption{ Typical behavior of our method on two specific instances of the PEVs problem. The curve in blue represents the value of the function at the final reconstructed solution $\bar{x}_K$ with respect to the number of iterations $K$ of \Cref{alg:FW1}. In orange is the value obtained by the randomized algorithm from~\cite{udell2016bounding}. The vertical dashed red line indicates the first value of $K$ for which $\bar{x}_K$ is primal feasible, i.e. $A \bar{x}_K - b \leq 0$.}
  \label{fig:PEV-function-values}
\end{figure}

\appendix

\section{Technical lemmas}
\label{sec:helper-lemmas}

\ExtremePointsMatching*

To prove the above lemma, we prove the following, stronger result. Recall that a function $f$ is 1-coercive if 
\begin{align*}
    \lim_{\norm{x} \rightarrow \infty} \frac{f(x)}{ \norm{x}} = \infty.
\end{align*}
In particular, a function with a compact domain is 1-coercive.
\begin{lem}
    \label{lem:extreme-points}
    Suppose $f$ is proper, closed and 1-coercive. Then
    \begin{align*}
        \left\{ (Ax, f^{**}(x)) \mid x \in \dom f^{**} \right\} \subset \conv \left\{ (Ax, f(x)) \mid x \in \dom f \right\}.
        \end{align*}
    Moreover, if for some $\tilde{x}$,
    \begin{align*}
        (A\tilde{x}, f^{**}(\tilde{x})) \in \ext \left\{ (Ax, f^{**}(x) ) \mid x \in \dom f^{**} \right\},
    \end{align*} 
    then $\tilde{x} \in \dom f$ and $f^{**}(\tilde{x}) = f(\tilde{x})$.
\end{lem}
\begin{proof}
     Let $z = (A \tilde{x}, f^{**}(\tilde{x})) \in \left\{ (Ax, f^{**}(x) ) \mid x \in \dom f^{**} \right\}$. 
     
     We have $(\tilde{x}, f^{**}(\tilde{x})) \in \epi f^{**} = \conv \epi f$ \cite[Theorem X.1.5.3]{hiriart1996convex}. Therefore, for some $K \in \N$, $\alpha \in \Delta_K$ and some $x_j \in \dom f$,
    \begin{align}
    \label{eq:helper-lemma1-convex-hull}
        (\tilde{x}, f^{**}(\tilde{x})) = \sum_{j=1}^K \alpha_i (x_j, t_j) 
        \ \text{ with }\ f(x_j) \leq t_j
        \end{align}
        and thus
        \begin{align*}
        z = (A\tilde{x}, f^{**}(\tilde{x})) = \sum_{j=1}^K \alpha_j (Ax_j, t_j)\ \text{ with }\ f(x_j) \leq t_j.
    \end{align*}
    In particular, we have
    \begin{align}
    \label{eq:helper-lemma1-nested-inequalities}
        f^{**}(\tilde{x}) = f^{**}(\sum_{j=1}^K \alpha_j x_j)
        \leq \sum_{j=1}^K \alpha_j f^{**}(x_j) 
        \leq \sum_{j=1}^K \alpha_j f(x_j) 
        \leq \sum_{j=1}^K \alpha_j t_j
        = f^{**}(\tilde{x}).
    \end{align}
    All the above inequalities are therefore equalities, and in particular,
    \begin{align}
    \label{eq:helper-lemma1-equality3}
        (A \tilde{x}, f^{**}(\tilde{x})) = \sum_{j=1}^K \alpha_j (Ax_j, f(x_j))
    \end{align}
    so that $z \in \conv \left\{ (Ax, f(x)) \mid x \in \dom f \right\}$ which proves the first statement. Up to regrouping, we can assume that $(A x_{j_1}, f^{**}(x_{j_1})) \not = (A x_{j_2}, f^{**}(x_{j_2}))$ for all $j_1\not = j_2$ in the sum. 
    
    Now suppose that $z$ is an extreme point of the set
    $\left\{ (Ax, f^{**}(x)) \mid x \in \dom f^{**} \right\}$. Recall that $z = (A \tilde{x}, f^{**}(\tilde{x}))$ and observe that~\eqref{eq:helper-lemma1-convex-hull} and~\eqref{eq:helper-lemma1-nested-inequalities} tell us that
    \begin{align*}
        z = (A \tilde{x}, f^{**}(\tilde{x})) = \sum_{j=1}^K \alpha_j (Ax_j, f^{**}(x_j)).
    \end{align*}
    Since $z$ is an extreme point, the above convex combination must be trivial, i.e. we must have $\alpha_j = 0$ for all but one index. Suppose without loss of generality that $\alpha_1 =1 $ and that $\alpha_j = 0$ for $j > 1$. We then have, by~\eqref{eq:helper-lemma1-convex-hull}, $\tilde{x} = x_1 \in \dom f$. We also have, by~\eqref{eq:helper-lemma1-equality3},  $f^{**}(\tilde{x}) = f(x_1) = f(\tilde{x})$.
\end{proof}

\begin{lem}
\label{lem:equality-epigraph}
    Suppose $f$ is proper, closed, and 1-coercive. Then
    \begin{align*}
        \left\{ \begin{pmatrix}f^{**}(x) \\ A x \end{pmatrix} \mid x \in \dom f^{**} \right\} + \R^{m+1}_+ = \conv \left\{ \begin{pmatrix}f(x) \\ A x \end{pmatrix} \mid x \in \dom f \right\} + \R^{m+1}_+
    \end{align*}
\end{lem}
\begin{proof}
Let $z \in \left\{ \begin{pmatrix}f^{**}(x) \\ A x \end{pmatrix} \mid x \in \dom f^{**} \right\} + \R^{m+1}_+$ so that 
\begin{align*}
z = \begin{pmatrix}
    f^{**}(\tilde{x}) \\ A \tilde{x}
\end{pmatrix} + \begin{pmatrix}
    \delta_1 \\ \delta_2
\end{pmatrix},
\end{align*}
with $\delta_1 \in \R_+, \delta_2 \in \R_m^+$. It is then clear that
\begin{align*}
    \begin{pmatrix}
        \tilde{x} \\ z
    \end{pmatrix} \in \left\{ (x, r_{0}, r) \mid f^{**}(x) \leq r_{0}, A x \leq r \right\}.
\end{align*}
Using~\cite[Corollary A.6]{lemarechal2001geometric}, this implies actually that
\begin{align*}
     \begin{pmatrix}
        \tilde{x} \\ z
    \end{pmatrix} \in &\cl \conv \left\{ (x, r_{0}, r) \mid f(x) \leq r_{0}, A x \leq r \right\} \\
    &= \conv \left\{ (x, r_{0}, r) \mid f(x) \leq r_{0}, A x \leq r \right\}
\end{align*}
where the equality is due to Lemma~\ref{lem:conv-is-closed}. In particular, we have
\begin{align*}
    z &\in \conv \left\{ (r_{0}, r) \mid f(x) \leq r_{0}, A x \leq r \text{ for some } x\right\}\\
    & = \conv \left\{ 
    \left\{ \begin{pmatrix}
        f(x) \\ A x
    \end{pmatrix} \mid x \in \dom f \right\} + \R^{m+1}_+ \right\} \\
    &= \conv 
    \left\{ \begin{pmatrix}
        f(x) \\ A x
    \end{pmatrix} \mid x \in \dom f \right\} + \R^{m+1}_+.
\end{align*}
Now suppose \begin{align*}
z = \begin{pmatrix}
    z_1 \\ z_2
\end{pmatrix} &\in \conv 
    \left\{ \begin{pmatrix}
        f(x) \\ A x
    \end{pmatrix} \mid x \in \dom f \right\} + \R^{m+1}_+ \\
    &= \conv \left\{ (r_{0}, r) \mid f(x) \leq r_{0}, A x \leq r \text{ for some } x\right\}.
    \end{align*}
    Then there exists $\alpha \geq 0$ such that $1^\top \alpha = 1$ and
    \begin{align*}
        (z_1, z_2) = \sum_{k} \alpha_k (f(x_k), Ax_k) + (\delta_1, \delta_2)
    \end{align*}
    for some $\delta_1 \in \R_+, \delta_2 \in \R_+^m$ and some $x_k \in \dom f$. Let us set $y = \sum_k \alpha_k x_k$. Then
    \begin{align*}
        \begin{pmatrix}
            y \\ z
        \end{pmatrix} &\in \conv \left\{ (x, r_{0}, r) \mid f(x) \leq r_{0}, A x \leq r \right\} \\
        &= \cl \conv \left\{ (x, r_{0}, r) \mid f(x) \leq r_{0}, A x \leq r \right\} \\
        &= \left\{ (x, r_{0}, r \mid f^{**}(x) \leq r_{0}, A x \leq r \right\}
    \end{align*}
    where the first equality is by Lemma~\ref{lem:conv-is-closed} and the second is again from~\cite[Corollary A.6]{lemarechal2001geometric}. This ends the proof.
\end{proof}

\begin{lem}
    \label{lem:conv-is-closed}
    Suppose $f$ is proper, closed, and 1-coercive. Then
    \begin{align*}
        \conv \left\{ (x, r_{0}, r) \mid f(x) \leq r_{0}, A x \leq r \right\}
    \end{align*}
    is closed.
\end{lem}
\begin{proof}
    Suppose $(x_k, r_{0,k}, r_k) \in  \conv \left\{ (x, r_{0}, r) \mid f(x) \leq r_{0}, A x \leq r \right\}$ converges to  $(\tilde{x}, \tilde{r}_0, \tilde{r})$. We want to show that $(\tilde{x}, \tilde{r}_0, \tilde{r}) \in \conv \left\{ (x, r_{0}, r) \mid f(x) \leq r_{0}, A x \leq r \right\} $. For all $k$, there exists $\alpha_k \geq 0$ such that $1^\top \alpha_k = 1$ and
    \begin{align*}
        (x_k, r_{0,k}, r_k) = \sum_j \alpha_{kj} (x_{kj}, r_{0,kj}, r_{kj})
    \end{align*}
    with $(x_{kj}, r_{0,kj}) \in \epi f$ and $Ax_{kj} \leq r_{kj}$. In particular, this implies that $(x_k, r_{0,k}) \in \conv \epi f$ for all $k$. Since $\conv \epi f$ is closed~\cite[Lemma X.1.5.3]{hiriart1996convex}, this implies that $(\tilde{x}, \tilde{r}_0) \in \conv \epi f$. Therefore there exists $(x_p, r_{0,p}) \in \epi f$ such that
    \begin{align*}
        (\tilde{x}, \tilde{r}_0) = \sum_{p} \beta_p (x_p, r_{0,p})
    \end{align*}
    with $\beta \geq 0$, $1^\top \beta = 1$. Moreover, we have
    \begin{align*}
        Ax_k =  \sum_{k} \alpha_{kj} A x_{kj}  \leq \sum_{k} \alpha_{kj} r_{kj} = r_k.
    \end{align*}
    Taking limits, this implies that $A\tilde{x} \leq \tilde{r}$. Define $r_p = A x_p + (\tilde{r} - A\tilde{x})$. Then $Ax_p \leq r_p$ and $\sum_{p} \beta_p r_p = A \sum_p \beta_p x_p + (\tilde{r} - A\tilde{x}) = \tilde{r}$. Thus $$(\tilde{x}, \tilde{r}_0, \tilde{r}) \in \conv \left\{ (x, r_{0}, r) \mid f(x) \leq r_{0}, A x \leq r \right\}$$.
\end{proof}

\section{Experimental details}
\label{sec:experimental-details}
In this section, we give details on the data generation of \Cref{sec:numerical-results}. We write $U(c, d)$ as the uniform distribution in the interval $[c, d]$. At each time step $t$, the demand $D_t$ is generated according to $D_t \sim U(100, 300)$. For each unit $i$, we then set the maximum and minimum generation parameters $\overline{g}^i$ and $\underline{g}^i$ as
\begin{align*}
    p^i &\sim  \frac{1}{n} \times U(100, 300)\\
    \overline{g}^i &= 2p^i\\
    \underline{g}^i &= 0.5p^i.
\end{align*}
We then set, for all units $i$,
\begin{align*}
    \beta_i &\sim U(1, 20), \quad \gamma_i \sim U(3, 5), \quad \omega_i \sim U(30, 50),\\
    c_{01}^i &=  \frac{\sum_{j} \beta_j (p^j)^2 + \gamma_j p^j + \omega_j}{2n} , \quad c_{10}^i = \frac{c_{01}^i}{4}.
\end{align*}
We found that choosing the parameters this way allows for interesting settings where the choice of when to turn on/off units of production, and of how much power should be generated, is non-trivial.

\section*{Funding}
%If you'd like to thank anyone, place your comments here
%and remove the percent signs.
This work was funded in part by the French government
under management of Agence Nationale de la recherche as part of the “Investissements d’avenir” program, reference ANR-19-P3IA-0001 (PRAIRIE 3IA Institute) and
a Google focused award.

% Authors must disclose all relationships or interests that 
% could have direct or potential influence or impart bias on 
% the work: 
%
 \section*{Conflict of interest}
 The authors declare that they have no conflict of interest.

% BibTeX users please use one of
%\bibliographystyle{spbasic}      % basic style, author-year citations
\bibliographystyle{spmpsci}      % mathematics and physical sciences
\bibliography{references}   % name your BibTeX data base

\end{document}